\numberwithin{equation}{section}
\newcounter{statements}
\newcounter{conjectures}
\newtheorem{theorem}[statements]{Theorem}
\newtheorem{lemma}[statements]{Lemma}
\newtheorem{corollary}[statements]{Corollary}
\newtheorem{proposition}[statements]{Proposition}
\theoremstyle{definition}
\newtheorem{example}[statements]{Example}
\newtheorem{definition}[statements]{Definition}
\theoremstyle{remark}
\newtheorem{remark}[statements]{Remark}
\newcommand{\QQ}{{\mathbb Q}}
\newcommand{\ZZ}{{\mathbb Z}}
\newcommand{\bbA}{{\mathbb A}}
\newcommand{\bbC}{{\mathbb C}}
\newcommand{\bbZ}{{\mathbb Z}}
\newcommand{\bbQ}{{\mathbb Q}}
\newcommand{\Hom}{\mathrm{Hom}\,}
\newcommand{\cO}{\mathcal{O}}
\newcommand{\Ext}{\mathrm{Ext}}
\newcommand{\Sym}{\mathrm{Sym}}
\newcommand{\cH}{\mathcal{H}}
\newcommand{\cM}{\mathcal{M}}
\newcommand{\Hscr}{\mathcal{H}}
\newcommand{\cR}{\mathcal{R}}
\newcommand{\Rscr}{\mathcal{R}}
\newcommand{\Mscr}{\mathcal{M}}
\def\Gr{\operatorname{Gr}}
\def\quot{/\!\!/}
\author[Valery Lunts, \v{S}pela \v{S}penko and Michel Van den Bergh]{Valery Lunts, \v{S}pela \v{S}penko and Michel
  Van den Bergh} 
\address[Valery Lunts]{Department of Mathematics, Indiana University Bloomington, Rawles Hall 251, 831 East 3rd St., Bloomington, IN 47405-7106}
\address{National Research University Higher School of Economics, Moscow}
\email{vlunts@indiana.edu}
\address[\v{S}pela \v{S}penko]{D\'epartement de Math\'ematique, Universit\'e Libre de Bruxelles, Campus de la Plaine CP 213, Bld du Triomphe, B-1050 Bruxelles}
\email{spela.spenko@ulb.be}
\address[Michel Van den Bergh]{Vakgroep Wiskunde, Universiteit Hasselt, Universitaire Campus \\
  B-3590 Diepenbeek}
\email{michel.vandenbergh@uhasselt.be}
\address{Vrije Universiteit Brussel, Department of Mathematics and Data Science, Pleinlaan 2\\B-1050 Brussel} 
\email{michel.van.den.bergh@vub.be}
\thanks{ The first author was supported by the Basic Research Program of the National
Research University Higher School of Economics. The second author is supported by a MIS grant from the National Fund for Scientific Research (FNRS) and an ACR grant from the Université Libre de Bruxelles. The third author is a senior researcher at the Research
  Foundation Flanders (FWO).  While working on this project he was
  supported by the ERC grant SCHEMES and the FWO grant G0D8616N: ``Hochschild cohomology and
  deformation theory of triangulated categories''.}
\newdimen\uboxsep \uboxsep=1ex
\def\uboxn#1{\vtop to 0pt{\hrule height 0pt depth 0pt\vskip\uboxsep
\hbox to 0pt{\hss #1\hss}\vss}}
\def\uboxs#1{\vbox to 0pt{\vss\hbox to 0pt{\hss #1\hss}
\vskip\uboxsep\hrule height 0pt depth 0pt}}
\let\oldmarginpar\marginpar
\def\marginpar#1{\oldmarginpar{\raggedright \tiny \offinterlineskip #1}}
\def\CoHA{\operatorname{CoHA}}
\def\Rep{\operatorname{Rep}}
\def\GL{\operatorname{GL}}
\def\KHA{\operatorname{KHA}}
\def\Mod{\operatorname{Mod}}
\def\Aut{\operatorname{Aut}}
\mathchardef\-="2D
\begin{document}

\title[On cohomological and K-theoretical Hall algebras]{On cohomological and K-theoretical Hall algebras of symmetric quivers}
%{Example of CoHa}
%

\begin{abstract}
We give a brief review of the cohomological Hall algebra CoHA $\Hscr$ and the K-theoretical Hall algebra $\KHA$ $\cR$ associated to quivers. 
%A Chern character map gives an injective map of graded vector spaces $\Rscr\to \hat{\Hscr}$ where $\hat{\Hscr}$ is the completion of $\Hscr$. %However, the map is not a homomorphism of algebras. We show that one can nevertheless (Zhang) twist $\hat{\Hscr}$ and obtain a tweak of the Chern character map $\Rscr\to \hat{\Hscr}^\tau$ which is an injective graded homomorphism of algebras. 
In the case of symmetric quivers, we show that there exists a homomorphism of algebras (obtained from a Chern character map) $\cR\to \hat{\Hscr}^{\tilde\sigma}$ where $\hat{\Hscr}^{\tilde\sigma}$ is a Zhang twist of the completion of $\Hscr$. Moreover, we establish the equivalence  of  categories of ``locally finite'' graded modules $\hat{\cH}^{\tilde\sigma}\-\Mod_{lf}\simeq \cR _\bbQ \-\Mod_{lf}$. Examples of locally finite $\hat{\cH}^{\tilde\sigma}$-, resp. $\cR _\bbQ$- modules appear naturally as the cohomology, resp. K-theory, of framed moduli spaces of quivers.
\end{abstract}
\maketitle

\section{Introduction}
Let $Q$ be a finite oriented quiver with the set of vertices $I$ and with $a_{ij}$ arrows from $i\in I$ to $j\in I$, so that $a_{ij}\in \bbZ _{\geq 0}$. Fix a dimension vector $\gamma =(\gamma ^i)_{i\in I}\in \bbZ _{\geq 0}^I$. 
%We have the affine space of representations of $Q$ 
%in the vector space $\oplus _{i\in I}\bbC ^{\gamma ^i}$:
Let $M_\gamma$ be the affine space of representations of $Q$ of dimension vector $\gamma$. 
%$$M_\gamma =\prod _{i,j\in I}\bbC ^{a_{ij}\gamma ^i\gamma
%^j}.$$
The variety $M_\gamma$ carries a natural action of %is acted upon via the conjugation by
 the group
$G_\gamma :=\prod _{i\in I}\GL(\gamma ^i,\bbC)$.

Let $\Hscr=\oplus_\gamma H_{G_\gamma}(M_\gamma,\QQ)$. There is a natural product on $\Hscr$ coming from extension of representations. The corresponding $\ZZ^I_{\geq 0}$-graded algebra is called {\em the cohomological Hall algebra} (CoHA), it was introduced by Kontsevich and Soibelman \cite{KS}. 

Similarly, the $\ZZ^I_{\geq 0}$-graded algebra $\Rscr=\oplus_\gamma K^0_{G_\gamma}(M_\gamma)$ with multiplication defined in an analogous way    is called {\em the K-theoretical Hall algebra} ($\KHA$), it was introduced by P{$\rm\breve{a}$}durariu \cite{Pa,Tudor1}. 

One can construct a natural (Chern character) homomorphism $\Rscr$ to the completion  $\hat\Hscr$ of $\Hscr$ which is an inclusion of $\ZZ^I_{\geq 0}$-graded spaces. However, in order to  obtain a multiplicative map (in the case of a symmetric quiver) we need to twist the product on $\hat{\Hscr}$ with an appropriate Zhang twist. 

\begin{theorem}\label{thm}(Theorem \ref{main thm})
Assume that $Q$ is symmetric. Then there exists a group homomorphism $\sigma:\ZZ^I\to \Aut(\hat{\Hscr})$ such that (a twist of) the Chern character map  $\Rscr\to \hat{\Hscr}^{\tilde\sigma}$ where $\hat{\Hscr}^{\tilde\sigma}$ is the Zhang twist of $\hat{\Hscr}$ is an injective homomorphism of $\ZZ^I_{\geq 0}$-graded algebras.
\end{theorem}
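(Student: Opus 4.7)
The plan is to trace the failure of the naive Chern character $ch:\Rscr\to\hat\Hscr$ to be multiplicative via Grothendieck--Riemann--Roch, and then to use the symmetry hypothesis $a_{ij}=a_{ji}$ to absorb the defect into a Zhang twist.

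First I recall that the Hall products on both $\Rscr$ and $\Hscr$ come from the same stacky correspondence
\[
 [M_{\gamma_1}/G_{\gamma_1}]\times[M_{\gamma_2}/G_{\gamma_2}]
 \xleftarrow{\;p\;} [Z_{\gamma_1,\gamma_2}/P_{\gamma_1,\gamma_2}]
 \xrightarrow{\;q\;} [M_{\gamma_1+\gamma_2}/G_{\gamma_1+\gamma_2}],
\]
where $p$ is a (flat) vector-bundle projection and $q$ is a proper map factoring as a closed immersion into $[M_{\gamma_1+\gamma_2}/P_{\gamma_1,\gamma_2}]$ followed by a flag-bundle projection. Since $p$ is flat, $ch$ commutes with $p^\ast$, and so the whole multiplicativity obstruction is concentrated in $q_\ast$. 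Equivariant Grothendieck--Riemann--Roch then yields
\[
 ch(\alpha\cdot_{\Rscr}\beta)\;=\;q_\ast\bigl(p^\ast(ch(\alpha)\boxtimes ch(\beta))\cdot\mathrm{Td}(T_q)\bigr),
\]
while $ch(\alpha)\cdot_{\Hscr}ch(\beta)$ is the same expression without the Todd factor, so everything reduces to understanding $\mathrm{Td}(T_q)$.

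Computing $T_q$ from its two factors, one finds it is the virtual difference of two $P$-equivariant bundles on $Z$, each pulled back along $p$ from a natural $L$-equivariant bundle on $M_{\gamma_1}\times M_{\gamma_2}$ (the contribution of the quiver arrows appearing as a combination weighted by the $a_{ij}$). Hence $\mathrm{Td}(T_q)=p^\ast(t_{\gamma_1,\gamma_2})$ for an invertible class $t_{\gamma_1,\gamma_2}\in\hat H^\ast_{G_{\gamma_1}}\mathbin{\hat\otimes}\hat H^\ast_{G_{\gamma_2}}$, depending on the Euler form $\chi(\gamma,\gamma')=\sum_i\gamma^i\gamma'^i-\sum_{i,j}a_{ij}\gamma^i\gamma'^j$. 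Here the symmetry of $Q$ enters decisively: $\chi$ becomes symmetric, and a direct manipulation of equivariant Chern roots of the tautological bundles produces a ``square-root'' factorization
\[
 t_{\gamma_1,\gamma_2}\;=\;\theta(\gamma_1)\boxtimes\theta(\gamma_2)
\]
in the completed tensor product, where $\theta(\gamma)\in\hat H^\ast_{G_\gamma}$ is a universal invertible class built from the Chern character of the tautological bundle on $BG_\gamma$.

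This factorization gives both ingredients at once. Setting $\widetilde{ch}_\gamma:=ch_\gamma\cdot\theta(\gamma)^{-1}$ on each graded piece yields the ``twist of the Chern character'' mentioned in the statement, still injective because $\theta(\gamma)$ is invertible. Feeding the factorization back into the GRR identity, the correction reshuffles into multiplication on the second factor by an automorphism $\sigma_{\gamma_1}$ of $\hat\Hscr$; symmetry and bilinearity of $\chi$ force the $\sigma_{\gamma_1}$'s to commute pairwise and to satisfy $\sigma_{\gamma+\gamma'}=\sigma_\gamma\sigma_{\gamma'}$, so they assemble into a group homomorphism $\sigma:\ZZ^I\to\Aut(\hat\Hscr)$ and $\hat\Hscr^{\tilde\sigma}$ is a well-defined associative algebra. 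The identity $\widetilde{ch}(\alpha\cdot_{\Rscr}\beta)=\widetilde{ch}(\alpha)\cdot_{\hat\Hscr^{\tilde\sigma}}\widetilde{ch}(\beta)$ is then a bookkeeping consequence of GRR and the factorization. Injectivity on each graded piece follows from $G_\gamma$-equivariant contractibility of $M_\gamma$, reducing to the Atiyah--Segal completion map $R(G_\gamma)\otimes\bbQ\hookrightarrow\hat H^\ast(BG_\gamma;\bbQ)$. I expect the technical heart to be precisely the square-root factorization $t_{\gamma_1,\gamma_2}=\theta(\gamma_1)\boxtimes\theta(\gamma_2)$ together with the verification that $\sigma$ is an honest group homomorphism rather than satisfying only a $2$-cocycle relation; this is where the hypothesis $a_{ij}=a_{ji}$ enters essentially, since without symmetry of $\chi$ the ``square root'' fails and one is left with at most a projective representation, insufficient for a Zhang twist.
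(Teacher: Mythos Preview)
Your conceptual setup via Grothendieck--Riemann--Roch is the right starting point, and it is indeed the Todd class of $q$ that measures the failure of $ch$ to be multiplicative. However, the key technical claim of your proposal---the ``square-root factorization'' $t_{\gamma_1,\gamma_2}=\theta(\gamma_1)\boxtimes\theta(\gamma_2)$---is false, and this is where the argument breaks down.

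The Todd correction genuinely mixes the two sets of variables: it is built from factors of the form $(x''_{j,\alpha_2}-x'_{i,\alpha_1})/(1-e^{x'_{i,\alpha_1}-x''_{j,\alpha_2}})$ and cannot be written as a product of a function of $x'$ alone with a function of $x''$ alone. In fact, if such a factorization \emph{did} hold, then a mere rescaling of the Chern character would already make it multiplicative for the \emph{untwisted} product on $\hat\Hscr$, and no Zhang twist would be needed at all. So your proposal does not actually explain where the twist comes from.

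What the paper does instead is more delicate. One introduces invertible elements $\eta_\gamma\in\hat\Hscr_\gamma$ (ratios of ``Vandermonde-type'' products in $x$ and in $e^x$) and proves two identities: first, that $\eta_{\gamma_1+\gamma_2}/(\eta_{\gamma_1}\eta_{\gamma_2})$ equals the ratio of the $\Hscr$-kernel to its exponential analogue $K_{\gamma_1,\gamma_2}$; second, that $K_{\gamma_1,\gamma_2}$ differs from the $\Rscr$-kernel $L_{\gamma_1,\gamma_2}$ by a factor $(a_{\gamma_1}^{\gamma_2}(x''))^{-1}$ that depends \emph{only} on the $x''$ variables, with exponents determined by $\gamma_1$ through $l_i(\gamma_1)=\gamma_1^i-\sum_j a_{ij}\gamma_1^j$. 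Thus the Todd defect decomposes as a $2$-coboundary in the $\eta$'s times a residual ``one-sided'' factor; the coboundary is absorbed by the rescaling $f\mapsto\eta_\gamma f$, and the residual factor is exactly the Zhang twist $\sigma_{\gamma_1}$. The symmetry hypothesis $a_{ij}=a_{ji}$ is used precisely to show this residual depends only on $x''$ (one re-indexes $\prod_{i,j}\prod_{\alpha_2}(e^{x''_{j,\alpha_2}})^{a_{ij}\gamma_1^i}$ by swapping $i\leftrightarrow j$), not to produce a factorization of the type you claim.
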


\begin{remark}
The twist of the Chern character map descends to associated graded algebras where it coincides with the usual Chern character map. In this case the homomorphism property was already established in \cite{Pa}.
\end{remark}

Moreover, we show that certain module categories of $\hat{\Hscr}$ and $\Rscr$ are equivalent. %For a graded algebra $B$ denote 
%by $B\text{\-\Mod}$ %(resp. $\text{\Mod\-}B$) 
%the abelian category of graded left $B$-modules (with degree preserving morphisms) and 
By $\Hscr\-\Mod_{lf}$, resp. $\Rscr_\QQ\-\Mod_{lf}$, we denote the abelian category of ``locally finite'' graded $\Hscr$-modules, resp. $\Rscr_\QQ$-modules (cf. Definition \ref{def of large ideal and loc f module}).

%Moreover, we show the equivalence of categories of locally finite graded modules (cf. Definition \ref{def of large ideal and loc f module}).

\begin{proposition}(Corollary \ref{summary of equivalences})
For a symmetric quiver the following categories of locally finite graded modules are equivalent 
\begin{equation}
  \label{eq:equivalence}
\cH\-\Mod_{lf}\simeq \hat{\cH}\-\Mod_{lf}\simeq
\hat{\cH}^{\tilde{\sigma}}\-\Mod_{lf}\simeq \cR _\bbQ\-\Mod_{lf}
.
\end{equation}
\end{proposition}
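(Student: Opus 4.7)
The plan is to establish the three equivalences in \ref{eq:equivalence} separately, exploiting the ``large ideal'' formulation of local finiteness from Definition~\ref{def of large ideal and loc f module}. The first equivalence $\cH\-\Mod_{lf}\simeq\hat{\cH}\-\Mod_{lf}$ should be essentially formal: local finiteness means that every element $m$ of the module is annihilated by all but finitely many graded components of the acting algebra, so an action of $\cH$ on $M$ extends uniquely to $\hat{\cH}$ because an infinite sum $\sum_\gamma a_\gamma\in\hat{\cH}$ reduces to a finite partial sum when applied to $m$. The restriction in the other direction is tautological, and the two constructions are mutually inverse and functorial.

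The second equivalence $\hat{\cH}\-\Mod_{lf}\simeq\hat{\cH}^{\tilde\sigma}\-\Mod_{lf}$ will follow from Zhang's general theorem on twisting systems: a group homomorphism $\sigma\colon\ZZ^I\to\Aut(\hat{\cH})$ as in Theorem~\ref{thm} induces an equivalence of $\ZZ^I$-graded module categories in which the underlying graded vector space of each module is unchanged and only the action formula is twisted. Since local finiteness depends only on the underlying graded vector space, this equivalence restricts to the locally finite subcategories.

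The third equivalence $\hat{\cH}^{\tilde\sigma}\-\Mod_{lf}\simeq\cR_\bbQ\-\Mod_{lf}$ is the deepest. Theorem~\ref{thm} supplies an injective graded algebra homomorphism $\cR_\bbQ\to\hat{\cH}^{\tilde\sigma}$, so every locally finite $\hat{\cH}^{\tilde\sigma}$-module restricts to a locally finite $\cR_\bbQ$-module. For the reverse direction, the equivariant Chern character identifies, degreewise, the completion of $K^0_{G_\gamma}(M_\gamma)_\bbQ$ at the augmentation ideal of $R(G_\gamma)_\bbQ$ with the degreewise completion of $H^*_{G_\gamma}(M_\gamma,\bbQ)$. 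Assembling these isomorphisms and invoking the multiplicativity of the twisted Chern character from Theorem~\ref{thm}, one obtains an isomorphism $\hat\cR_\bbQ\simeq\hat{\cH}^{\tilde\sigma}$ of $\ZZ^I_{\geq 0}$-graded algebras. Applying the argument of the first equivalence once more yields $\cR_\bbQ\-\Mod_{lf}\simeq\hat\cR_\bbQ\-\Mod_{lf}$, which combined with the previous equivalences closes the chain.

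The main obstacle is in the third step: matching the ``large ideal'' completion of $\cR_\bbQ$ used to define local finiteness with the augmentation-ideal completion in which the equivariant Chern character becomes an isomorphism, and verifying that this identification is compatible both with the Hall product and the Zhang twist $\tilde\sigma$. The first two equivalences are essentially bookkeeping once the definition of local finiteness is unwound.
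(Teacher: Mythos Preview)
Your argument for the first equivalence $\cH\-\Mod_{lf}\simeq\hat{\cH}\-\Mod_{lf}$ rests on a misreading of both the completion and of local finiteness. The completed algebra is $\hat{\cH}=\bigoplus_\gamma\hat{\cH}_\gamma$, still a \emph{direct sum} over dimension vectors; what is completed is each individual piece $\hat{\cH}_\gamma$, which becomes a ring of symmetric \emph{power series} rather than polynomials (Remark~\ref{product in completed}). An element of $\hat{\cH}$ is therefore never an infinite sum over $\gamma$, and your extension argument (``the infinite sum $\sum_\gamma a_\gamma$ reduces to a finite partial sum when applied to $m$'') targets a completion that is not the one in play. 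Correspondingly, local finiteness (Definition~\ref{def of large ideal and loc f module}) is not the condition that almost all $\gamma$-components annihilate $m$: a large ideal $I$ satisfies $I_\gamma\supset\cH_\gamma^{\ge n_\gamma}$, i.e.\ it contains everything of high \emph{cohomological} degree in each dimension-vector piece. The paper's actual argument (Lemma~\ref{first lemma on equiv}) is that for a large ideal $I\subset\cH$ its degreewise adic completion $\hat{I}\subset\hat{\cH}$ is again large and the natural map $\cH/I\to\hat{\cH}/\hat{I}$ is an isomorphism, since modding out by something containing $\cH_\gamma^{\ge n_\gamma}$ already erases the difference between polynomials and power series.

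Your second and third steps are close to the paper's line. For the Zhang twist, the claim that ``local finiteness depends only on the underlying graded vector space'' is not literally correct (it depends on annihilators), but the point that makes it work---left implicit in the paper's ``this is clear''---is that each $\tilde\sigma_\tau$ acts on $\hat{\cH}_\gamma$ by multiplication by an invertible power series with constant term $\pm 1$, hence preserves every $\hat{\cH}_\gamma^{\ge n}$ and with it the notion of large ideal. For the last step the paper (Lemma~\ref{second lemma on equiv}) runs essentially your program: the twisted Chern character identifies the augmentation-ideal completion of $\cR_{\bbQ,\gamma}$ with $\hat{\cH}_\gamma$, pullbacks of large ideals are large, and one repeats the large-ideal argument of the first step. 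So once the first equivalence is redone along the correct lines, the rest of your outline goes through.
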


Locally finite graded $\hat{\cH}$, resp. $\Rscr_\QQ$-modules appear naturally as the cohomology, resp. K-theory of the framed moduli space of $Q$. 

\begin{proposition}(Propositions \ref{prop that geom mod is locally finite}, \ref{prop:Rst_locally_finite}) %\marginpar{\tiny (vdb) An obvious question is if this is compatible with the Chern characters   $K^0(\Mscr_\gamma)\rightarrow H(\Mscr_\gamma)$. Does this make sense? Do we know this?}
Let $\Mscr_\gamma$ be the moduli space of representations of the augmented quiver $\tilde{Q}$ of dimension vector $\tilde{\gamma}=(1,\gamma)$. Then $\oplus_\gamma H(\Mscr_\gamma,\QQ)\in\hat{\cH}\-\Mod_{lf}$, $\oplus_\gamma K^0(\Mscr_\gamma)_\QQ\in \Rscr_\QQ\-\Mod_{lf}$. If $Q$ is symmetric then these two modules correspond to each other under the equivalence \eqref{eq:equivalence}, via the (twisted) Chern character (from Theorem \ref{thm}).
\end{proposition}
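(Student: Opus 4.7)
The plan is to treat each assertion in turn. First, I would construct the $\hat{\cH}$-module structure on $\bigoplus_\gamma H(\Mscr_\gamma,\QQ)$ and the $\cR_\QQ$-module structure on $\bigoplus_\gamma K^0(\Mscr_\gamma)_\QQ$ via Hecke-type correspondences of the same shape as those used to define the CoHA/KHA products. Concretely, for each decomposition $\gamma=\alpha+\beta$, inside $\Rep_{\tilde\gamma}(\tilde Q)$ there is the closed substack parametrizing framed representations that admit a (framing-free) sub-representation of dimension $(0,\beta)$. Restricting to the stable locus and taking GIT quotients by the appropriate subgroups of $G_{\tilde\gamma}$ produces a correspondence
$$\Mscr_\alpha\times M_\beta\;\longleftarrow\; Z_{\alpha,\beta}\;\longrightarrow\;\Mscr_\gamma,$$
whose pull-push in equivariant cohomology (resp.\ equivariant $K$-theory) furnishes maps $\cH_\beta\otimes H(\Mscr_\alpha,\QQ)\to H(\Mscr_{\alpha+\beta},\QQ)$, and analogously for $K$-theory. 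Associativity with respect to the algebra product is a formal consequence of the fact that the relevant iterated correspondences agree, exactly as for the CoHA product itself.

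Second, for local finiteness I would proceed as follows. Each $\Mscr_\gamma$ is a smooth quasi-projective variety of finite type, so $H^*(\Mscr_\gamma,\QQ)$ and $K^0(\Mscr_\gamma)_\QQ$ are finite-dimensional over $\QQ$. I would then verify the large-ideal annihilator condition from Definition~\ref{def of large ideal and loc f module}: given a class in some $H(\Mscr_\alpha,\QQ)$, I must exhibit a large ideal $I\subset\hat\cH$ killing it. For this I would analyze the Hecke correspondence $Z_{\alpha,\beta}$ for $\beta$ lying outside sufficiently large $\ZZ_{\geq 0}^I$-cones, using that any fixed cohomology class has bounded geometric support and that the Hecke action factors through finite-dimensional bounded-degree quotients of $\hat\cH$ on each element. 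The $K$-theoretic case is formally identical, replacing cohomology with $K^0$ and Betti finite-dimensionality with finiteness of $K^0(\Mscr_\gamma)_\QQ$.

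Third, for the symmetric-$Q$ compatibility with the (twisted) Chern character, I would re-run the proof of Theorem~\ref{thm}. The Chern character commutes with flat pullback strictly, and with proper pushforward up to a Todd-class correction; in the symmetric setting this correction is precisely absorbed by the Zhang twist $\tilde\sigma$, as shown in the proof of Theorem~\ref{thm}. Since the module maps and the algebra products are constructed from Hecke correspondences of identical formal shape (with the same tangent/normal bundle contributions), the same Grothendieck--Riemann--Roch bookkeeping yields the intertwining identity between the $\cR_\QQ$-action on $K$-theory and the $\hat\cH^{\tilde\sigma}$-action on cohomology. The main obstacle will be the local-finiteness step: identifying a workable family of large ideals annihilating arbitrary classes requires a careful geometric estimate controlling the Hecke correspondence $Z_{\alpha,\beta}$ as $\beta$ grows, and this is where I expect the bulk of the technical work in the proof to lie.
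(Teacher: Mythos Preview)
Your construction of the module structure via Hecke correspondences is correct and matches the paper's approach (diagrams \eqref{ext of module} and \eqref{ext of module to stable}). However, there is a genuine gap in your local-finiteness argument, and your compatibility argument takes an unnecessarily hard route.

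For local finiteness, you propose to exhibit large annihilator ideals for individual classes by estimating the Hecke correspondences $Z_{\alpha,\beta}$ as $\beta$ grows, and you flag this as the main technical obstacle. The paper avoids this entirely by observing that the module is \emph{cyclic}: the open embedding $j:\tilde M_\gamma^{st}\hookrightarrow \tilde M_\gamma$ induces a map of $\cH$-modules $j^*:\cH\to\tilde\cH^{st}$, and the crux is to prove that $j^*$ is \emph{surjective}. This is not automatic; the paper deduces it from the cell decomposition of $\cM_\gamma$ due to Engel--Reineke, which forces the cycle map $A(\cM_\gamma)_\QQ\to H(\cM_\gamma,\QQ)$ (and likewise $K^0(\cM_\gamma)_\QQ\to H(\cM_\gamma,\QQ)$) to be an isomorphism. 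Once surjectivity is known, the kernel is large simply because $\cM_\gamma$ is finite-dimensional, so $\cH_\gamma^{\gg 0}\subset\ker j^*_\gamma$. Your proposal never mentions $j^*$ or the cell decomposition, and without them it is not clear how you would produce the surjection $\bigoplus\cH(\tau)\twoheadrightarrow M$ required by Definition~\ref{def of large ideal and loc f module}.

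For the compatibility under the equivalence, your GRR bookkeeping would presumably work, but the paper's argument is shorter and purely algebraic: since each $j^*_\gamma:\hat\cH_\gamma\to\cH^{st}_\gamma$ is a \emph{ring} homomorphism and $\eta_\gamma$ (resp.\ $\tilde\eta_\gamma$) is a unit in $\hat\cH_\gamma$, one has $\eta_\gamma\cdot\ker j^*_\gamma=\ker j^*_\gamma$. Hence the twisted Chern character ${}^{\tilde\sigma}h:f\mapsto\tilde\eta_\gamma\,ch(f)$ descends directly to an isomorphism $\cR^{st}_\QQ\to\cH^{st}$ of $\cR_\QQ$-modules, with no Todd-class computation needed at the module level.
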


\section{Review of cohomological Hall algebra $\CoHA$}\label{sect rev of coha}

We review the cohomological Hall algebra $\CoHA$ following \cite{KS}.
Let $Q$ be a finite oriented quiver with the set of vertices $I$ and with $a_{ij}$ arrows from $i\in I$ to $j\in I$, so that $a_{ij}\in \bbZ _{\geq 0}$. Fix a dimension vector $\gamma =(\gamma ^i)_{i\in I}\in \bbZ _{\geq 0}^I$. We have the affine space of representations of $Q$ in the vector space $\oplus _{i\in I}\bbC ^{\gamma ^i}$:
$$M_\gamma =\prod _{i,j\in I}\bbC ^{a_{ij}\gamma ^i\gamma
^j}.$$
The variety $M_\gamma$ is acted upon via the conjugation by the group
$$G_\gamma :=\prod _{i\in I}\GL(\gamma ^i,\bbC).$$

One is interested in the $G_\gamma $-equivariant cohomology of the space $M_\gamma$. For this one can use for example the following model for the classifying space of $G_\gamma$: recall that the infinite dimensional Grassmannian
$$Gr(d,\infty):=\lim_{\stackrel{\longrightarrow}{n}}Gr(d,\bbC ^n)$$
is a classifying space for $\GL(d,\bbC)$. Then
$$BG_\gamma =\prod _{i\in I}B\!\GL (\gamma ^i,\bbC )=\prod _{i\in I}Gr(\gamma ^i,\infty)$$
We have the standard universal $G_\gamma $-bundle $EG_\gamma \to BG_\gamma$, and hence also the universal fibration over $BG_\gamma$:
$$M^{\operatorname{univ}}_\gamma:=(EG_\gamma \times M_\gamma)/G_\gamma\to EG_\gamma /G_\gamma=BG_\gamma.$$

Define the $\bbZ ^I_{\geq 0}$-graded $\bbQ$-vector space
$$\cH =\cH _Q:=\bigoplus _{\gamma \in \bbZ ^I_{\geq 0}}\cH _{\gamma},$$
where
$$\cH _\gamma :=H^\bullet _{G_\gamma }(M_\gamma ,\bbQ)=\bigoplus _{n\geq 0}H^n(M_\gamma ^{univ},\bbQ).$$

\begin{remark}
Notice that the space $M_\gamma$ is $G_\gamma$-equivariantly contractible to a point, hence $\cH _\gamma\simeq H_{G_\gamma}(pt)$. However, we are going to make $\cH $ into an algebra and the multiplication will depend on the spaces $M_\gamma$.
\end{remark}

\subsection{Product on $\Hscr$}
Fix two dimension vectors $\gamma _1,\gamma _2 \in \bbZ ^I_{\geq 0}$ and put $\gamma =\gamma _1+\gamma _2$. Consider the affine subspace $M_{\gamma _1,\gamma _2}\subset M_\gamma$, which consists of representations for which the standard subspaces $\bbC ^{\gamma _1^i}\subset \bbC ^{\gamma ^i}$ form a subrepresentation. The subspace $M_{\gamma _1,\gamma _2}$ is preserved by the action of the parabolic subgroup $G_{\gamma _1,\gamma _2}\subset G_\gamma$ which consists of transformations preserving subspaces $\bbC ^{\gamma _1^i}\subset \bbC ^{\gamma ^i}$, $i\in I$.
We have the natural morphisms of stacks
$$M_{\gamma _1}/G_{\gamma _1}\times M_{\gamma _2}/G_{\gamma _2}\stackrel{p}{\leftarrow} M_{\gamma _1,\gamma _2}/G_{\gamma _1,\gamma _2}\stackrel{i}{\to }M_{\gamma}/G_{\gamma _1,\gamma _2}\stackrel{\pi}{\to }M_{\gamma}/G_{\gamma }$$
The maps $i$ and $\pi$ are proper and we define the multiplication
$$m_{\gamma _1,\gamma _2}:\cH _{\gamma _1}\otimes \cH _{\gamma _2}\to \cH _{\gamma}$$
as the composition of the isomorphism
$$p^*:H^\bullet _{G_{\gamma _1}}(M_{\gamma _1},\bbQ)\otimes
H^\bullet _{G_{\gamma _2}}(M_{\gamma _2},\bbQ)\stackrel{\sim}{\to}H^\bullet _{G_{\gamma _1,\gamma _2}}(M_{\gamma _1,\gamma _2},\bbQ)$$
with the push forward maps $i_*$ and $\pi _*$. Explicitly, the map  $i$ is the closed embedding
$$(EG_\gamma \times M_{\gamma _1,\gamma _2})/G_{\gamma _1,\gamma _2}\to (EG_\gamma \times M_{\gamma})/G_{\gamma _1,\gamma _2}$$
and $\pi $ is the fiber bundle
$$(EG_\gamma \times M_{\gamma})/G_{\gamma _1,\gamma _2}\to
(EG_\gamma \times M_{\gamma})/G_{\gamma}$$
with the fiber $G_{\gamma}/G_{\gamma _1,\gamma _2}$ which is isomorphic to the product of Grassmannians
$\prod _{i\in I}Gr(\gamma _1^i,\bbC ^{\gamma ^i})$.
Let
$$c_1=\dim _{\bbC}M_\gamma -\dim _{\bbC}M_{\gamma _1,\gamma _2},\quad c_2=\dim _{\bbC}G_{\gamma _1,\gamma _2}-\dim _{\bbC}G_\gamma.$$
Then
$$i_*:H^\bullet _{G_{\gamma _1,\gamma _2}}(M_{\gamma _1,\gamma _2},\bbQ)\to
H^{\bullet +2c_1} _{G_{\gamma _1,\gamma _2}}(M_{\gamma},\bbQ),\quad
\pi _*:H^{\bullet} _{G_{\gamma _1,\gamma _2}}(M_{\gamma},\bbQ)\to H^{\bullet +2c_2} _{G_{\gamma}}(M_{\gamma},\bbQ ).
$$
So that
\begin{equation}\label{degree} m_{\gamma _1,\gamma _2}:H^s_{G_{\gamma _1}}(M_{\gamma _1},\bbQ)\times H^t_{G_{\gamma _2}}(M_{\gamma _2},\bbQ)\to H^{s+t+2c_1+2c_2}_{G_\gamma}(M_\gamma ,\bbQ).
\end{equation}

\begin{proposition} \label{associativity} \cite{KS} The product $m=(m_{\gamma_1,\gamma_2})_{\gamma_1,\gamma_2}$ makes $\cH$ into an associative $\bbZ _{\geq 0}^I$-graded algebra.
\end{proposition}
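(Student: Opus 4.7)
The plan is to prove associativity by showing that both iterated products $(ab)\cdot c$ and $a\cdot (bc)$ factor through a single common ``triple product,'' defined via three-step flag data, so the two answers coincide.

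First I would fix dimension vectors $\gamma_1,\gamma_2,\gamma_3$ with $\gamma=\gamma_1+\gamma_2+\gamma_3$, and introduce the affine subspace $M_{\gamma_1,\gamma_2,\gamma_3}\subset M_\gamma$ consisting of representations preserving the two-step flag $\bbC^{\gamma_1^i}\subset\bbC^{\gamma_1^i+\gamma_2^i}\subset\bbC^{\gamma^i}$, together with the parabolic subgroup $G_{\gamma_1,\gamma_2,\gamma_3}\subset G_\gamma$ stabilizing this flag. By mimicking the binary construction I would define a triple product $m_{\gamma_1,\gamma_2,\gamma_3}\colon \cH_{\gamma_1}\otimes\cH_{\gamma_2}\otimes\cH_{\gamma_3}\to\cH_\gamma$ as the composition $\pi_*i_*(p^*)$ where now $p$, $i$, $\pi$ refer to the three-factor analogues of the maps in Section~\ref{sect rev of coha}, namely
\[
\prod_{k=1}^3 M_{\gamma_k}/G_{\gamma_k}\xleftarrow{p} M_{\gamma_1,\gamma_2,\gamma_3}/G_{\gamma_1,\gamma_2,\gamma_3}\xrightarrow{i} M_\gamma/G_{\gamma_1,\gamma_2,\gamma_3}\xrightarrow{\pi} M_\gamma/G_\gamma .
\]

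Next I would show that both $m_{\gamma_1+\gamma_2,\gamma_3}\circ(m_{\gamma_1,\gamma_2}\otimes\id)$ and $m_{\gamma_1,\gamma_2+\gamma_3}\circ(\id\otimes m_{\gamma_2,\gamma_3})$ equal $m_{\gamma_1,\gamma_2,\gamma_3}$. This amounts to arranging a $3\times 3$ commutative diagram of stacks whose outer edges encode the iterated binary product and whose middle row/column encode the triple one. Two kinds of squares appear: (i) squares comparing subrepresentation loci $M_{\gamma_1,\gamma_2,\gamma_3}$ to $M_{\gamma_1+\gamma_2,\gamma_3}\times_{M_\gamma/G}\cdots$, which are Cartesian because a three-step flag is the same as a two-step flag with a further refinement of either piece; and (ii) squares comparing parabolic quotients to Grassmannian bundles, for which Cartesianness is a direct check on stabilizers. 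Once these squares are Cartesian, one obtains associativity from base change for proper pushforward (applied to the closed immersions $i$) combined with functoriality of pullback (for $p$) and the Leray--Hirsch/projection formula identities that say pushforward along the composition of two Grassmannian bundles equals the composition of pushforwards.

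Finally I would verify the $\bbZ_{\geq 0}^I$-grading statement, which is essentially automatic: the codimension/relative-dimension shifts $2c_1+2c_2$ in \eqref{degree} add telescopically under the three-step refinement, and the result lies in the right graded component of $\cH_{\gamma_1+\gamma_2+\gamma_3}$.

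The main obstacle I expect is bookkeeping rather than conceptual: one has to check carefully that the relevant squares in the big diagram are genuinely Cartesian as stacks (not only set-theoretically) so that equivariant proper base change applies, and that the composed pushforwards $\pi_*i_*$ commute with the refinement maps on the parabolic side in the order dictated by the two different bracketings. Keeping track of which spaces are quotients by which parabolic, and confirming that the two resulting composite correspondences are literally the same, is the delicate step; once the diagram is set up correctly, the cohomological identity is a formal consequence of base change and the projection formula.
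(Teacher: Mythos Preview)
The paper does not actually give its own proof of this proposition: it simply cites \cite{KS}. Your sketch---factoring both iterated products through a single triple product defined by three-step flag data $M_{\gamma_1,\gamma_2,\gamma_3}/G_{\gamma_1,\gamma_2,\gamma_3}$ and then using Cartesian squares plus proper base change---is precisely the standard argument from \cite{KS}, so there is nothing to compare against in the present paper; your approach is correct and is the one underlying the cited result.
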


The $\bbZ _{\geq 0}^I$-graded algebra $(\Hscr,m)$ is called {\em cohomological Hall algebra} (CoHA).

\subsection{Explicit description of $\CoHA$}\label{explicit description of CoHA}

Let $T\subset \GL(d)$ be a maximal torus with the Lie algebra $\mathfrak{t}\simeq \bbA ^d_{\bbQ}$. Let $W=S_d=N(T)/T$ be the corresponding Weyl group. Recall that there exists a canonical isomorphism of graded algebras
\begin{equation}\label{Borel isom}   H^\bullet (B\!\GL(d),\bbQ)= (\Sym _\bbQ \mathfrak{t}^*)^W\simeq \bbQ [x_1,...,x_d]^{S_d}
\end{equation}
where $\deg(x_i)=2$.

For a dimension vector $\gamma \in \bbZ ^I_{\geq 0}$ introduce the variables $x_{i,\alpha}$, where $i\in I$ and $\alpha \in \{1,...,\gamma ^i\}$. Then we get a natural isomorphism
\begin{equation}\label{identification as symm polynomials}\cH _\gamma =\bbQ [\{x_{i,\alpha }\}_{i\in I,\alpha \in \{1,...,\gamma ^i\}}]^{\prod _{i\in I}S_{\gamma ^i}}
\end{equation}
with the multiplication \eqref{degree}
$$m_{\gamma _1,\gamma _2}:\cH _{\gamma _1}\otimes \cH _{\gamma _2}\to
\cH _{\gamma _1+\gamma _2}.$$
We will denote the product $m_{\gamma _1,\gamma _2}(f_{\gamma _1},f_{\gamma _2})$ by $f_{\gamma _1}\cdot f_{\gamma _2}$.
The following theorem gives an explicit description of the multiplication in $\cH$:

\begin{theorem} \label{formula} \cite{KS} Given two polynomials $f_1(x')\in \cH _{\gamma _1},$ $f_2(x'')\in \cH _{\gamma _2}$, their product $f_1\cdot f_2\in \cH _\gamma$, $\gamma =\gamma _1+\gamma _2$ equals to the following rational function in variables $(x'_{i,\alpha })_{i\in I,\alpha \in \{1,...,\gamma _1^i\}},$ $(x''_{i,\alpha })_{i\in I,\alpha \in \{1,...,\gamma _2^i\}}$:
\begin{equation}\label{formula-for-multiplication} \sum _{i\in I}\sum _{(\gamma _1^i,\gamma _2^i){\rm -shuffles}}f_1((x'_{i,\alpha}))f_2((x''_{i,\alpha }))\frac
{\prod_{i,j\in I}\prod_{\alpha _1=1}^{\gamma _1^i}\prod_{\alpha _2=1}^{\gamma _2^j}(x''_{j,\alpha _2}-x'_{i,\alpha _1})^{a_{ij}}}
{\prod_{i\in I}\prod_{\alpha _1=1}^{\gamma _1^i}\prod_{\alpha _2=1}^{\gamma _2^i}(x''_{i,\alpha _2}-x'_{i,\alpha _1})}.\end{equation}
\end{theorem}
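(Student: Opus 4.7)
The plan is to unpack the geometric definition $m_{\gamma_1,\gamma_2}=\pi_* \circ i_* \circ p^*$ term by term, using the Borel identification \eqref{identification as symm polynomials} throughout, so that each operation becomes an explicit algebraic manipulation of symmetric polynomials.

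First I would reinterpret $p^*$. The map $p$ is a homotopy equivalence at the level of (approximations of) classifying stacks, since $M_{\gamma_1,\gamma_2}$ and $M_{\gamma_1}\times M_{\gamma_2}$ are both $G_{\gamma_1,\gamma_2}$-equivariantly contractible and $G_{\gamma_1,\gamma_2}$ retracts onto its Levi $G_{\gamma_1}\times G_{\gamma_2}$. Under \eqref{identification as symm polynomials} the resulting $p^*$ is just the obvious map $\cH_{\gamma_1}\otimes\cH_{\gamma_2}\to \cH_{\gamma_1,\gamma_2}$ sending $f_1(x')\otimes f_2(x'')$ to the polynomial $f_1(x')f_2(x'')$, where $x'_{i,\alpha}$ (resp.\ $x''_{i,\alpha}$) are Chern roots of the tautological bundles on the $\gamma_1$-part (resp.\ $\gamma_2$-part) of a maximal torus in $G_{\gamma_1,\gamma_2}$.

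Next I would compute $i_*$. The embedding $i$ is the $G_{\gamma_1,\gamma_2}$-equivariant closed inclusion of an affine subspace into an affine space, so the Gysin map is multiplication by the equivariant Euler class of the normal bundle $N=M_\gamma/M_{\gamma_1,\gamma_2}$. In block-matrix form, the missing entries of a representation are precisely the maps $\bbC^{\gamma_1^i}\to\bbC^{\gamma_2^j}$ indexed by the $a_{ij}$ arrows from $i$ to $j$, so that as a $G_{\gamma_1,\gamma_2}$-representation
\[
N=\bigoplus_{i,j\in I}\Hom(\bbC^{\gamma_1^i},\bbC^{\gamma_2^j})^{\oplus a_{ij}}.
\]
Its equivariant Chern roots are the weights $x''_{j,\alpha_2}-x'_{i,\alpha_1}$, hence
\[
e(N)=\prod_{i,j\in I}\prod_{\alpha_1=1}^{\gamma_1^i}\prod_{\alpha_2=1}^{\gamma_2^j}(x''_{j,\alpha_2}-x'_{i,\alpha_1})^{a_{ij}},
\]
which accounts for the numerator in \eqref{formula-for-multiplication}. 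A degree check against \eqref{degree} (using $c_1=\dim_\bbC N$) would serve as a useful consistency check.

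Finally I would handle $\pi_*$. Since $\pi$ is a fiber bundle with smooth proper fiber $G_\gamma/G_{\gamma_1,\gamma_2}=\prod_{i\in I}\Gr(\gamma_1^i,\bbC^{\gamma^i})$, and the base $M_\gamma/G_\gamma$ has polynomial equivariant cohomology, $\pi_*$ is (independently of $M_\gamma$) the Gysin pushforward for the product of partial-flag bundles $BG_{\gamma_1,\gamma_2}\to BG_\gamma$. For a single two-step flag bundle $\GL(n)/(\GL(k)\times\GL(n-k))$ the Atiyah--Bott/Weyl integration formula yields
\[
\pi_*(g(x',x''))=\sum_{w\in S_n/(S_k\times S_{n-k})}w\cdot\left(\frac{g(x',x'')}{\prod_{\alpha_1,\alpha_2}(x''_{\alpha_2}-x'_{\alpha_1})}\right);
\]
taking the product over $i\in I$ introduces exactly the denominator $\prod_{i}\prod_{\alpha_1,\alpha_2}(x''_{i,\alpha_2}-x'_{i,\alpha_1})$ and turns the set of shuffles into $\prod_{i}$-shuffles. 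Applying this to $g=p^*(f_1\otimes f_2)\cdot e(N)$ produces exactly \eqref{formula-for-multiplication}.

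The main obstacle is the shuffle formula for $\pi_*$: one must either cite the Atiyah--Bott integration formula along Grassmannian bundles or reprove it by restricting to a torus $T\subset G_\gamma$ (so that the equivariant fixed locus consists of the coordinate subspaces indexed by shuffles) and then invariantizing; keeping track of signs and of which Chern roots play the role of ``numerator" versus ``denominator" variables is where the bookkeeping is most delicate. Once this formula is in hand, assembling $\pi_*\circ i_*\circ p^*$ is purely formal.
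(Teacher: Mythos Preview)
The paper does not actually prove this theorem: it is stated with a citation to \cite{KS} and used as a black box throughout. So there is no ``paper's own proof'' to compare against.

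That said, your outline is correct and is exactly the standard derivation (the one in \cite{KS}). The identification of $p^*$ as the tautological inclusion of $f_1\otimes f_2\mapsto f_1f_2$, of $i_*$ as multiplication by the equivariant Euler class of $N=\bigoplus_{i,j}\Hom(\bbC^{\gamma_1^i},\bbC^{\gamma_2^j})^{\oplus a_{ij}}$, and of $\pi_*$ as the shuffle-sum integration formula along $\prod_i \Gr(\gamma_1^i,\bbC^{\gamma^i})$ are all right, and assembling them gives \eqref{formula-for-multiplication}. The only place to be careful is exactly where you flag it: for $\pi_*$ one either invokes the Brion/Atiyah--Bott integration formula along partial flag bundles, or reduces to the torus and uses that the $T$-fixed points of $\prod_i \Gr(\gamma_1^i,\bbC^{\gamma^i})$ are indexed by products of $(\gamma_1^i,\gamma_2^i)$-shuffles with tangent weights $x''_{i,\alpha_2}-x'_{i,\alpha_1}$; either route yields the denominator and the shuffle sum simultaneously.
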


\subsection{Case of a symmetric quiver}\label{case of symmetric quiver}
Let us first recall the definition of the Euler form for the quiver $Q$:
$$\chi _Q(\gamma _1,\gamma _2)=\sum _{i\in I}\gamma _1^i\gamma _2^i-\sum_{i,j\in I}a_{ij}\gamma ^i_1\gamma ^j_2.$$
%be the Euler form for the quiver $Q$. 

For any representations $R_1,R_2$ of $Q$ with dimension vectors $\gamma _1,\gamma _2$ respectively, one has (see \cite[Corollary 1.4.3]{Brion2})
$$\sum _i(-1)^i\dim \Ext ^i(R_1,R_2)=\dim \Hom (R_1,R_2)-\dim \Ext ^1(R_1,R_2)=\chi _Q(\gamma _1,\gamma _2).$$

Note that
\begin{equation}\label{eq:chic1c2}
\chi _Q(\gamma _1,\gamma _2)=-c_1-c_2.
\end{equation}
Assume that the quiver $Q$ is symmetric, i.e. $a_{ij}=a_{ji}$, $i,j\in I$. Then the Euler form $\chi _Q(\gamma _1,\gamma _2)$ is also symmetric. In this case we can make $\cH$ a $(\bbZ _{\geq 0}^I\times \bbZ)$-graded algebra as follows: for a polymonial $f\in \cH _\gamma$ of degree $k$ we define its bidegree to be $(\gamma ,2k+\chi _Q(\gamma ,\gamma))$.
So one has
$$\cH =\bigoplus \cH _{\gamma ,l},\quad \text{where}\quad
\gamma \in{\bbZ ^I_{\geq 0}},\ l\in \bbZ.$$
It follows from \eqref{degree} and \eqref{eq:chic1c2} that the product in $\cH$ is compatible with this bigrading, i.e. we have a bigraded algebra
\begin{equation} \label{bigraded algebra}
(\cH ,\cdot),\quad m:\cH _{\gamma ,l}\otimes \cH _{\gamma ',l}\to \cH _{\gamma +\gamma ',l+l'}.
\end{equation}
The formula \eqref{formula-for-multiplication} implies that  for elements $a_{\gamma }\in \cH _{\gamma }$, $a_{\gamma '}\in \cH _{\gamma '}$ one has
\begin{equation}\label{two-products} a_{\gamma }\cdot a_{\gamma '}=(-1)^{\chi _Q(\gamma ,\gamma ')}a_{\gamma '}\cdot a_{\gamma }
\end{equation}

One can twist the multiplication by a sign so that $\cH$ becomes super-commutative with respect to the $\bbZ$-grading. Namely define the homomorphism of abelian groups $\epsilon :\bbZ ^I\to \bbZ /2\bbZ $ by the formula
$$\epsilon (\gamma)=\chi _Q(\gamma, \gamma)\mod 2.$$
Thus $\epsilon (\gamma)$ is the partity of an $\gamma$ when specializing the $\bbZ^I_{\ge 0}\times \bbZ$-grading introduced above to a $\bbZ$-grading, given by the last factor.
%, i.e. 
%$$\epsilon (\gamma)= l\mod 2.$$
We have the bilinear form
\begin{equation}\label{auxillary} \bbZ ^I\times \bbZ ^I\to \bbZ /2\bbZ,\quad (\gamma _1,\gamma _2)\mapsto (\chi _Q(\gamma _1,\gamma _2)+\epsilon (\gamma _1)\epsilon (\gamma _2))\mod 2
\end{equation}
which induces a symmetric bilinear form $\beta$ on the space
$(\bbZ /2\bbZ )^I$, such that $\beta (\gamma ,\gamma)=0$ for all $\gamma \in (\bbZ /2\bbZ )^I$. Hence there exists a bilinear form $\psi $ on $(\bbZ /2\bbZ )^I$ such that
\begin{equation}\label{prop of psi} \psi (\gamma _1,\gamma _2)+\psi (\gamma _2,\gamma _1)=\beta (\gamma _1,\gamma _2).
\end{equation}

\begin{proposition}
Let $\psi$ be as in \eqref{prop of psi}. 
Define the twisted product on $\cH$ by the formula
$$a_{\gamma }\star a_{\gamma '}=(-1)^{\psi (\gamma ,\gamma ')}a_{\gamma }\cdot a_{\gamma '}\quad \text{for $a_\gamma \in \cH _\gamma $, $a_{\gamma '}\in \cH _{\gamma '}$}.$$
Then the algebra $(\cH ,\star )$ is associative and supercommutative, i.e.
$$a_{\gamma }\star a_{\gamma '}=(-1)^{\epsilon (\gamma)\epsilon (\gamma ')}a_{\gamma ' }\star a_{\gamma }.$$ Moreover, different choices of the form $\psi $ as in \eqref{prop of psi} lead to canonically isomorphic graded supercommutative algebras.
\end{proposition}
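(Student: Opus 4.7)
The plan has three parts: first check associativity of $\star$, then check its super-commutativity, and finally construct an isomorphism between the algebras $(\cH,\star)$ and $(\cH,\star')$ obtained from two choices $\psi,\psi'$ of the form in \eqref{prop of psi}. The first two steps amount to sign bookkeeping on top of the already established algebra $(\cH,\cdot)$; the third requires exhibiting an explicit rescaling intertwining the two twisted products.

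For associativity I would expand both bracketings. Starting from Proposition \ref{associativity} and the definition of $\star$, one gets
\[
(a_\gamma\star a_{\gamma'})\star a_{\gamma''}=(-1)^{\psi(\gamma,\gamma')+\psi(\gamma+\gamma',\gamma'')}\,a_\gamma\cdot a_{\gamma'}\cdot a_{\gamma''},
\]
while $a_\gamma\star(a_{\gamma'}\star a_{\gamma''})$ carries the exponent $\psi(\gamma',\gamma'')+\psi(\gamma,\gamma'+\gamma'')$; the two exponents agree by $\bbZ$-bilinearity of $\psi$, so associativity of $\star$ follows. For super-commutativity I would combine the defining sign with \eqref{two-products} and reabsorb the result into $\star$:
\[
a_\gamma\star a_{\gamma'}=(-1)^{\psi(\gamma,\gamma')+\chi_Q(\gamma,\gamma')+\psi(\gamma',\gamma)}\,a_{\gamma'}\star a_\gamma.
\]
By \eqref{prop of psi} the exponent is $\beta(\gamma,\gamma')+\chi_Q(\gamma,\gamma')$, and by \eqref{auxillary} this reduces mod $2$ to $\epsilon(\gamma)\epsilon(\gamma')$, as required.

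For the canonicity statement, the difference $\phi:=\psi'-\psi$ is a $\bbZ/2$-valued bilinear form on $\bbZ^I$ which is symmetric, since symmetrization of either $\psi$ or $\psi'$ yields the common form $\beta$. I would then produce a function $g:\bbZ^I\to\bbZ/2$ whose ``coboundary'' is $\phi$, i.e.\ $g(\gamma+\gamma')+g(\gamma)+g(\gamma')\equiv\phi(\gamma,\gamma')\pmod 2$; writing $\phi(\gamma,\gamma')=\sum_{i,j}m_{ij}\gamma^i{\gamma'}^j$ with $m_{ij}=m_{ji}$, an explicit choice is
\[
g(\gamma)=\sum_i m_{ii}\binom{\gamma^i}{2}+\sum_{i<j}m_{ij}\gamma^i\gamma^j\pmod 2,
\]
for which the defining identity is an easy direct verification using $\binom{a+b}{2}-\binom{a}{2}-\binom{b}{2}=ab$ and the symmetry of $m_{ij}$. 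The rescaling $T(a_\gamma):=(-1)^{g(\gamma)}a_\gamma$ then intertwines $\star$ with $\star'$, giving the required algebra isomorphism $(\cH,\star)\to(\cH,\star')$.

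The delicate point is the word \emph{canonical}: the function $g$ is determined by $\phi$ only up to a group homomorphism $\bbZ^I\to\bbZ/2$, and each such homomorphism produces an algebra automorphism of $(\cH,\star)$ of the form $a_\gamma\mapsto\pm a_\gamma$. So $T$ is canonical only up to precomposition with these automorphisms, and the honest statement is that the various $(\cH,\star)$ form a system that is canonically isomorphic modulo this finite ambiguity; the cleanest way to phrase this is to fix once and for all the explicit formula above applied to the standard basis of $\bbZ^I$. This ambiguity, rather than either sign computation, is the main thing one must be careful about.
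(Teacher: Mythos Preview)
Your proof is correct and follows essentially the same approach as the paper. The paper handles associativity and super-commutativity with the single sentence ``follows easily from the definitions,'' whereas you spell out the sign bookkeeping; for the isomorphism between twists, the paper also writes down an explicit cochain $\delta$ with $\delta(\gamma+\gamma')+\delta(\gamma)+\delta(\gamma')=\alpha(\gamma,\gamma')$ (where $\alpha=\psi+\psi'$), defined by $\delta(e_{i_1}+\dots+e_{i_n})=\sum_{s<t}\alpha(e_{i_s},e_{i_t})$, and then uses $a_\gamma\mapsto(-1)^{\delta(\gamma)}a_\gamma$. If you expand their $\delta$ in terms of the multiplicities $\gamma^i$ you get exactly your formula $g(\gamma)=\sum_i m_{ii}\binom{\gamma^i}{2}+\sum_{i<j}m_{ij}\gamma^i\gamma^j$, so the two constructions coincide; your remark about the residual ambiguity by a homomorphism $\bbZ^I\to\bbZ/2\bbZ$ is a fair point that the paper does not raise, and it is resolved (both in your argument and theirs) precisely by fixing this explicit formula.
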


\begin{proof} 
The first claim follows easily from the definitions. To show the last claim we first construct $\psi$ that satisfies \eqref{prop of psi}. For example, choose an order on the set $I$ and define $\psi$ on elements $(e_i)_{i\in I}$ of the standard basis of $(\bbZ/2\bbZ)^I$ by
$$\psi (e_i,e_j)=\beta (e_i,e_j)\ \text{if $i>j$},\quad
\psi (e_i,e_j)=0\ \text{if $i\leq j$}.$$

Let $\psi '$ be another bilinear form on $(\bbZ /2\bbZ )^I$ such that
$$\psi '(\gamma _1,\gamma _2)+\psi '(\gamma _2,\gamma _1)=\beta (\gamma _1,\gamma _2)$$
and let
$$a_{\gamma }\star' a_{\gamma '}=(-1)^{\psi '(\gamma ,\gamma ')}a_{\gamma }\cdot a_{\gamma '}\quad \text{for $a_\gamma \in \cH _\gamma $, $a_{\gamma '}\in \cH _{\gamma '}$}$$
be the corresponding supercommutative multiplication on $\cH$.

Notice that the bilinear form $\alpha =\psi -\psi '=\psi+\psi'$ is symmetric.
Consider the standard basis $(e_i)_{i\in I}$ of the semigroup
$\bbZ _{\geq 0}^I$ and define the function $\delta :\bbZ _{\geq 0}^I\to \bbZ /2\bbZ$ by
$$\delta(0)=0,\quad\delta (e_i)=0,\quad \delta (e_{i_1}+...+e_{i_n})=\sum _{1\leq s,t\leq n}\alpha (e_{i_s},e_{i_t}) \text{ if $n\geq 2$}.$$
Then for any $\gamma ,\gamma '\in (\bbZ /2\bbZ)^I$ we have
$$\alpha (\gamma ,\gamma ')=\delta (\gamma +\gamma ')+\delta (\gamma )+\delta (\gamma ')$$
hence the map $\cH \to \cH$, $a_\gamma \mapsto (-1)^{\delta(\gamma)}a_\gamma$ defines an isomorphism of graded supercommutative algebras
\[
(\cH ,\star )\to (\cH ,\star ').\qedhere
\]
\end{proof}

\subsection{Freeness of $\CoHA$ for a symmetric quiver}
The following result was conjectured by Kontsevich-Soibelman and proved by Efimov in \cite{Ef}.

\begin{theorem} \label{efimov's main} For any symmetric quiver $Q$, the $(\bbZ ^I_{\geq 0}\times \bbZ)$-graded algebra $(\cH ,\star)$ is free super-commutative, generated by a $(\bbZ ^I_{\geq 0}\times \bbZ)$-graded vector space $V$ of the form $V=V^{prim}\otimes \bbQ [x]$, where $x$ is a variable of bidegree $(0,2)\in \bbZ ^I_{\geq 0}\times \bbZ$, and for any $\gamma \in \bbZ ^I_{\geq 0}$ the space $V^{prim}_{\gamma ,l}$ is finite dimensional and nonzero only for finitely many $l\in \bbZ$.
\end{theorem}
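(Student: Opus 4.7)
The strategy is to compare the bigraded Poincar\'e series of $\cH$ against the one a free super-commutative algebra on $V^{prim}\otimes\bbQ[x]$ would have, and then to produce the primitive part geometrically with enough Hodge-theoretic control to guarantee positivity.

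First I would record the Poincar\'e series of $\cH$. From \eqref{identification as symm polynomials} and the bidegree convention of Section~\ref{case of symmetric quiver} one obtains
\[
P_\gamma(q)\;:=\;\sum_l (\dim \cH_{\gamma,l})\,q^l\;=\;q^{\chi_Q(\gamma,\gamma)}\prod_{i\in I}\prod_{j=1}^{\gamma^i}\frac{1}{1-q^{2j}}.
\]
Assemble these into $P(q,T)=\sum_\gamma P_\gamma(q)T^\gamma$. A standard plethystic/PBW computation shows that $(\cH,\star)\simeq\Sym(V^{prim}\otimes\bbQ[x])$ with $x$ of bidegree $(0,2)$ is equivalent to the identity
\[
\mathrm{PLog}\bigl(P(q,T)\bigr)\;=\;\frac{1}{1-q^2}\sum_{\gamma\neq 0,\,l}\dim V^{prim}_{\gamma,l}\,q^l\,T^\gamma,
\]
with the super-commutative sign convention absorbed into the plethystic logarithm $\mathrm{PLog}$. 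The theorem therefore reduces to showing that $\mathrm{PLog}(P(q,T))$ is divisible by $(1-q^2)^{-1}$ in $\bbZ[q,q^{-1}][[T]]$, has non-negative coefficients, and for each fixed $\gamma$ has only finitely many nonzero $q$-coefficients.

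To realise the right-hand side geometrically I would invoke the Harder--Narasimhan stratification of $M_\gamma$ for a generic stability parameter $\theta$: Reineke's formula yields a second expression for $P(q,T)$ as a sum over HN-types, in which each stratum factors as a product over its stable building blocks, mirroring the desired PBW factorisation of $\Sym(V^{prim}\otimes\bbQ[x])$. Matching the two expressions tentatively identifies $V^{prim}_{\gamma,\bullet}$ with the cohomology of the moduli of $\theta$-stable representations of dimension $\gamma$, the $\bbQ[x]$-factor corresponding to the residual $H^\bullet(B\bbG_m)$-cohomology coming from the central scalar subgroup of $G_\gamma$.

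The main obstacle is promoting this numerical match into an honest algebra isomorphism with \emph{non-negative integer} primitive multiplicities. The mechanism is Hodge-theoretic: for generic $\theta$ the moduli of $\theta$-stables is smooth and, after an appropriate framing, it becomes smooth projective, so its rational mixed Hodge structure is pure. Purity forces $\mathrm{PLog}(P(q,T))$ to be polynomial in $q$ for each $\gamma$ (finiteness in $l$) and each of its coefficients to be a non-negative integer (positivity). The $\bbQ[x]$-factor is produced by a Lefschetz-type operator, given by multiplication with a distinguished central class of bidegree $(0,2)$ in $\cH$; its freeness on the primitive part is precisely Hard Lefschetz on the framed moduli. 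The crux of the entire argument is this purity/positivity step --- essentially the symmetric-quiver analogue of the Kac positivity conjecture --- which upgrades the a priori rational function $P(q,T)$ into an honest character of the bigraded vector space of primitives.
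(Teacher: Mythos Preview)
The paper does not contain a proof of this statement: it is introduced as ``conjectured by Kontsevich--Soibelman and proved by Efimov in \cite{Ef}'' and used as a black box in the review section on CoHA. There is therefore no proof in the paper to compare your proposal against.

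As to whether your sketch matches Efimov's actual argument: the overall shape (Poincar\'e series combinatorics, framed moduli, purity giving positivity, a degree-two operator giving the $\bbQ[x]$-factor) is in the right neighbourhood, but several steps are loose or misstated. Efimov does not first compute $\mathrm{PLog}$ and then search for a geometric realisation; he constructs an explicit PBW-type filtration on the shuffle algebra and produces generators directly from the cohomology of framed moduli spaces, after which the Poincar\'e series identity is a consequence rather than the starting point. Your claim that framing makes the moduli ``smooth projective'' is not correct in general --- the $\cM_\gamma$ in this paper are only smooth quasi-projective --- and the purity input Efimov actually uses is the Reineke cell decomposition (equivalently, that these spaces have polynomial point counts), not classical Hodge theory of projective varieties. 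Finally, the $\bbQ[x]$-factor does not come from Hard Lefschetz on a projective variety; it is produced by multiplication by an explicit central element of $\cH$ of bidegree $(0,2)$, and its freeness on the primitive part is checked by a direct argument with the shuffle formula rather than by an abstract Lefschetz package. So your outline captures the flavour but would not, as written, close into a proof without substantially reworking those three points.
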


%In addition Efimov gives the following estimate.
%For a symmetric quiver $Q$ and $\gamma \in \bbZ _{\geq 0}^I\backslash \{0\}$ put
%$$N_\gamma (Q):=\frac{1}{2}[\sum _{i\neq j\in I}a_{ij}\gamma ^i\gamma ^j+\sum _{i\in I}\max (a_{ii}-1,0)\gamma ^i(\gamma ^i-1)]-\sum _{i\in I}\gamma ^i+2$$

%\begin{theorem} \label{Efimov's second} In the notation of Theorem \ref{efimov's main}, if $V^{prim}_{\gamma ,k}\neq 0$, then $\gamma \neq 0$ and
%$$k\cong \chi _Q(\gamma ,\gamma )\mod 2, \quad \text{and}\quad
%\chi _Q(\gamma ,\gamma )\leq k<\chi _Q(\gamma ,\gamma)+2N_\gamma (Q)$$
%\end{theorem}

\subsection{Example of the quiver with one vertex}\label{first example of simpliest quiver} Consider now the quiver with one vertex and $m\geq 0$ loops. The dimension vector $\gamma $ is simply a nonnegative integer.

%Then in the previous notation $\gamma$ is just a nonnegative integer and  $M_\gamma$ is the zero vector space acted upon by the group $G_\gamma =\GL(\gamma)$. The closed embedding $i$ is the identity and the proper map $\pi $ is the natural fibration
%$$BG_{\gamma _1,\gamma _2}\to B\!\GL(\gamma)$$
%with the fiber $\GL(\gamma )/G_{\gamma _1,\gamma _2}=Gr(\gamma _1,\bbC ^\gamma)$.
 %\maketitle
We have $\cH _\gamma \simeq \bbQ [x_1,...,x_\gamma]^{S_\gamma}$ and the multiplication in Theorem \ref{formula} is given as follows: if $f_1\in \cH _{\gamma _1},$ $f_2\in \cH _{\gamma _2}$, then the product $f_1\cdot f_2$ is
\begin{equation}\label{simple case}
\sum _{(\gamma _1,\gamma _2){\rm -shuffles}} f_1(x'_1,...,x'_{\gamma _1})f_2(x''_1,...,x''_{\gamma _2})
\prod_{\alpha _1=1}^{\gamma _1}\prod_{\alpha _2=1}^{\gamma _2}(x''_{\alpha _2}-x'_{\alpha _1})^{m-1}.
\end{equation}

%We have $\chi _Q(\gamma _1,\gamma _2)=(1-m)\gamma _1\gamma _2$.
%Hence a symmetric polynomial $f\in \cH _\gamma$ of degree $k$ has the bidegree $(\gamma ,2k+(1-m)\gamma ^2)$.
 The formula $\eqref{simple case}$ implies that the bigraded algebra $(\cH ,\cdot)$ is commutative for odd $m$ and is supercommutative for even $m$. Moreover,
$$\chi _Q(\gamma _1,\gamma _2)=(1-m)\gamma _1\gamma _2$$
so the bilinear form $\beta $, induced from \eqref{auxillary}, is
$$\beta (\gamma _1,\gamma _2)=
(1-m)\gamma _1\gamma _2(1+(1-m)\gamma _1\gamma _2)=0$$
and therefore we may assume that the multiplications $(-)\cdot (-)$ and $(-)\star (-)$ coincide.

In case $m=0$, the bigraded algebra $\cH$ is the exterior algebra on
the graded vector space $\cH _1=\oplus _{k\geq 0} \cH _{1,2k+1}$,
where each $\cH _{1,2k+1}$ is 1-dimensional.

\subsection{Completed version of $\CoHA$}
Let $Q$ be a quiver. We define the {\it completed} cohomological Hall algebra  $\hat{\cH}=\hat{\cH}_Q$ to be
$\hat{\cH}=\bigoplus _{\gamma} \hat{\cH }_\gamma $, where
$$\hat{\cH }_\gamma :=\bigotimes _{i\in I}\prod _nH^n_{\GL(\gamma ^i)}(M_\gamma ,\bbQ)\simeq
\bigotimes _{i\in I}\prod _nH^n(B\!\GL(\gamma ^i) ,\bbQ)$$
with the multiplication
$$\hat {\cH }_\gamma \otimes \hat {\cH }_{\gamma '}\to \hat {\cH }_{\gamma +\gamma '}$$
which is induced from $(\cH ,\cdot)$. This makes  $\hat{\cH}$ an associative $\bbZ ^I_{\geq 0}$-graded algebra. Clearly $\cH$ is a graded subalgebra of $\hat{\cH}$.

\begin{remark} \label{product in completed}
Similarly to the natural isomorphism \eqref{identification as symm polynomials} we may identify the space $\hat{\cH }_\gamma$ with the tensor product of rings of symmetric power series
$$\hat{\cH }_\gamma =\bigotimes _{i\in I}\bbQ[[x_{i,1},...,x_{i,\gamma ^i}]]^{S_{\gamma ^i}}.$$
Then the formula for the product of two such functions $f_1\in \hat{\cH}_{\gamma _1},\ f_2\in \hat{\cH}_{\gamma _2}$ is identical to \eqref{formula-for-multiplication}.
\end{remark}

\begin{remark} \label{suppercommutative for completed} If the quiver $Q$ is symmetric the formula \eqref{two-products} implies that a one-sided graded ideal in $\hat{\cH}$ (or in $\cH$) is a two-sided one.
\end{remark}

\section{Review of K-theoretical Hall algebra $\KHA$}
The K-theoretical Hall algebra was studied in \cite{Pa} (in a more general setting). Let us recall the basic facts.

Let $Q$ be a quiver. Instead of considering the equivariant cohomology $H^\bullet _{G_\gamma}(M_\gamma ,\bbQ)$ we now consider the equivariant algebraic $K$-group $\cR _{\gamma }:=K^{G_\gamma}_0(M_\gamma)$, i.e. the Grothendieck group of the stack $M_\gamma /G_\gamma$, which is canonically isomorphic to the representation ring $\Rep (G_\gamma)$. Define
$$\cR =\cR _Q:=\bigoplus _{\gamma \in \bbZ ^I_{\geq 0}}\cR _\gamma.$$
As in the case of the cohomological Hall algebra $\cH$ we turn $\cR$ into a $\bbZ ^I_{\geq 0}$-graded ring with the multiplication
$$\mu _{\gamma _1,\gamma _2}:\cR _{\gamma _1}\times \cR _{\gamma _2}\to \cR _{\gamma _1+\gamma _2}$$
defined similarly to the multiplication in $\cH$. Namely, given dimension vectors $\gamma _1,\gamma _2$ and $\gamma =\gamma _1+\gamma _2$ consider the diagram of stacks
$$M_{\gamma _1}/G_{\gamma _1}\times M_{\gamma _2}/G_{\gamma _2}\stackrel{p}{\leftarrow} M_{\gamma _1,\gamma _2}/G_{\gamma _1,\gamma _2}\stackrel{i}{\to}M_\gamma /G_{\gamma _1,\gamma _2}\stackrel{\pi }{\to} M_{\gamma }/G_\gamma$$
and define the multiplication $\mu _{\gamma _1,\gamma _2}:\cR _{\gamma _1}\times \cR _{\gamma _2}\to \cR _{\gamma _1+\gamma _2}$ as the composition of the induced maps
$$\mu _{\gamma _1,\gamma _2}=\pi _*\cdot i_*\cdot p^*:K_0^{G_{\gamma _1}}(M_{\gamma _1})\times K_0^{G_{\gamma _2}}(M_{\gamma _2})\to
K_0^{G
 _\gamma }(M_\gamma).$$

\begin{proposition}\cite{Pa} \label{k0-assoc} The multiplication $\mu :\cR \otimes \cR \to \cR$ makes $\cR$ into an associative algebra over $\bbZ$.
\end{proposition}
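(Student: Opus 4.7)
The plan is to imitate verbatim the proof of Proposition \ref{associativity} for $\cH$, replacing equivariant cohomology by equivariant $K$-theory. Given dimension vectors $\gamma_1,\gamma_2,\gamma_3$ with sum $\gamma$, introduce the three-step flag analogue $M_{\gamma_1,\gamma_2,\gamma_3}\subset M_\gamma$ consisting of representations preserving the standard flag
$\bbC^{\gamma_1^i}\subset \bbC^{\gamma_1^i+\gamma_2^i}\subset \bbC^{\gamma^i}$
at every vertex $i\in I$, together with the associated parabolic $G_{\gamma_1,\gamma_2,\gamma_3}\subset G_\gamma$. This yields a diagram of stacks
$$
\prod_{k=1}^3 M_{\gamma_k}/G_{\gamma_k}\stackrel{P}{\leftarrow}M_{\gamma_1,\gamma_2,\gamma_3}/G_{\gamma_1,\gamma_2,\gamma_3}\stackrel{I}{\to}M_\gamma/G_{\gamma_1,\gamma_2,\gamma_3}\stackrel{\Pi}{\to}M_\gamma/G_\gamma.
$$
The strategy is to show that both $(a\cdot b)\cdot c$ and $a\cdot(b\cdot c)$ agree with the single composition $\Pi_*\,I_*\,P^*$.

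To carry this out I would factor the two iterated products through the partial parabolic quotients $G_{\gamma_1,\gamma_2}\times G_{\gamma_3}$ and $G_{\gamma_1}\times G_{\gamma_2,\gamma_3}$ and compare with $G_{\gamma_1,\gamma_2,\gamma_3}$. In each case one gets a tower of Cartesian squares whose horizontal arrows are smooth quotient maps (between parabolic-by-Levi quotient stacks), whose diagonal arrows are regular closed embeddings (further refining the flag condition inside $M_\gamma$), and whose vertical arrows are the smooth proper Grassmannian bundles. The associativity then reduces to the repeated application of three facts in equivariant $K$-theory: flat base change $g^\ast f_\ast=f'_\ast g'{}^\ast$ for Cartesian squares with one side flat; the projection formula $f_\ast(f^\ast(x)\cdot y)=x\cdot f_\ast(y)$ for proper $f$; and the compatibility of $p^\ast$ with the exterior product (so that $P^\ast$ can be written as an iterate of the two-step $p^\ast$'s). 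Combined, these identities collapse both bracketings onto the single expression $\Pi_*I_*P^*$ above, just as they did in the CoHA case.

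The main obstacle is the verification that the relevant Cartesian squares satisfy the hypotheses needed for flat base change and the projection formula to apply at the stacky level. The smooth parabolic quotient maps $M_\bullet/G_{\gamma_1,\gamma_2,\gamma_3}\to M_\bullet/G_{\gamma_1,\gamma_2}\times M_{\gamma_3}/G_{\gamma_3}$ are flat, so base change against them causes no trouble. The delicate squares are those involving the closed embeddings $i$: here one needs Tor-independence, which follows from the fact that the defining linear equations for $M_{\gamma_1,\gamma_2}\subset M_\gamma$ and for $M_{\gamma_2,\gamma_3}\subset M_\gamma$ are transverse inside $M_\gamma$, intersecting precisely in $M_{\gamma_1,\gamma_2,\gamma_3}$ with the expected codimension. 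Once this transversality is recorded, one can invoke the standard form of base change for regular embeddings between smooth $G$-equivariant schemes (or, equivalently, perfect complexes on quotient stacks), and associativity drops out. Alternatively, one may simply cite the more general theorem of P\u{a}durariu \cite{Pa}, of which this is a special case.
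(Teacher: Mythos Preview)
The paper itself gives no proof of this proposition: it simply states the result and attributes it to P\u{a}durariu \cite{Pa}. So there is nothing to compare against on the paper's side.

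Your sketch is the standard approach and is essentially correct. The three-step parabolic correspondence, the reduction of both bracketings to $\Pi_*I_*P^*$, and the identification of the needed functoriality (flat base change, projection formula, compatibility of $p^*$ with exterior products) are exactly the ingredients used in the CoHA associativity proof in \cite{KS}, and they carry over to equivariant $K$-theory without change because all spaces involved are smooth affine $G$-varieties and all maps are either regular closed embeddings of linear subspaces or smooth proper Grassmannian bundles. The transversality remark you make is the correct justification for Tor-independence in the relevant squares. Your closing sentence, deferring to \cite{Pa}, is in fact precisely what the paper does.
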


The $\ZZ_{\geq 0}^I$-graded algebra $(\Rscr,\mu)$ is called {\em K-theoretical Hall  algebra} ($\KHA$).
%\begin{proof} See  .
%\end{proof}
\subsection{Explicit description of $\KHA$}\label{explicit kha} Let $d$ be a positive integer and let $T\simeq (\bbC ^*)^d\subset \GL(d)$ be a maximal torus, say the subgroup of diagonal matrices. Let $z_1,...,z_d$ be the standard coordinates on $T$. We have the canonical ring isomorphism $\Rep(T)=\bbZ [z_1^{\pm 1},...,z_d^{\pm 1}]$.
The action of the Weyl group $N(T)/T=S_d$ permutes the coordinates $z_i$  and the inclusion of groups $T\subset \GL(d)$ induces the isomorphism
\begin{equation}\label{isom group torus}
\Rep(\GL(d))=\Rep(T)^{S_d}=\bbZ [z_1^{\pm 1},...,z_d^{\pm 1}]^{S_d}
\end{equation}

For a quiver $Q$ and a dimension vector $\gamma =(\gamma_i)\in
\bbZ ^I$ let $z_{i,1},...,z_{i,\gamma ^i}$ be the standard coordinates on the diagonal torus in $\GL(\gamma ^i)$. Then
$$\Rep (G_\gamma)=\otimes _{\bbZ}\Rep (G_{\gamma _i})=\bbZ [\{z_{i,\alpha}^{\pm 1}\}_{i\in I,\alpha \in \{1,...,\gamma ^i\}}]^{\prod _{i\in I}S_{\gamma ^i}}.$$
Therefore we have the canonical isomorphism of groups
$$\cR _{\gamma}: =K_0^{G_\gamma}(M_\gamma )=\Rep (G_\gamma )=\bbZ [\{z_{i,\alpha}^{\pm 1}\}_{i\in I,\alpha \in \{1,...,\gamma ^i\}}]^{\prod _{i\in I}S_{\gamma ^i}}$$
In these terms the product map
$$\mu _{\gamma _1,\gamma _2}:\cR _{ \gamma _1}\times \cR _{\gamma _2}\to \cR _{ \gamma _1+\gamma _2}$$
is as follows.

\begin{theorem}\label{mult in R} \cite{Pa} In the above notation let $\gamma _1=(\gamma _1^i)$,
$\gamma _2=(\gamma _2^i)$ be dimension vectors and $f_1\in \cR _{\gamma _1}$, $f_2\in \cR _{\gamma _2}$. Then their product $\mu (f_1, f_2)\in \cR _{\gamma _1+\gamma _2}$ is the following function in the variables $(z_{i,\alpha}^{\prime \pm 1})_{i\in I,\alpha \in \{1,...,\gamma _1 ^i\}}$, $(z_{i,\alpha }^{\prime \prime \pm 1})_{i\in I,\alpha  \in \{1,...,\gamma _2 ^i\}}$
\begin{equation} \label{formula for mult in R}
\sum _{i\in I}\sum _{(\gamma _1^i,\gamma _2^i){\rm -shuffles}}f_1(z_{i,\alpha}^{\prime \pm 1})f_2(z_{i,\alpha }^{\prime \prime \pm 1})
\frac{\prod _{i,j\in I}\prod _{\alpha _1=1}^{\gamma _1^i}\prod _{\alpha _2=1}^{\gamma _2^i}(1-z^{\prime }_{i,\alpha _1}z^{\prime \prime -1}_{j,\alpha _2})^{a_{ij}}}{\prod _{i\in I}\prod _{\alpha _1=1}^{\gamma _1^i}\prod _{\alpha _2=1}^{\gamma _2^i}(1-z^{\prime }_{i,\alpha _1}z^{\prime \prime -1}_{i, \alpha _2})}.
\end{equation}
\end{theorem}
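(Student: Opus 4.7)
The proof parallels that of Theorem \ref{formula} in the cohomological setting, with ordinary equivariant cohomology replaced by equivariant $K$-theory and the ordinary Euler class of a representation replaced by its $K$-theoretic analogue $\lambda_{-1}(N^*)$. Since each of the spaces $M_\gamma$, $M_{\gamma_1,\gamma_2}$ appearing in the convolution diagram
$$M_{\gamma_1}/G_{\gamma_1}\times M_{\gamma_2}/G_{\gamma_2}\stackrel{p}{\leftarrow}M_{\gamma_1,\gamma_2}/G_{\gamma_1,\gamma_2}\stackrel{i}{\to}M_\gamma/G_{\gamma_1,\gamma_2}\stackrel{\pi}{\to}M_\gamma/G_\gamma$$
is linearly contractible to a point, all the equivariant $K$-groups that arise are representation rings, and the problem reduces to writing each of the three operations $p^*$, $i_*$, $\pi_*$ as an explicit algebraic operation on symmetric Laurent polynomials in the torus variables $z'_{i,\alpha}$, $z''_{i,\alpha}$.

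The three identifications are as follows. First, since the Levi quotient of $G_{\gamma_1,\gamma_2}$ is $G_{\gamma_1}\times G_{\gamma_2}$, the map $p^*$ becomes the canonical external-product isomorphism $\Rep(G_{\gamma_1})\otimes\Rep(G_{\gamma_2})\simeq\Rep(G_{\gamma_1,\gamma_2})$, contributing the factor $f_1(z')f_2(z'')$. Second, a splitting $\bbC^{\gamma^i}=\bbC^{\gamma_1^i}\oplus\bbC^{\gamma_2^i}$ exhibits the normal representation of the closed embedding $i$ as $N=\bigoplus_{i,j\in I}\Hom(\bbC^{\gamma_1^i},\bbC^{\gamma_2^j})^{\oplus a_{ij}}$, whose torus weights are $z''_{j,\alpha_2}/z'_{i,\alpha_1}$; the Thom isomorphism in equivariant $K$-theory identifies $i_*$ with multiplication by the $K$-theoretic Euler class
$$\lambda_{-1}(N^*)=\prod_{i,j\in I}\prod_{\alpha_1,\alpha_2}\bigl(1-z'_{i,\alpha_1}(z''_{j,\alpha_2})^{-1}\bigr)^{a_{ij}},$$
which is precisely the numerator of \eqref{formula for mult in R}. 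Third, $\pi$ is a fibration with fibre $\prod_{i\in I}\Gr(\gamma_1^i,\bbC^{\gamma^i})$, and $\pi_*$ realises $K$-theoretic parabolic induction from $G_{\gamma_1,\gamma_2}$ to $G_\gamma$; by the $K$-theoretic Atiyah--Bott--Segal--Tu fixed-point formula applied fibrewise (equivalently by Demazure's formula for $K$-theoretic Borel--Weil--Bott on a partial flag variety) one obtains a sum over the cosets $W_\gamma/(W_{\gamma_1}\times W_{\gamma_2})$ -- i.e.\ over per-vertex $(\gamma_1^i,\gamma_2^i)$-shuffles -- with denominator equal to the $K$-theoretic Euler class
$$\prod_{i\in I}\prod_{\alpha_1,\alpha_2}\bigl(1-z'_{i,\alpha_1}(z''_{i,\alpha_2})^{-1}\bigr)$$
of the tangent bundle of the fibre at the identity fixed point. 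Composing $\pi_*\circ i_*\circ p^*$ then produces formula \eqref{formula for mult in R}.

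The main obstacle is the third step: one has to pin down the correct sign and duality conventions for the $K$-theoretic Euler class (tangent versus cotangent bundle), and, more substantially, verify that the apparent poles on the hyperplanes $z'_{i,\alpha_1}=z''_{i,\alpha_2}$ in each individual shuffle summand cancel after the full symmetrisation, so that $\pi_*\circ i_*\circ p^*(f_1\otimes f_2)$ genuinely lands in $\Rep(G_\gamma)=\bbZ[z^{\pm 1}]^{\prod_i S_{\gamma^i}}$. A self-contained check can proceed vertex by vertex, reducing to the classical pushforward formula for $K_0^T(\Gr(k,n))\to K_0^T(\mathrm{pt})$; alternatively one may invoke the general Graham--Kumar / Brion--Vergne $K$-theoretic localisation for $G/P$.
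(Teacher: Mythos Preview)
The paper does not supply its own proof of this theorem: in the main text it is simply quoted from \cite{Pa}. (There is a rough sketch in the commented-out material after \verb|\end{document}|, but it is not part of the paper proper.) So there is no ``paper's proof'' to compare against in the strict sense.

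That said, your outline is correct and is exactly the standard argument --- and it matches the draft sketch that survives in the source. Both decompose $\mu_{\gamma_1,\gamma_2}=\pi_*\circ i_*\circ p^*$ and compute each piece separately: $p^*$ is the trivial identification $\Rep(G_{\gamma_1})\otimes\Rep(G_{\gamma_2})\simeq\Rep(G_{\gamma_1,\gamma_2})$; $i_*$ is multiplication by the $K$-theoretic Euler class of the normal bundle (the draft phrases this as the Koszul resolution of $\cO_{M_{\gamma_1,\gamma_2}}$, which is the same thing); and $\pi_*$ produces the shuffle sum with the denominator. The only cosmetic difference is in the justification of $\pi_*$: you invoke Atiyah--Bott localisation / Demazure's formula on $G_\gamma/G_{\gamma_1,\gamma_2}$, whereas the draft sketch goes through Borel--Weil--Bott plus the Weyl character formula to obtain the same shuffle expression. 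These are two names for the same computation, and either route is fine. Your final paragraph correctly flags the one genuine point that needs care --- cancellation of the apparent poles after symmetrisation --- and the per-vertex reduction to the classical $K_0^T(\Gr(k,n))\to K_0^T(\mathrm{pt})$ pushforward is an adequate way to handle it.
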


\begin{remark} \label{remark on embedding} Note that the substitution  $z_{i,\alpha}\mapsto e^{x_{i,\alpha}}$ (Chern character) defines an injective map $\cR \hookrightarrow  \hat{\cH}$ of $\bbZ _{\geq 0}^I$-graded spaces. This map is additive, but is not a homomorphism of algebras.
Using this change of variables we may and will consider $\cR _\gamma$ as a subspace of $\hat{\cH }_\gamma$. Then for $f_1 (x'_{i,\alpha})\in \cR _{\gamma _1}\subset \hat{\cH }_{\gamma _1}$, $f_2(x''_{i,\alpha})\in \cR _{\gamma _2}\subset \hat{\cH }_{\gamma _2}$ their product $f_1\cdot_{\cR} f_2:=\mu(f_1,f_2)\in \cR _{\gamma _1+\gamma _2}\subset \hat{\cH}_{\gamma _1+\gamma _2}$ is the function
\begin{equation}\label{formula for mult in k0-hall}
%f_1 (x'_{i,\alpha})\cdot _{\cR}f_2(x''_{i,\alpha})=
\sum _{i\in I}\sum _{(\gamma _1^i,\gamma _1^i){\rm -shuffles}}
 f_1(x'_{i,\alpha})f_2(x''_{i,\alpha })\frac
{\prod_{i,j\in I}\prod_{\alpha _1=1}^{\gamma _1^i}\prod_{\alpha _2=1}^{\gamma _2^j}(1-e^{x'_{i,\alpha _1}-x''_{j,\alpha _2}})^{a_{ij}}}
{\prod_{i\in I}\prod_{\alpha _1=1}^{\gamma _1^i}\prod_{\alpha _2=1}^{\gamma _2^i}(1-e^{x'_{i,\alpha _1}-x''_{i,\alpha _2}})}.
\end{equation}
Thus the formula \eqref{formula for mult in k0-hall} is obtained from the formula \eqref{formula-for-multiplication} by replacing each factor $x''-x'$ in the numerator and the denominator by the factor $(1-e^{x'-x''})$.
\end{remark}

\section{Review of twisted graded algebras following \cite{Zh}}\label{sec twist}

We review the notion of a twist of a graded algebra by an automorphism and the corresponding equivalences of categories following \cite{Zh}. Let $\Gamma$ be a semi-group (not necessarily commutative) and let
$$A=\oplus _{g\in \Gamma}A_g$$
be a $\Gamma$-graded algebra.

\begin{definition}\label{defn of a twist} \cite{Zh} A set of degree
preserving linear automorphisms of $A$, say $\sigma =\{ \sigma _g \vert g\in \Gamma \}$, is called a {\it twisting system} of $A$ if
\begin{equation}\label{twist syst} \sigma _l(\sigma _s(x)y)=\sigma  _{sl}(x)\sigma _l(y)
\end{equation}
for all $g,s,l\in \Gamma$ and $x\in A_g,y\in A_s$.
\end{definition}

Given a twisting system $\sigma$ we can define the (left) twist ${}^\sigma A$ of $A$ as a $\Gamma$-graded algebra which equals $A$ as a graded vector space and has multiplication
\begin{equation}\label{left multipl}
x\circ y=\sigma _s(x)y
\end{equation}
for all $x\in A_g,$ $y\in A_s$.

Similarly, in Definition \ref{defn of a twist} we say that $\sigma$ is a {\it right twisting system} if instead of \eqref{twist syst} the following holds
\begin{equation}\label{twist syst r} \sigma _g(y\sigma _s(z))=\sigma _g(y)\sigma _{gs}(z)
\end{equation}
for all $g,s,l\in \Gamma$ and all $y\in A_s,z\in A_l$. In this case one can similarly define the right twist $A^\sigma$ of $A$ with multiplication
\begin{equation}\label{right multiplication}
y\bullet z=y\sigma _s(z).
\end{equation}
Both ${}^\sigma A$ and $A^\sigma$ are associative algebras due to \eqref{twist syst}, \eqref{twist syst r}.

For a graded algebra $B$ denote by $B\-\Mod$ (resp. $\Mod\-B$) the abelian categories of graded left (resp. right) $B$-modules (with degree preserving morphisms).

The main point of the above twisting constructions is the following remark in \cite{Zh}.

\begin{proposition}\cite{Zh}\label{equiv of cat by twists} Let $A$ be a $\Gamma$-graded algebra and let $\sigma$ be a twisting system (resp. right twisting system) of $A$. Then there is a natural equivalence of categories $A\-\Mod\simeq {}^\sigma A\-\Mod$ (resp.
$\Mod\-A\simeq \Mod\-A^\sigma$).
\end{proposition}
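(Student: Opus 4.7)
The strategy is to define the equivalence functor explicitly as the identity on graded vector spaces, changing only the action, and then exhibit a quasi-inverse of the same shape.

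Define $F\colon A\-\Mod \to {}^\sigma A\-\Mod$ by $F(M)=M$ as a graded vector space, equipped with the new action
\[
a\star m := \sigma_s(a)\cdot m \quad \text{for } a\in A_g,\ m\in M_s,
\]
and $F(f)=f$ on morphisms. The first thing to check is that $F(M)$ really is a ${}^\sigma A$-module, i.e.\ $(a\circ b)\star m = a\star(b\star m)$ for $a\in A_g$, $b\in A_l$, $m\in M_s$. Expanding, the left side is $\sigma_s(\sigma_l(a)b)\cdot m$ and the right side is $\sigma_{ls}(a)\cdot\sigma_s(b)\cdot m$ (noting that $\sigma_s(b)\in A_l$, so $\sigma_s(b)\cdot m\in M_{ls}$). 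These agree by the twisting-system identity \eqref{twist syst} applied with $x=a,\ y=b$ and with the roles of $s,l$ suitably relabelled. That $F$ acts correctly on morphisms is immediate: if $f\colon M\to M'$ is $A$-linear and degree-preserving, then $f(a\star m)=f(\sigma_s(a)m)=\sigma_s(a)f(m)=a\star f(m)$, so no modification of morphisms is needed.

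Next I would construct the quasi-inverse $G\colon {}^\sigma A\-\Mod \to A\-\Mod$ by the symmetric formula $a\cdot n := \sigma_s^{-1}(a)\star n$ for $n\in N_s$, using that each $\sigma_g$ is a linear automorphism. Verifying that this is an honest $A$-action amounts to showing $\sigma_s^{-1}(ab)\star n = \sigma_l(\sigma_{ls}^{-1}(a))\sigma_s^{-1}(b)\star n$, which after applying $\sigma_s$ and using associativity of $\circ$ reduces once more to \eqref{twist syst} (with $x=\sigma_{ls}^{-1}(a)$, $y=\sigma_s^{-1}(b)$); the $\sigma$'s on the right collapse via $\sigma_{ls}\sigma_{ls}^{-1}=\id$ and $\sigma_s\sigma_s^{-1}=\id$ to give $ab$.

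Finally, I would verify $FG=\id$ and $GF=\id$ on the nose. Both compositions leave the underlying graded vector space untouched, and the action is doubly twisted by $\sigma_s$ and $\sigma_s^{-1}$, which compose to the identity on each homogeneous component. The right-module statement is handled by the mirror argument with the right-twisting axiom \eqref{twist syst r} and multiplication \eqref{right multiplication}. The main obstacle is purely bookkeeping: keeping track of which degree indexes which $\sigma$, since the twisting identity is not symmetric in $s$ and $l$ and one must carefully match the degrees of the module element and of the algebra elements at each stage. Once this is set up, everything is a direct application of \eqref{twist syst}.
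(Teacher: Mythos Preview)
Your proposal is correct and follows exactly the same approach as the paper: define the functor as the identity on underlying graded spaces with the twisted action $a\star m=\sigma_s(a)m$ for $m\in M_s$, which is precisely what the paper records (it only sketches the construction and refers to \cite[Thm 3.1]{Zh} for the verification). Your write-up simply fills in the details the paper omits---checking associativity via \eqref{twist syst}, building the quasi-inverse with $\sigma_s^{-1}$, and observing that $FG$ and $GF$ are the identity on the nose.
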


\begin{proof} 
For the convenience of the reader we recall the construction of equivalences here. 
The equivalences are defined as follows (see \cite[Thm 3.1]{Zh} for details): Given a graded left (resp. right) $A$-module $M=\oplus _{g\in \Gamma}M_g$ one defines the structure of a left ${}^\sigma A$-module (resp. right $A^\sigma$-module) on $M$ by the formula
$$a\circ m=\sigma _s(a)m \quad \text{(resp. $m\bullet a=m\sigma _s(a)$)}$$
for $a\in A$ and $m\in M_s$.
\end{proof}

\begin{example} \label{example of commut gamma}Suppose that $\Gamma $ is a semigroup and $A$ is a $\Gamma $-graded algebra. Let $\Aut(A)$ denote the group of degree preserving algebra automorphisms of $A$. Suppose that we are given a homomorphism of semigroups
$$\sigma :\Gamma  \to \Aut(A),\quad g \mapsto \sigma _g.$$
Then the collection $\{ \sigma _g\}$ is a right twisting system of $A$. If the semigroup $\Gamma $ is abelian then $\{ \sigma _g\}$ is also a twisting system of $A$; hence in this case we have both the left and right twists of $A$ defined and we have the equivalences of categories in Proposition \ref{equiv of cat by twists}.
\end{example}

\section{Twists of $\hat{\cH}$}\label{twists of hat h}

Let $Q$ be a quiver. In the notation of Section \ref{sec twist} let the semi-group $\Gamma$ be the group $\bbZ ^I$. We consider the algebra $\hat{\cH}=\oplus _{\gamma \in \bbZ ^I}\hat{\cH }_{\gamma}$ as a $\bbZ ^I$-graded algebra (with $\hat{\cH}_\gamma=0$ for $\gamma\not\in \bbZ_{\geq 0}^I$) and $\hat{\cH}\-\Mod$ and $\Mod\-\hat{\cH}$ denote the categories of left and right $\bbZ ^I$-graded $\hat{\cH}$-modules respectively.

Recall that $\hat{\cH}_\gamma$ is canonically identified with the tensor product of rings of symmetric power series (cf. Remark \ref{product in completed})
$$\hat{\cH}_\gamma =\bigotimes _{i\in I}\bbQ [[(x_{i,\alpha})_{\alpha \in \{1,...,\gamma ^i\}}]]^{S_{\gamma _i}}$$
Denote
$${\bf x}_{\gamma ^i}:=x_{i,1}+...+x_{i,\gamma ^i}.$$

Assume that for each $i\in I$ we are given an additive function $l_i:\bbZ ^I \to \bbZ$. Then for $\tau  \in \bbZ ^I$ and $\gamma \in \bbZ ^I_{\geq 0}$ define
$$a^\gamma _\tau  :=\prod _{i\in I}\exp ({\bf x}_{\gamma ^i})^{l_i(\tau)}.$$
Clearly $a^\gamma _\tau$ is $S_\gamma$-invariant and hence belongs to $\hat{\cH}_\gamma$. We have
\begin{align}\label{twisting homom}
a_{\tau}^{\gamma_1}({\bf x'})a_{\tau}^{\gamma_2}({\bf x''})&=
a_{\tau}^{\gamma_1+\gamma_2}({\bf x'},{\bf x''}),\\\nonumber
a_{\tau_1}^\gamma a_{\tau_2}^\gamma&=a_{\tau_1+\tau_2}^\gamma.
\end{align}
For each $\tau \in \bbZ ^I$ define a degree preserving map $\sigma _\tau :\hat{\cH}\to \hat{\cH}$
such that
$$\sigma _\tau (f):=a_\tau ^\gamma f=\left(\prod _{i\in I}\exp ({\bf x}_{\gamma ^i})^{l_i(\tau)}\right)f\ \  \text{for $f\in \hat{\cH } _\gamma$}.$$

\begin{lemma}\label{lem:grouphom} For each $\tau \in \bbZ ^I$ the map $\sigma _\tau$ is an algebra automorphism of $\hat{\cH}$. The correspondence
$\tau \mapsto \sigma _\tau$ is a group homomorphism $\sigma :\bbZ ^I\to \Aut(\hat{\cH})$.
\end{lemma}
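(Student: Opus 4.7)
The plan is to verify both claims by a direct calculation with the explicit shuffle formula for the product in $\hat{\cH}$. Since $\sigma_\tau$ is by construction a degree-preserving $\bbQ$-linear endomorphism of $\hat{\cH}$, the only nontrivial points are (i) that it is multiplicative, and (ii) the group law $\sigma_{\tau_1}\sigma_{\tau_2}=\sigma_{\tau_1+\tau_2}$. I would dispose of (ii) and the bijectivity of $\sigma_\tau$ first, using the second identity in \eqref{twisting homom}: for $f\in\hat{\cH}_\gamma$,
$$\sigma_{\tau_1}(\sigma_{\tau_2}(f))=a^\gamma_{\tau_1}\,a^\gamma_{\tau_2}\,f=a^\gamma_{\tau_1+\tau_2}\,f=\sigma_{\tau_1+\tau_2}(f),$$
and specializing to $\tau_2=-\tau_1$ exhibits $\sigma_{-\tau}$ as a two-sided inverse of $\sigma_\tau$. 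Thus once each $\sigma_\tau$ is shown to be a ring homomorphism, the group-homomorphism property is automatic.

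The core of the argument is multiplicativity: for $f_1\in\hat{\cH}_{\gamma_1}$ and $f_2\in\hat{\cH}_{\gamma_2}$ one must check
$$a^{\gamma_1+\gamma_2}_\tau\,(f_1\cdot f_2)\;=\;(a^{\gamma_1}_\tau f_1)\cdot (a^{\gamma_2}_\tau f_2).$$
By Remark \ref{product in completed}, the right-hand side is computed by the shuffle formula \eqref{formula-for-multiplication} applied to $a^{\gamma_1}_\tau f_1$ and $a^{\gamma_2}_\tau f_2$. The key observation is that any $(\gamma_1^i,\gamma_2^i)$-shuffle partitions the global variable set $\{x_{i,1},\ldots,x_{i,\gamma^i}\}$ into a primed block of size $\gamma_1^i$ and a doubly-primed block of size $\gamma_2^i$, so that $\mathbf{x}'_{\gamma_1^i}+\mathbf{x}''_{\gamma_2^i}=\mathbf{x}_{\gamma^i}$ independently of the shuffle. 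This is exactly the first line of \eqref{twisting homom}, and it gives
$$a^{\gamma_1}_\tau(\mathbf{x}')\,a^{\gamma_2}_\tau(\mathbf{x}'')=\prod_{i\in I}\exp\!\bigl(\mathbf{x}'_{\gamma_1^i}+\mathbf{x}''_{\gamma_2^i}\bigr)^{l_i(\tau)}=a^{\gamma_1+\gamma_2}_\tau$$
in every summand of the shuffle sum. Since the rational factor appearing in \eqref{formula-for-multiplication} is unaffected by $a^{\gamma_1}_\tau a^{\gamma_2}_\tau$, this common factor $a^{\gamma_1+\gamma_2}_\tau$ can be pulled out of the sum, yielding $a^{\gamma_1+\gamma_2}_\tau(f_1\cdot f_2)=\sigma_\tau(f_1\cdot f_2)$.

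I do not expect a serious obstacle here. The only point requiring care is the notational bookkeeping of how the primed and doubly-primed labels are redistributed among the variables $x_{i,\alpha}$ under a shuffle; once that is set up, the shuffle-invariance of the total power sums $\mathbf{x}_{\gamma^i}$ makes the verification essentially tautological, and both parts of the lemma follow directly from the two identities in \eqref{twisting homom}.
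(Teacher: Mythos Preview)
Your proposal is correct and follows essentially the same approach as the paper. The paper's proof is terser but identical in substance: it cites Remark~\ref{product in completed} for the shuffle formula, then says multiplicativity follows from the $S_\gamma$-invariance of $a^\gamma_\tau$ together with the first identity in \eqref{twisting homom}, and the group law from the second identity in \eqref{twisting homom}; your write-up simply unpacks these citations explicitly.
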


\begin{proof} By Remark \ref{product in completed} the product $f_1\cdot f_2$ of $f_1\in \hat{\cH}_{\gamma _1}$, $f_2\in \hat{\cH}_{\gamma _2}$ is given by the formula \eqref{formula-for-multiplication}. The first assertion now follows from the $S_\gamma$-invariance of $a^\gamma _\tau$ and the first formula in \eqref{twisting homom}. The second assertion follows from the second formula in \eqref{twisting homom}.
\end{proof}

\begin{corollary} \label{cor on existence of two twists} The homomorphism of groups $\sigma :\bbZ ^I\to \Aut(\hat{\cH })$ gives rise to the left and right twists, ${}^\sigma \hat{\cH}$ and $\hat{\cH}^\sigma$ of the $\bbZ ^I$-graded algebra $\hat{\cH}$.
\end{corollary}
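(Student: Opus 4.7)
The plan is to reduce the corollary directly to Example \ref{example of commut gamma}, applied to the group homomorphism provided by Lemma \ref{lem:grouphom}. Since Example \ref{example of commut gamma} has already done the abstract bookkeeping for us, the task is essentially to check that its hypotheses hold in our situation; there should be no serious obstacle.

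First I would observe that by Lemma \ref{lem:grouphom} the correspondence $\tau \mapsto \sigma_\tau$ is a homomorphism of groups $\sigma : \bbZ^I \to \Aut(\hat{\cH})$, where each $\sigma_\tau$ is a degree preserving algebra automorphism of the $\bbZ^I$-graded algebra $\hat{\cH}$. The indexing semigroup $\Gamma = \bbZ^I$ is abelian, so the hypotheses of Example \ref{example of commut gamma} are satisfied verbatim.

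Second, I would verify the twisting system axioms from Definition \ref{defn of a twist} just to make the reader comfortable (although this is subsumed by Example \ref{example of commut gamma}). For the right twisting system axiom \eqref{twist syst r}, given $y \in \hat{\cH}_s$ and $z \in \hat{\cH}_l$, one computes
\[
\sigma_\tau(y \cdot \sigma_s(z)) = \sigma_\tau(y) \cdot \sigma_\tau(\sigma_s(z)) = \sigma_\tau(y) \cdot \sigma_{\tau+s}(z),
\]
using that $\sigma_\tau$ is an algebra automorphism and that $\sigma$ is a group homomorphism. The left twisting system axiom \eqref{twist syst} is identical since $\bbZ^I$ is commutative and we may switch the roles of the two factors.

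Finally, invoking Example \ref{example of commut gamma} (or equivalently the constructions \eqref{left multipl} and \eqref{right multiplication}) we obtain both the left twist ${}^\sigma \hat{\cH}$ and the right twist $\hat{\cH}^\sigma$ as associative $\bbZ^I$-graded algebras. The hardest step, if any, is really Lemma \ref{lem:grouphom} itself; once that is in hand the corollary is a formal consequence, and I do not expect any genuine difficulty beyond bookkeeping.
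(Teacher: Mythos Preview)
Your proposal is correct and follows exactly the same approach as the paper: the paper's proof is the single sentence ``This follows from the fact that the group $\bbZ^I$ is commutative as explained in Example \ref{example of commut gamma}.'' Your additional explicit verification of the twisting axioms is harmless but superfluous, since it is precisely what Example \ref{example of commut gamma} already records.
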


\begin{proof} This follows from the fact that the group $\bbZ ^I$ is commutative as explained in Example \ref{example of commut gamma}.
\end{proof}

We will also need a small tweak of the above construction. Namely, let
$$\mu :\bbZ ^I\times \bbZ ^I\to \bbZ /2\bbZ$$
be a bilinear form (where $\bbZ/2\bbZ$ is considered with the multiplicative operation). Put
$$\tilde{a}_\tau ^\gamma :=a_\tau ^\gamma \mu (\tau ,\gamma).$$
We have an analogue of \eqref{twisting homom}
\begin{align}\label{twisting homom tilde}
\tilde{a}_{\tau}^{\gamma_1}({\bf x'})\tilde{a}_{\tau}^{\gamma_2}({\bf x''})&=
\tilde{a}_{\tau}^{\gamma_1+\gamma_2}({\bf x'},{\bf x''}),\\\nonumber
\tilde{a}_{\tau_1}^\gamma \tilde{a}_{\tau_2}^\gamma&=\tilde{a}_{\tau_1+\tau_2}^\gamma.
\end{align}
For each $\tau \in \bbZ ^I$ define the degree preserving map $\tilde{\sigma} _\tau :\hat{\cH}\to \hat{\cH}$
as follows
$$\tilde{\sigma} _\tau (f):=\tilde{a}_\tau ^\gamma f=\left(\prod _{i\in I}\exp ({\bf x}_{\gamma ^i})^{l_i(\tau)}\right)\mu(\tau ,\gamma) f\ \  \text{for $f\in \cH _\gamma$}.$$
Similarly as in Lemma \ref{lem:grouphom}, Corollary \ref{cor on existence of two twists} we obtain the following lemma.

\begin{lemma} \label{tilde-sigma} (1) For each $\tau \in \bbZ ^I$ the map $\tilde{\sigma} _\tau$ is an algebra automorphism of $\hat{\cH}$. The correspondence
$\tau \mapsto \tilde{\sigma} _\tau$ is a group homomorphism $\tilde{\sigma} :\bbZ ^I\to \Aut(\hat{\cH})$.

(2) The homomorphism of groups $\tilde{\sigma} :\bbZ ^I\to \Aut(\hat{\cH })$ gives rise to the left and right twists, ${}^{\tilde{\sigma}} \hat{\cH}$ and $\hat{\cH}^{\tilde{\sigma}}$ of the $\bbZ ^I$-algebra $\hat{\cH}$.
\end{lemma}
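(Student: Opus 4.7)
The plan is to mimic the proof of Lemma \ref{lem:grouphom} almost verbatim, with the only new ingredient being verification that the sign factor $\mu(\tau,\gamma)$ is well-behaved in both arguments. Since the bilinear form $\mu$ takes values in $\bbZ/2\bbZ$ viewed multiplicatively (i.e.\ in $\{\pm 1\}$), it behaves as a central scalar and commutes with multiplication in $\hat{\cH}$, so the whole proof reduces to two bookkeeping identities.

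First I would verify the two formulas in \eqref{twisting homom tilde}. The first identity $\tilde{a}_{\tau}^{\gamma_1}({\bf x'})\tilde{a}_{\tau}^{\gamma_2}({\bf x''})=\tilde{a}_{\tau}^{\gamma_1+\gamma_2}({\bf x'},{\bf x''})$ follows from the corresponding identity in \eqref{twisting homom} together with linearity of $\mu$ in its second argument, which gives $\mu(\tau,\gamma_1)\mu(\tau,\gamma_2)=\mu(\tau,\gamma_1+\gamma_2)$. The second identity $\tilde{a}_{\tau_1}^\gamma\tilde{a}_{\tau_2}^\gamma=\tilde{a}_{\tau_1+\tau_2}^\gamma$ follows analogously from linearity of $\mu$ in its first argument.

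For part (1), note that $\tilde{a}_\tau^\gamma$ is $S_\gamma$-invariant (since $a_\tau^\gamma$ is and the sign factor is a constant), so $\tilde{\sigma}_\tau$ preserves $\hat{\cH}_\gamma$. That $\tilde{\sigma}_\tau$ is an algebra homomorphism then follows exactly as in Lemma \ref{lem:grouphom}: the shuffle formula \eqref{formula-for-multiplication} for the product in $\hat{\cH}$ combined with the first identity in \eqref{twisting homom tilde} gives $\tilde{\sigma}_\tau(f_1\cdot f_2)=\tilde{\sigma}_\tau(f_1)\cdot\tilde{\sigma}_\tau(f_2)$. The second identity in \eqref{twisting homom tilde} directly yields $\tilde{\sigma}_{\tau_1+\tau_2}=\tilde{\sigma}_{\tau_1}\circ\tilde{\sigma}_{\tau_2}$, so $\tilde{\sigma}_0=\id$ and each $\tilde{\sigma}_\tau$ is invertible, giving the group homomorphism $\tilde{\sigma}:\bbZ^I\to\Aut(\hat{\cH})$.

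Part (2) is then automatic: since $\bbZ^I$ is abelian, Example \ref{example of commut gamma} applies to the homomorphism $\tilde{\sigma}$, so $\{\tilde{\sigma}_\tau\}$ is simultaneously a left and a right twisting system in the sense of Definition \ref{defn of a twist}, and both twists ${}^{\tilde{\sigma}}\hat{\cH}$ and $\hat{\cH}^{\tilde{\sigma}}$ are defined. There is no genuine obstacle here; the only point that requires care is treating $\mu(\tau,\gamma)$ consistently as a scalar $\pm 1$ so that bilinearity translates into the multiplicative identities above.
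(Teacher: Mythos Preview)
Your proposal is correct and follows exactly the approach the paper indicates: the paper simply says ``Similarly as in Lemma \ref{lem:grouphom}, Corollary \ref{cor on existence of two twists} we obtain the following lemma,'' and you have faithfully spelled out those details, with the only new ingredient being the bilinearity of $\mu$ needed to verify \eqref{twisting homom tilde}. There is nothing to add.
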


\section{Relation between the graded algebra $\cR$ and the twisted algebras $\hat{\cH}^\sigma$ and ${}^{\tilde{\sigma}} \hat{\cH}$ in case of a symmetric quiver}\label{rel between}
Let $a_{ij}\in \ZZ_{\geq 0}$, $a_{ij}=a_{ji}$, $i,j\in I$.
In the notation of \S \ref{twists of hat h} define the additive maps
$l_i:\bbZ ^I\to \bbZ$, $i\in I$ to be
$$l_i(\tau):=\tau ^i-\sum_{j\in I} a_{ij}\tau^j.$$
As explained in \S\ref{twists of hat h} this gives elements
$$a^\gamma _\tau  =\prod _{i\in I}\exp ({\bf x}_{\gamma ^i})^{\tau ^i-\sum_{j\in I} a_{ij}\tau^j}\in \hat{\cH}_\gamma $$
and defines a homomorphism of groups $\sigma :\bbZ ^I\to \Aut(\hat{\cH})$, where
$$\sigma _\tau (f)=a_\tau ^\gamma f=\left(\prod _{i\in I}\exp ({\bf x}_{\gamma ^i})^{\tau ^i-\sum_{j\in I} a_{ij}\tau^j}\right)f\ \  \text{for $f\in \hat{\cH }_\gamma$}.$$
By Corollary \ref{cor on existence of two twists} we obtain the  corresponding (right) twist
%$({}^\sigma \hat{\cH},\circ)$ and
$(\hat{\cH}^\sigma ,\bullet)$ of the $\bbZ ^I$-graded algebra $\hat{\cH}$, where
\begin{align}\label{special case of def mult}
%f_1\circ f_2=(a_{\gamma _2}^{\gamma _1}(x')f_1)\cdot f_2=
%\left[\left(\prod _{i\in I}\exp ({\bf x}'_{\gamma _1 ^i})^{\gamma _2 ^i-\sum_{j\in I} a_{ij}\gamma _2^j}\right)f_1\right]\cdot f_2, \\\nonumber
f_1\bullet f_2=f_1\cdot (a_{\gamma _1}^{\gamma _2}(x'')f_2)=f_1\cdot \left[\left(\prod _{i\in I}\exp ({\bf x}''_{\gamma _2 ^i})^{\gamma _1 ^i-\sum_{j\in I} a_{ij}\gamma _1^j}\right)f_2\right]
\end{align}
for $f_1(x')\in \hat{\cH}_{\gamma _1}$, $f_2(x'')\in \hat{\cH}_{\gamma _2}$.

To define a left twist of $\hat{\cH}$ we will use 
\begin{align*}
b_{\tau}^\gamma(x) &:=(a_\tau ^\gamma(x))^{-1}=\prod _{i\in I}\exp ({\bf -x}_{\gamma ^i})^{\tau ^i-\sum_{j\in I} a_{ij}\tau^j},\\
\mu (\tau ,\gamma)&:=(-1)^{\sum _{i\in I}\gamma ^i\tau ^i+\sum _{i,j\in I}a_{ij}\gamma ^i\tau ^j},\\
\tilde{b}_{\tau}^\gamma&:=b_{\tau}^\gamma\mu(\tau,\gamma)\in \hat{\cH}_\gamma. 
\end{align*}
By Lemma \ref{tilde-sigma} we obtain the corresponding (left) twist $({}^{\tilde{\sigma}} \hat{\cH},\circ)$
%and $(\hat{\cH}^{\tilde{\sigma}} ,\bullet)$
of the $\bbZ ^I$-graded algebra $\hat{\cH}$, where
\begin{align}\label{special case of def mult}
f_1\circ f_2=(\tilde{b}_{\gamma _2}^{\gamma _1}(x')f_1)\cdot f_2
%,\quad  f_1\bullet f_2=f_1\cdot (\tilde{a}_{\gamma _1}^{\gamma _2}f_2)
\end{align}
for $f_1(x')\in \hat{\cH}_{\gamma _1}$, $f_2(x'')\in \hat{\cH}_{\gamma _2}$.

Our main observation is the following theorem.

\begin{theorem} \label{main thm} Assume that the quiver $Q$ is symmetric, i.e. $a_{ij}=a_{ji}$ for all $i,j\in I$. Then there exist injective degree preserving homomorphisms of $\bbZ ^I$-graded algebras
$$h^\sigma :\cR \to \hat{\cH}^\sigma,\quad {}^{\tilde{\sigma}}h:\cR \to
{}^{\tilde{\sigma}}\hat{\cH}.$$
\end{theorem}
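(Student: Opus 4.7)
The plan is to construct $h^\sigma$ as the Chern character embedding $h:\cR\hookrightarrow\hat{\cH}$ of Remark \ref{remark on embedding} corrected by multiplication by a Weyl-invariant unit $\phi_\gamma\in\hat{\cH}_\gamma^{\times}$: for $f\in\cR_\gamma$, put $h^\sigma(f):=\phi_\gamma\cdot h(f)$. Since $\phi_\gamma$ is invariant under the shuffle symmetries $\prod_i S_{\gamma^i}$, it commutes with the symmetrization implicit in the shuffle formulas \eqref{formula-for-multiplication} and \eqref{formula for mult in k0-hall}, so the homomorphism condition $h^\sigma(f_1)\bullet h^\sigma(f_2)=h^\sigma(f_1\cdot_{\cR}f_2)$ reduces to the identity of rational functions
\[
\phi_\gamma(x',x'')\, R_\cR(x',x'') \;=\; \phi_{\gamma_1}(x')\, \phi_{\gamma_2}(x'')\, a^{\gamma_2}_{\gamma_1}(x'')\, R_\cH(x',x''),
\]
where $R_\cR$ and $R_\cH$ denote the rational kernels on the right of \eqref{formula for mult in k0-hall} and \eqref{formula-for-multiplication}.

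The key trick is the factorization
\[
1 - e^{x'-x''} \;=\; (x''-x')\, e^{-(x''-x')/2}\, \psi(x''-x'),\qquad \psi(u):=\tfrac{2\sinh(u/2)}{u}\in 1+u^2\QQ[[u^2]],
\]
which writes each factor of $R_\cR$ as the corresponding factor of $R_\cH$ times an exponential and the even unit $\psi$. A direct accounting using $a_{ij}=a_{ji}$ shows that the product of all exponential contributions in $R_\cR/R_\cH$ collapses to $(a^{\gamma_2}_{\gamma_1}(x''))^{1/2}(a^{\gamma_1}_{\gamma_2}(x'))^{-1/2}$, while the $\psi$-contributions assemble into
\[
\Psi(x',x'')\;=\;\prod_{\text{cross pairs}}\psi(x'_{i,\alpha}-x''_{j,\beta})^{a_{ij}-\delta_{ij}}.
\]
Combined with the twist $a^{\gamma_2}_{\gamma_1}(x'')$ supplied by $\sigma$, this dictates the Ansatz
\[
\phi_\gamma(x)\;=\;\exp\!\Bigl(\tfrac{1}{2}\sum_i l_i(\gamma)\,{\bf x}_{\gamma^i}\Bigr)\cdot\Phi_\gamma(x)^{-1},\qquad
\Phi_\gamma(x):=\prod_{\{(i,\alpha),(j,\beta)\}}\psi(x_{i,\alpha}-x_{j,\beta})^{a_{ij}-\delta_{ij}},
\]
the product being over unordered pairs of distinct indexed variables in $x$. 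Additivity of $l_i$ in $\gamma$ handles the exponential factor, and the cocycle $\Phi_\gamma(x',x'')=\Phi_{\gamma_1}(x')\,\Phi_{\gamma_2}(x'')\,\Psi(x',x'')$ holds by simply partitioning unordered pairs in $x=(x',x'')$ into those within $x'$, those within $x''$, and cross pairs.

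Injectivity of $h^\sigma$ is then immediate: $h$ is injective on each graded piece by Remark \ref{remark on embedding}, and $\phi_\gamma$ is a unit in $\hat{\cH}_\gamma$ (both the exponential and the powers of $\psi$ are units). For ${}^{\tilde{\sigma}}h:\cR\to{}^{\tilde{\sigma}}\hat{\cH}$ the same strategy works with the left-twist product, together with an extra sign correction $\epsilon(\gamma)\in\{\pm 1\}$ multiplying $\phi_\gamma$ to absorb the sign $\mu(\tau,\gamma)$ appearing in $\tilde{\sigma}$. Existence of $\epsilon$ satisfying the required cocycle $\epsilon(\gamma_1+\gamma_2)/\epsilon(\gamma_1)\epsilon(\gamma_2)=\mu(\gamma_2,\gamma_1)$ follows from the symmetry of $Q$: the exponent $\sum_i\gamma_1^i\gamma_2^i+\sum_{i,j}a_{ij}\gamma_1^i\gamma_2^j$ of $\mu$ is a symmetric bilinear form modulo $2$, and $\epsilon$ is constructed exactly as $\psi$ is in the passage around \eqref{prop of psi}.

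The main obstacle is spotting the decomposition $1-e^{-u}=u\,e^{-u/2}\psi(u)$: it is precisely what splits $R_\cR/R_\cH$ into an exponential piece separable in $(x',x'')$ (which can be absorbed by the monomial twist $a^{\gamma_2}_{\gamma_1}(x'')$) and an even piece $\Psi$ that factorizes combinatorially over pairs of variables (absorbed into the Weyl-invariant $\phi_\gamma$ via the cocycle). Once this identity is noticed, the guess for $\Phi_\gamma$, the verification of its cocycle, and the sign bookkeeping for ${}^{\tilde{\sigma}}h$ are all essentially formal.
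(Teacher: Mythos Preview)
Your proposal is correct and follows essentially the same strategy as the paper: correct the Chern character embedding by a $\prod_i S_{\gamma^i}$-invariant unit $\phi_\gamma\in\hat{\cH}_\gamma^\times$, and reduce the homomorphism property to a multiplicative cocycle identity for the kernels $R_{\cR}/R_{\cH}$. The paper's unit is $\eta_\gamma=k_\gamma(x)/k_\gamma(e^x)$ (and $\tilde\eta_\gamma=k_\gamma(x)/k_\gamma(e^{-x})$ for the left twist), which upon expanding $e^{v}-e^{u}=e^{(u+v)/2}(v-u)\psi(v-u)$ differs from your $\phi_\gamma$ only by a factor of the form $\prod_i \exp\bigl(\tfrac12(a_{ii}-1){\bf x}_{\gamma^i}\bigr)$; such a factor separates as a product over $x'$ and $x''$ and hence contributes trivially to the cocycle, so both choices work. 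For ${}^{\tilde\sigma}h$ the paper absorbs the sign $\mu$ automatically by replacing $K_\gamma(x)$ with $K_\gamma(-x)$, whereas you introduce an explicit $\epsilon(\gamma)$; your existence argument for $\epsilon$ is fine (one needs a $\ZZ/2$-valued quadratic refinement of the symmetric form $B(\gamma_1,\gamma_2)=\sum_i\gamma_1^i\gamma_2^i+\sum_{i,j}a_{ij}\gamma_1^i\gamma_2^j$, which always exists, e.g.\ via $q(\gamma)=\sum_{i<j}B(e_i,e_j)\gamma^i\gamma^j+\sum_i B(e_i,e_i)\binom{\gamma^i}{2}$), though the analogy with \eqref{prop of psi} is not exact since there one has $\beta(\gamma,\gamma)=0$.
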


%\vspace{-1.5cm}

\begin{proof} By Remark \ref{remark on embedding} we may and will consider the algebra $\cR$ as a graded subspace of $\hat{\cH}$ (via the Chern character map $ch: \cR \hookrightarrow \hat{\cH}$) where the multiplication $f_1\cdot _{\cR}f_2$ of $f_1(x'_{i,\alpha})\in \cR _{\gamma _1}$ and $f_2(x''_{i,\alpha})\in \cR _{\gamma _2}$ is given by the formula
\begin{equation}\label{mult cdot r}
%f_1 (x'_{i,\alpha})\cdot _{\cR}f_2(x''_{i,\alpha})=
\sum _{i\in I}\sum _{(\gamma _1^i,\gamma _1^i){\rm -shuffles}}
 f_1(x'_{i,\alpha})f_2(x''_{i,\alpha })\frac
{\prod_{i,j\in I}\prod_{\alpha _1=1}^{\gamma _1^i}\prod_{\alpha _2=1}^{\gamma _2^j}(1-e^{x'_{i,\alpha _1}-x''_{j,\alpha _2}})^{a_{ij}}}
{\prod_{i\in I}\prod_{\alpha _1=1}^{\gamma _1^i}\prod_{\alpha _2=1}^{\gamma _2^i}(1-e^{x'_{i,\alpha _1}-x''_{i,\alpha _2}})}.
\end{equation}

%Then the product in $\hat{\cH}$ is: for $f(x'_1,...,x'_{\gamma _1})\in \cH _{\gamma _1}$, $f_2(x''_1,...,x''_{\gamma _2})\in \cH _{\gamma _2}$ one has
%$$f_1\cdot f_2(x'_1,...,x'_{\gamma _1},x''_1,...,x''_{\gamma _2})=\sum _{(\gamma _1,\gamma _2)\text{-shuffles}}
%f(x'_1,...,x'_{\gamma _1})f_2(x''_1,...,x''_{\gamma _2})\prod_{\alpha _1=1}^{\gamma _1}\prod_{\alpha _2=1}^{\gamma _2}(x''_{\alpha _2}-x'_{\alpha _1})^{m-1}
%$$
Choose a linear order on the finite set $I$ and let  $<$ be the induced lexicographic order on $I\times \ZZ_{>0}$.
For any $\gamma \in \bbZ _{\geq 0}^I$ we put
\begin{align*}\label{eq:kk}
k_\gamma((x_{i,\alpha})_{i\in I,1\leq\alpha\leq \gamma^i})&:=
\frac
{\left.\prod_{i,j\in I}\prod_{\alpha _1=1}^{\gamma^i}\prod_{\alpha _2=1}^{\gamma^j}\right|_{(j,\alpha_2)>(i,\alpha_1)}(x_{j,\alpha _2}-x_{i,\alpha _1})^{a_{ij}}}
{\left.\prod_{i\in I}\prod_{\alpha _1=1}^{\gamma^i}\prod_{\alpha _2=1}^{\gamma^i}\right|_{\alpha_2>\alpha_1}(x_{i,\alpha _2}-x_{i,\alpha _1})},\\
%\end{equation*}
%\begin{equation*}\label{eq:KK}
K_\gamma((x_{i,\alpha})_{i\in I,1\leq\alpha\leq \gamma^i})&:=
k_\gamma((e^{x_{i,\alpha}})_{i\in I,1\leq\alpha\leq \gamma^i}),\\
%\end{equation*}
%\begin{equation*}\label{def of eta}
\eta _\gamma ((x_{i,\alpha})_{i\in I,1\leq\alpha\leq \gamma^i})&:=
\frac{k_\gamma((x_{i,\alpha})_{i\in I,1\leq\alpha\leq \gamma^i})}
{K_\gamma((x_{i,\alpha})_{i\in I,1\leq\alpha\leq \gamma^i})}.
\end{align*}
Then $\eta_\gamma$ is  $S_\gamma$-invariant %function  on the space of the product of diagonal $\gamma_i \times \gamma_i$-matrices, $i\in I$
 (as $k_\gamma$, $K_\gamma$ are invariant up to the same sign). (Notice that $\eta _\gamma$ is a power series with the constant coefficient 1.)

When we use the notation $k_{\gamma _1+\gamma _2}((x'_{i,\alpha})_{i\in I,1\leq\alpha\leq \gamma^i_1},(x''_{i,\alpha})_{i\in I,1\leq\alpha\leq \gamma^i_2})$ it is understood that for a fixed index $i\in I$ the variables are ordered as $(x'_{i,1},...,x'_{i,\gamma ^i_1},x''_{i,1},...,x''_{i,\gamma ^i_2})$. Similarly for $K_{\gamma _1+\gamma _2}$ and $\eta _{\gamma _1+\gamma _2}$.

Moreover, let us denote
\begin{align*}
k_{\gamma_1,\gamma_2}((x'_{i,\alpha})_{i\in I,1\leq\alpha\leq \gamma_1^i},(x''_{i,\alpha})_{i\in I,1\leq\alpha\leq \gamma_2^i})&:=
\frac
{\prod_{i,j\in I}\prod_{\alpha _1=1}^{\gamma _1^i}\prod_{\alpha _2=1}^{\gamma _2^j}(x''_{j,\alpha _2}-x'_{i,\alpha _1})^{a_{ij}}}
{\prod_{i\in I}\prod_{\alpha _1=1}^{\gamma _1^i}\prod_{\alpha _2=1}^{\gamma _2^i}(x''_{i,\alpha _2}-x'_{i,\alpha _1})},\\
K_{\gamma_1,\gamma_2}((x'_{i,\alpha})_{i\in I,1\leq\alpha\leq \gamma_1^i},(x''_{i,\alpha})_{i\in I,1\leq\alpha\leq \gamma_2^i})&:=
k_{\gamma_1,\gamma_2}((e^{x'_{i,\alpha}})_{i\in I,1\leq\alpha\leq \gamma_1^i},(e^{x''_{i,\alpha}})_{i\in I,1\leq\alpha\leq \gamma_2^i}).
\end{align*}
We further denote
\begin{multline*}
L_{\gamma_1,\gamma_2}((x'_{i,\alpha})_{i\in I,1\leq\alpha\leq \gamma_1^i},(x''_{i,\alpha})_{i\in I,1\leq\alpha\leq \gamma_2^i}):=
\frac
{\prod_{i,j\in I}\prod_{\alpha _1=1}^{\gamma _1^i}\prod_{\alpha _2=1}^{\gamma _2^j}(1-e^{x'_{i,\alpha _1}-x''_{j,\alpha _2}})^{a_{ij}}}
{\prod_{i\in I}\prod_{\alpha _1=1}^{\gamma _1^i}\prod_{\alpha _2=1}^{\gamma _2^i}(1-e^{x'_{i,\alpha _1}-x''_{i,\alpha _2}})}.
\end{multline*}

%\begin{remark}
When it is clear from the context we  slightly abuse the notation and write  $k_{\gamma_1,\gamma_2}$ (or $k_{\gamma_1,\gamma_2}(x',x'')$)  for $k_{\gamma_1,\gamma_2}((x'_{i,\alpha})_{i\in I,1\leq\alpha\leq \gamma_1^i},(x''_{i,\alpha})_{i\in I,1\leq\alpha\leq \gamma_2^i})$. Similarly for $K_{\gamma_1,\gamma_2}$, $L_{\gamma_1,\gamma_2}$.
%\end{remark}

\medskip

Define the additive map $h^{\sigma}:\cR \to \hat{\cH}^{\sigma}$ to be:
$$f\mapsto \eta_\gamma f,\quad \text{for}\quad f\in \cR _\gamma \subset \hat{\cH}_\gamma$$
%The proof of the next lemma is similar **Check!!!** to the proof of Lemma \ref{relation with twisted for simpliest quiver}.
We claim that the map $h^{\sigma}$ is a (injective) homomorphism of graded algebras
$$h^{\sigma}:\cR \to \hat{\cH}^{\sigma}.$$
Indeed, let $f_1((x'_{i,\alpha})_{i\in I,1\leq\alpha\leq \gamma_1^i})\in \cR _{\gamma _1}$, $f_2((x''_{i,\alpha})_{i\in I,1\leq\alpha\leq \gamma_2^i})\in \cR _{\gamma _2}$. By Remark \ref{remark on embedding} we need to check that

\begin{equation}\label{eq:tocheck}
(\eta_{\gamma_1} f_1)\bullet
(\eta_{\gamma_2} f_2)
=
\eta_{\gamma_1+\gamma_2}(f_1\cdot _{\cR} f_2).
\end{equation}

The LHS of \eqref{eq:tocheck} is by definition equal to
$$
\sum_{l\in I}\sum_{(\gamma_1^l,\gamma_2^l)-\text{-shuffles}}(\eta_{\gamma_1} f_1)%((x'_{i,\alpha})_{i\in I,1\leq\alpha\leq \gamma_1^i})
(\eta_{\gamma_2} f_2)%((x''_{i,\alpha})_{i\in I,1\leq\alpha\leq \gamma_1^i})
k_{\gamma_1,\gamma_2}%((x'_{i,\alpha})_{i\in I,1\leq\alpha\leq \gamma_1^i}),((x''_{i,\alpha})_{i\in I,1\leq\alpha\leq \gamma_2^i}))
%\prod_{i\in I}\exp({\bf x}_{\gamma_2^i})^{\gamma_1^i-\sum_{j\in I} a_{ij}\gamma_1^j}
a_{\gamma _1}^{\gamma _2}(x'')
$$
which by Lemma \ref{lem:manipul} below (by \eqref{eq:mult-for-eta}, \eqref{eq:KKLL}, respectively) and the $S_{\gamma_1+\gamma_2}$-invariance of $\eta_{\gamma_1+\gamma_2}$ equals %(ommiting the variables)
\begin{align*}  \sum_{l\in I}&\sum_{(\gamma _1^l,\gamma _2^l)\text{-shuffles}}\eta_{\gamma_1+\gamma_2}
f_1
f_2K_{\gamma_1,\gamma_2}%\prod_{i\in I}%\exp({\bf x}_{\gamma_2^i})^{\gamma_1^i-\sum_{j\in I} a_{ij}\gamma_1^j}\\
a_{\gamma _1}^{\gamma _2}(x'')\\
&= \eta_{\gamma_1+\gamma_2}
\sum_{l\in I}\sum_{(\gamma^l _1,\gamma^l _2)\text{-shuffles}}f_1f_2L_{\gamma_1,\gamma_2}%\prod_{i\in I}%\exp({\bf x}_{\gamma_2^i})^{-\gamma_1^i+\sum_{j\in I}a_{ij}\gamma^j_1}\prod_{i\in I}\exp({\bf x}_{\gamma_2^i})^{\gamma_1^i-\sum_{j\in I} a_{ij}\gamma_1^j}
a_{\gamma _1}^{\gamma _2}(x'')(a_{\gamma _1}^{\gamma _2}(x''))^{-1}
\\
&=\eta_{\gamma_1+\gamma_2}\sum_{l\in I}\sum_{(\gamma^l _1,\gamma^l _2)\text{-shuffles}}f_1f_2L_{\gamma_1,\gamma_2}\\
&=\eta_{\gamma_1+\gamma_2}(f_1\cdot _{\cR} f_2)
\end{align*}
%where the second equality follows from the $S_{\gamma _1+\gamma _2}$-invariance of $\eta_{\gamma_1+\gamma_2}$.
This proves \eqref{eq:tocheck}.

%\subsubsection{} \underline{Definition of the homomorphism ${}^\sigma h$}
\medskip

The construction of the homomorphism  ${}^{\tilde{\sigma}} h$ is similar and we only sketch it.
In fact we only need a small modification of the previous argument. Similarly to the functions $k_\gamma$, $K_\gamma$ ... we define the new functions:
$$\tilde{k}_{\gamma}(x):=k_{\gamma}(x);\quad \tilde{K}_{\gamma}(x):=K_{\gamma}(-x);\quad
\tilde{\eta}_\gamma (x):=\tilde{k}_\gamma (x)/\tilde{K}_{\gamma} (x)$$
$$\tilde{k}_{\gamma _1,\gamma _2}(x',x''):=k_{\gamma _1,\gamma _2}(x',x'');\ \ \tilde{K}_{\gamma _1,\gamma _2}(x',x''):=K_{\gamma _1,\gamma _2}(-x',-x'')$$
$$\tilde{L}_{\gamma _1,\gamma _2}(x',x''):=L_{\gamma _1,\gamma _2}(x',x'').$$

Define an additive map ${}^{\tilde{\sigma}}h:\cR \to {}^{\tilde{\sigma}}\hat{\cH}$ as
$$f\mapsto \tilde{\eta }_\gamma f,\quad f\in \cR _\gamma.$$
Exactly as above (using Lemma \ref{lem:manipul'} below instead of Lemma \ref{lem:manipul}) one proves that ${}^{\tilde{\sigma}}h$ is a homomorphism of graded algebras. This completes the proof of the theorem.
\end{proof}
\begin{lemma}\label{lem:manipul}
%We write $k_{\gamma_1,\gamma_2}$ for $k_{\gamma_1,\gamma_2}((x'_{i,\alpha})_{i\in I,1\leq\alpha\leq \gamma_1^i},(x''_{i,\alpha})_{i\in I,1\leq\alpha\leq \gamma_2^i})$. Similarly for $K_{\gamma_1,\gamma_2}$.
The above functions are connected as follows:
\begin{align}\label{eq:mult-for-eta}
\eta_{\gamma_1+\gamma_2}((x'_{i,\alpha})_{i\in I,1\leq\alpha\leq \gamma_1^i},&(x''_{i,\alpha})_{i\in I,1\leq\alpha\leq \gamma_2^i})\\&=\nonumber
\eta_{\gamma_1} ((x'_{i,\alpha})_{i\in I,1\leq\alpha\leq \gamma_1^i})\eta_{\gamma_2}((x''_{i,\alpha})_{i\in I,1\leq\alpha\leq \gamma_2^i})
\frac{k_{\gamma_1,\gamma_2}}{K_{\gamma_1,\gamma_2}},%((x'_{i,\alpha})_{i\in I,1\leq\alpha\leq \gamma_1^i},(x''_{i,\alpha})_{i\in I,1\leq\alpha\leq \gamma_2^i}))^{\gamma_2^i-\sum_j a_{ij}\gamma_2^j}
\end{align}

\begin{equation}\label{eq:KKLL}
K_{\gamma_1,\gamma_2}((x'_{i,\alpha})_{i\in I,1\leq\alpha\leq \gamma_1^i},(x''_{i,\alpha})_{i\in I,1\leq\alpha\leq \gamma_2^i})%\\\nonumber
%\begin{comment}
%=&L_{\gamma_1,\gamma_2}
%\prod_{i\in I}\prod_{\alpha _2=1}^{\gamma _2^i}(e^{x''_{i,\alpha_2}})^{-\gamma_1^i+\sum_{j\in I}a_{ij}\gamma^j_1}\\\nonumber
%=&
%\end{comment}
%\frac{\prod_{i,j\in I}\prod_{\alpha _1=1}^{\gamma _1^i}\prod_{\alpha _2=1}^{\gamma _2^j}(1-e^{x'_{i,\alpha _1}-x''_{j,\alpha _2}})^{a_{ij}}}
%{\prod_{i\in I}\prod_{\alpha _1=1}^{\gamma _1^i}\prod_{\alpha _2=1}^{\gamma _2^i}(1-e^{x'_{i,\alpha _1}-x''_{j,\alpha _2}})}
%\begin{comment}
%L_{\gamma_1,\gamma_2}
%\prod_{i\in I}\exp({\bf x}''_{\gamma_2^i})^{-\gamma_1^i+\sum_{j\in I}a_{ij}\gamma^j_1}\\\nonumber
%\end{comment}
=
L_{\gamma_1,\gamma_2}(a_{\gamma _1}^{\gamma _2}(x''))^{-1}.
\end{equation}
\end{lemma}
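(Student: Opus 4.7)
The plan is to handle the two displayed identities \eqref{eq:mult-for-eta} and \eqref{eq:KKLL} separately. I would dispatch \eqref{eq:KKLL} first, since it is a clean exponential manipulation that also sets up intuition for the more delicate sign bookkeeping in \eqref{eq:mult-for-eta}.

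For \eqref{eq:KKLL}, I would rewrite each numerator factor of $K_{\gamma_1,\gamma_2}$ as $e^{x''_{j,\alpha_2}}-e^{x'_{i,\alpha_1}}=e^{x''_{j,\alpha_2}}(1-e^{x'_{i,\alpha_1}-x''_{j,\alpha_2}})$, and analogously for the denominator factors $e^{x''_{i,\alpha_2}}-e^{x'_{i,\alpha_1}}$. The ``$1-e^{\cdot}$'' pieces assemble exactly into $L_{\gamma_1,\gamma_2}$. The extracted exponentials collapse, using $\mathbf{x}''_{\gamma_2^j}=\sum_{\alpha_2}x''_{j,\alpha_2}$, into the monomial
\[
\exp\!\Bigl(\sum_{j}\mathbf{x}''_{\gamma_2^j}\sum_i a_{ij}\gamma_1^i \;-\; \sum_i \gamma_1^i\,\mathbf{x}''_{\gamma_2^i}\Bigr)=\exp\!\Bigl(\sum_j\mathbf{x}''_{\gamma_2^j}\bigl(\textstyle\sum_i a_{ij}\gamma_1^i-\gamma_1^j\bigr)\Bigr).
\]
Relabeling the inner summation and invoking $a_{ij}=a_{ji}$ identifies this with $(a_{\gamma_1}^{\gamma_2}(x''))^{-1}=\exp\bigl(\sum_i \mathbf{x}''_{\gamma_2^i}(\sum_j a_{ij}\gamma_1^j-\gamma_1^i)\bigr)$. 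This is where the symmetry of $Q$ is essential.

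For \eqref{eq:mult-for-eta}, I would parse the numerator and denominator of $k_{\gamma_1+\gamma_2}(x',x'')$ as products over unordered pairs of variables in the combined variable set, split into three classes: both primed, both double-primed, or mixed. By the convention that within each vertex the primed block precedes the double-primed block in the induced lex order on $I\times\ZZ_{>0}$, the first two classes reproduce $k_{\gamma_1}(x')$ and $k_{\gamma_2}(x'')$ verbatim. The mixed pairs match those appearing in $k_{\gamma_1,\gamma_2}(x',x'')$ up to a sign: in $k_{\gamma_1,\gamma_2}$ the factor is always written as $x''-x'$ with exponent $a_{ij}$, whereas in $k_{\gamma_1+\gamma_2}$ the factor is oriented according to the lex order on vertices, which flips the orientation precisely when the primed vertex $i$ exceeds the double-primed vertex $j$. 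Using $a_{ij}=a_{ji}$ each such flip contributes $(-1)^{a_{ij}}$, giving a total sign $(-1)^{\sum_{i>j}a_{ij}\gamma_1^i\gamma_2^j}$. The crucial point is that the \emph{same} combinatorial analysis applies to $K_{\gamma_1+\gamma_2}$---the substitution $x\mapsto e^x$ preserves the sign picked up by reversing a factor---so the analogous discrepancy between $K_{\gamma_1+\gamma_2}$ and $K_{\gamma_1}K_{\gamma_2}K_{\gamma_1,\gamma_2}$ is identical. Consequently the signs cancel in $\eta_{\gamma_1+\gamma_2}=k_{\gamma_1+\gamma_2}/K_{\gamma_1+\gamma_2}$, yielding \eqref{eq:mult-for-eta}.

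The main obstacle is the cross-term sign bookkeeping in \eqref{eq:mult-for-eta}: one must confirm that $k_{\gamma_1+\gamma_2}$ and $k_{\gamma_1}k_{\gamma_2}k_{\gamma_1,\gamma_2}$ differ by exactly the same sign function as $K_{\gamma_1+\gamma_2}$ and $K_{\gamma_1}K_{\gamma_2}K_{\gamma_1,\gamma_2}$, so that the manifestly $S_{\gamma_1+\gamma_2}$-invariant power series $\eta_{\gamma_1+\gamma_2}$ emerges cleanly from the quotient. Symmetry of $Q$ is indispensable at this step, since without $a_{ij}=a_{ji}$ the flipped factor $(x'_{i,\alpha_0}-x''_{j,\beta_0})^{a_{ji}}$ in $k_{\gamma_1+\gamma_2}$ cannot be matched---even up to a unit---to $(x''_{j,\beta_0}-x'_{i,\alpha_0})^{a_{ij}}$ in $k_{\gamma_1,\gamma_2}$.
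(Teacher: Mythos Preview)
Your proposal is correct and follows essentially the same argument as the paper. For \eqref{eq:KKLL} the paper computes $K_{\gamma_1,\gamma_2}/L_{\gamma_1,\gamma_2}$ by exactly the factorization $e^{x''}-e^{x'}=e^{x''}(1-e^{x'-x''})$ you use and then simplifies the resulting monomial via $a_{ij}=a_{ji}$; for \eqref{eq:mult-for-eta} the paper likewise computes $k_{\gamma_1+\gamma_2}/(k_{\gamma_1}k_{\gamma_2})$, identifies the mixed factors with those of $k_{\gamma_1,\gamma_2}$ up to the sign $\prod_{j>i}(-1)^{a_{ij}\gamma_1^j\gamma_2^i}$ (equivalent to your $(-1)^{\sum_{i>j}a_{ij}\gamma_1^i\gamma_2^j}$ after relabeling), observes the same sign occurs for $K$, and cancels them in the quotient $\eta$.
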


\begin{proof} To see \eqref{eq:mult-for-eta}, consider first the ratio
%{\footnotesize{
\begin{equation}\label{ratio}
\frac{k_{\gamma _1+\gamma _2}(x',x'')}{k_{\gamma _1}(x')k_{\gamma _2}(x'')}
\end{equation}
%}}
which is equal to the triple product
\begin{multline}\label{auxillary comp}
%{\scriptscriptstyle
\left[\prod_{i\in I}\prod _{\alpha _1=1}^{\gamma _1^i}\prod _{\alpha _2=1}^{\gamma _2^i}(x''_{i,\alpha _2}-x'_{i,\alpha _1})^{a_{ii}-1}\right]\cdot \\\left[\prod_{j>i}\prod _{\alpha _1=1}^{\gamma _1^i}\prod _{\alpha _2=1}^{\gamma _2^j}(x''_{j,\alpha _2}-x'_{i,\alpha _1})^{a_{ij}}\right] \cdot \left[\prod_{j>i}\prod _{\alpha _1=1}^{\gamma _1^j}\prod _{\alpha _2=1}^{\gamma _2^i}(x'_{j,\alpha _1}-x''_{i,\alpha _2})^{a_{ij}}\right]
%}
\end{multline}
Components $(x''-x')$ of the first two factors appear also in $k_{\gamma _1,\gamma _2}$, whereas instead of components $(x'_{j,\alpha _1}-x''_{i,\alpha _2})^{a_{ij}}$ of the third factor we see
$(x''_{i,\alpha _2}-x'_{j,\alpha _1})^{a_{ji}}$ in $k_{\gamma _1,\gamma _2}$. Because $a_{ij}=a_{ji}$ we conclude that the ratio \eqref{ratio} differs from $k_{\gamma _1,\gamma _2}(x',x'')$ by the sign factor
$$\prod_{j>i}\prod _{\alpha _1=1}^{\gamma _1^j}\prod _{\alpha _2=1}^{\gamma _2^i}(-1)^{a_{ij}}$$
The same sign factor appears in the comparison of the ratio
$$\frac{K_{\gamma _1+\gamma _2}(x',x'')}{K_{\gamma _1}(x')K_{\gamma _2}(x'')}$$
and $K_{\gamma _1,\gamma _2}(x',x'')$. This proves the equality
\eqref{eq:mult-for-eta}.

For the equality \eqref{eq:KKLL} notice that
$$
\begin{array}{rcccl}
K_{\gamma_1,\gamma_2}/L_{\gamma_1,\gamma_2} & = & \frac
{\prod_{i,j\in I}\prod_{\alpha _1=1}^{\gamma _1^i}\prod_{\alpha _2=1}^{\gamma _2^j}(e^{x''_{j,\alpha _2}})^{a_{ij}}}
{\prod_{i\in I}\prod_{\alpha _1=1}^{\gamma _1^i}\prod_{\alpha _2=1}^{\gamma _2^i}e^{x''_{i,\alpha _2}}}
& = &
\frac
{\prod_{i,j\in I}\prod_{\alpha _2=1}^{\gamma _2^j}(e^{x''_{j,\alpha _2}})^{a_{ij}\gamma ^i_1}}
{\prod_{i\in I}\prod_{\alpha _2=1}^{\gamma _2^i}(e^{x''_{i,\alpha _2}})^{\gamma ^i_1}}
\end{array}
$$
$$
\begin{array}{rcccl}
 & = &
\frac
{\prod_{i,j\in I}\prod_{\alpha _2=1}^{\gamma _2^i}(e^{x''_{i,\alpha _2}})^{a_{ji}\gamma ^j_1}}
{\prod_{i\in I}\prod_{\alpha _2=1}^{\gamma _2^i}(e^{x''_{i,\alpha _2}})^{\gamma ^i_1}} & = & \prod_{i\in I}\prod_{\alpha _2=1}^{\gamma _2^i}(e^{x''_{i,\alpha _2}})^{-\gamma ^i_1+\sum _ja_{ji}\gamma ^j_1}.
\end{array}
$$
Since we assumed that $a_{ij}=a_{ji}$ the equality \eqref{eq:KKLL} follows.
\end{proof}

%The analogue of Lemma \ref{lem:manipul} is the following

\begin{lemma}\label{lem:manipul'}
The above functions are connected as follows:
\begin{align}\label{eq:mult-for-eta-tilde}
\tilde{\eta}_{\gamma_1+\gamma_2}(x',x'')=
\tilde{\eta}_{\gamma_1} (x')\tilde{\eta}_{\gamma_2}(x'')
\frac{\tilde{k}_{\gamma_1,\gamma_2}(x',x'')}{\tilde{K}_{\gamma_1,\gamma_2}(x',x'')},
\end{align}

\begin{equation}\label{eq:KKLL-tilde}
\tilde{K}_{\gamma_1,\gamma_2}(x',x'')
=\tilde{L}_{\gamma _1,\gamma _2}(x',x'')(\tilde{b}_{\gamma _2}^{\gamma _1})^{-1}.
\end{equation}
\end{lemma}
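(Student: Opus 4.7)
The plan is to follow the structure of the proof of Lemma~\ref{lem:manipul}, carefully tracking the extra signs and exponentials coming from the substitution $x\mapsto -x$ in the $K$-factors, and absorbing them into the data $\tilde b^{\gamma_1}_{\gamma_2}=b^{\gamma_1}_{\gamma_2}\,\mu(\gamma_2,\gamma_1)$.

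For \eqref{eq:mult-for-eta-tilde}, since $\tilde k_\gamma=k_\gamma$ and $\tilde k_{\gamma_1,\gamma_2}=k_{\gamma_1,\gamma_2}$, the computation already carried out in the proof of Lemma~\ref{lem:manipul} gives verbatim
$$\tilde k_{\gamma_1+\gamma_2}(x',x'')=\tilde k_{\gamma_1}(x')\,\tilde k_{\gamma_2}(x'')\,\tilde k_{\gamma_1,\gamma_2}(x',x'')\cdot\prod_{j>i}\prod_{\alpha_1,\alpha_2}(-1)^{a_{ij}}.$$
Replacing $x$ by $-x$ in the analogous identity for $K$ — whose sign factor is independent of the variables — yields the same identity with $K,K_{\gamma_1,\gamma_2}$ replaced by $\tilde K,\tilde K_{\gamma_1,\gamma_2}$. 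Dividing, the sign factors cancel and \eqref{eq:mult-for-eta-tilde} follows.

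For \eqref{eq:KKLL-tilde}, the plan is to compute $\tilde K_{\gamma_1,\gamma_2}(x',x'')=K_{\gamma_1,\gamma_2}(-x',-x'')$ directly from its defining formula, using the elementary identity
$$e^{-x''_{j,\alpha_2}}-e^{-x'_{i,\alpha_1}}=-e^{-x'_{i,\alpha_1}}\bigl(1-e^{x'_{i,\alpha_1}-x''_{j,\alpha_2}}\bigr).$$
Each numerator factor of $K_{\gamma_1,\gamma_2}(-x',-x'')$ then contributes $(-1)^{a_{ij}}$, a power of $e^{-x'_{i,\alpha_1}}$, and the corresponding factor of the numerator of $L_{\gamma_1,\gamma_2}(x',x'')$; the denominator is treated in the same way. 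Collecting, the overall exponential prefactor is $\prod_i\exp({\bf x}'_{\gamma_1^i})^{\gamma_2^i-\sum_j a_{ij}\gamma_2^j}=(b^{\gamma_1}_{\gamma_2}(x'))^{-1}$, while the overall sign is $(-1)^{\sum_{ij}a_{ij}\gamma_1^i\gamma_2^j+\sum_i\gamma_1^i\gamma_2^i}$. Using $a_{ij}=a_{ji}$ to relabel indices, this sign equals $\mu(\gamma_2,\gamma_1)$, so the prefactor is $(b^{\gamma_1}_{\gamma_2})^{-1}\mu(\gamma_2,\gamma_1)=(\tilde b^{\gamma_1}_{\gamma_2})^{-1}$, proving \eqref{eq:KKLL-tilde}.

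The only delicate point is the sign bookkeeping: the symmetry $a_{ij}=a_{ji}$ is exactly what turns the combinatorial sum $\sum_{ij}a_{ij}\gamma_1^i\gamma_2^j$ emerging from the computation into the expression $\sum_{ij}a_{ij}\gamma_2^i\gamma_1^j$ appearing in the definition of $\mu(\gamma_2,\gamma_1)$; once this matching is observed, the rest reduces to the analogue of the calculation already done in Lemma~\ref{lem:manipul}.
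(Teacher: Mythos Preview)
Your argument is correct and is exactly the kind of analogue of Lemma~\ref{lem:manipul} that the paper has in mind (the paper does not spell out a separate proof for Lemma~\ref{lem:manipul'}). One small remark: in the sign bookkeeping for \eqref{eq:KKLL-tilde} you do not actually need to invoke $a_{ij}=a_{ji}$, since from the definition $\mu(\tau,\gamma)=(-1)^{\sum_i\gamma^i\tau^i+\sum_{i,j}a_{ij}\gamma^i\tau^j}$ one reads off $\mu(\gamma_2,\gamma_1)=(-1)^{\sum_i\gamma_1^i\gamma_2^i+\sum_{i,j}a_{ij}\gamma_1^i\gamma_2^j}$, which matches your computed sign on the nose; the appeal to symmetry is harmless but superfluous here.
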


\section{Categories of locally finite modules and their equivalences}\label{cat of loc finite}

Although some of the material in this section makes sense for general quivers, we assume for simplicity that the quiver $Q$ is symmetric, so that the results of Section \ref{rel between} can be applied. In particular we use the notation ${}^{\tilde{\sigma}}\hat{\cH}$ and $\hat{\cH}^\sigma$ from Section \ref{rel between}.

For $\gamma \in \bbZ _{\geq 0}^I$ and $n\in \bbZ_{\geq 0}$ denote by $\cH _\gamma ^{\geq n}\subset \cH _\gamma$ (resp. $\hat{\cH} _\gamma ^{\geq n}\subset \hat{\cH} _\gamma$, resp. ${}^{\tilde{\sigma}}\hat{\cH} _\gamma ^{\geq n}\subset {}^{\tilde{\sigma}}\hat{\cH} _\gamma$, resp.
$\hat{\cH} _\gamma ^{\sigma ,\geq n}\subset \hat{\cH}^\sigma _\gamma$) the space of polynomials (resp. of power series) without monomials of degree $<n$.

Given a $\bbZ ^I$-graded $\cH$- (resp. $\hat{\cH}$-, resp. $\hat{\cH}^\sigma$- ...) module $M=\oplus_\gamma M_\gamma$ and $\tau \in \bbZ ^I$ we define the twist $M(\tau)$ to be the graded module such that $M(\tau)_\gamma =M_{\tau -\gamma}$.

\begin{definition} \label{def of large ideal and loc f module} A graded (left or right) ideal $I\subset \cH$ is {\it large} if for every $\gamma \in \bbZ _{\geq 0}^I$ there exists $n_\gamma \in \bbZ _{\geq 0}$ such that $I_\gamma \supset \cH _\gamma ^{\geq n_\gamma}$.

A graded (left or right) $\cH$-module $M$ is {\it locally finite} if there exists a surjection of graded modules
$$\theta :\bigoplus \cH (\tau)\to M$$
such that for every summand $\cH (\tau)$, $\ker \theta \vert _{\cH (\tau)}$ is the twist of a large ideal in $\cH $.
Denote by $\cH\-\Mod_{lf}\subset \cH\-\Mod$ be the full subcategory of locally finite modules.

Similarly one defines large ideals and locally finite graded $\hat{\cH}$-, $\hat{\cH}^\sigma$-, ${}^{\tilde{\sigma}}\hat{\cH}$-modules.
\end{definition}

\begin{lemma} \label{first lemma on equiv}(1) The inclusion of graded algebras $\cH \subset \hat{\cH}$ induces by restriction of scalars the equivalence of categories
$$\cH\-\Mod_{lf}\simeq \hat{\cH}\-\Mod_{lf}.$$

(2) The equivalences of categories $\hat{\cH}\-\Mod\simeq {}^{\sigma}\hat{\cH}\-\Mod$, $\hat{\cH} \-\Mod\simeq \hat{\cH}^\sigma \-\Mod$ ... from Proposition \ref{equiv of cat by twists} induce the equivalences of the corresponding categories of locally finite modules $$\hat{\cH} \-\Mod_{lf}\simeq {}^{\sigma}\hat{\cH} \-\Mod_{lf},\quad \hat{\cH} \-\Mod_{lf}\simeq \hat{\cH}^\sigma  \-\Mod_{lf} $$
$$\hat{\cH} \-\Mod_{lf}\simeq {}^{\tilde{\sigma}}\hat{\cH} \-\Mod_{lf},\quad \hat{\cH} \-\Mod_{lf}\simeq \hat{\cH}^{\tilde{\sigma}}  \-\Mod_{lf}. $$
%for the twisted algebras from Section \ref{rel between}.
\end{lemma}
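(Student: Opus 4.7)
The starting observation is that for every $\gamma$ and $n$ the inclusion $\cH_\gamma \hookrightarrow \hat{\cH}_\gamma$ descends to an isomorphism $\cH_\gamma/\cH_\gamma^{\geq n} \xrightarrow{\sim} \hat{\cH}_\gamma/\hat{\cH}_\gamma^{\geq n}$. Using this, large graded left ideals of $\cH$ correspond canonically to large graded left ideals of $\hat{\cH}$ via $L \leftrightarrow \hat L$ with $\hat L_\gamma := L_\gamma + \hat{\cH}_\gamma^{\geq n_\gamma}$, and this correspondence induces isomorphisms of cyclic quotients $\cH/L \xrightarrow{\sim} \hat{\cH}/\hat L$. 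I would first check that restriction preserves local finiteness: given a presentation $\bigoplus \hat{\cH}(\tau_i) \twoheadrightarrow M$ with large ideal kernel on each summand, restricting to $\bigoplus \cH(\tau_i) \to M$ is still surjective and has large ideal kernels, by the above bijection.

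For the quasi-inverse, given $N \in \cH\-\Mod_{lf}$, I extend the action to $\hat{\cH}$ by continuity: for $\hat f \in \hat{\cH}_\alpha$ and $n \in N_\gamma$, set $\hat f \cdot n := f_0 \cdot n$ for any polynomial truncation $f_0 \in \cH_\alpha$ with $\hat f - f_0 \in \hat{\cH}_\alpha^{\geq m}$ and $m$ sufficiently large. The main technical point, which I expect to be the crux of this part, is to show that for every fixed $n \in N$ and every $\alpha$ there exists $m_0$ such that $\cH_\alpha^{\geq m}\cdot n = 0$ for all $m \geq m_0$. Using the presentation of $N$ this reduces to the case where $n$ is the image of an element of a single summand $\cH(\tau_i)$, for which one exploits the multiplication formula \eqref{formula-for-multiplication} to see that a high-internal-degree factor forces high internal degree of the product, eventually landing in the large ideal kernel and hence vanishing in $N$. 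Associativity and local finiteness of the resulting $\hat{\cH}$-module structure, and the verification that extension and restriction are mutually quasi-inverse (with full faithfulness of restriction following from the uniqueness of the extension), are then routine.

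\textbf{Plan for (2).} All of the equivalences coming from Proposition \ref{equiv of cat by twists} are the identity on underlying $\bbZ^I$-graded vector spaces; only the module action is relabeled via $\sigma_\tau$ (respectively $\tilde\sigma_\tau$). Since each $\sigma_\tau$, $\tilde\sigma_\tau$ is a degree-preserving algebra automorphism of $\hat{\cH}$, a graded vector subspace of $\hat{\cH}$ is a graded left ideal of $\hat{\cH}$ if and only if it is a graded left ideal of any of its twists, and the equivalence sends each free module $\hat{\cH}(\tau)$ to the analogous free module over the twisted algebra. The "large" condition $K_\gamma \supset \hat{\cH}_\gamma^{\geq n_\gamma}$ depends only on the underlying graded vector subspace, hence is preserved. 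Therefore any presentation exhibiting local finiteness over one of these algebras transports directly to a presentation exhibiting local finiteness over the other, yielding the four claimed restricted equivalences. The argument for $\tilde\sigma$ is identical to that for $\sigma$.
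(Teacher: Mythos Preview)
Your proposal is correct and follows essentially the same strategy as the paper. For (1), the paper argues directly at the level of ideals: given a large ideal $I\subset\cH$ it takes $\hat I_\gamma$ to be the adic completion of $I_\gamma$, observes that $\hat I$ is a large ideal in $\hat\cH$ because the product map $\hat\cH_\gamma\times\hat\cH_\tau\to\hat\cH_{\gamma+\tau}$ is continuous, and notes that $\cH/I\xrightarrow{\sim}\hat\cH/\hat I$, which immediately gives essential surjectivity of restriction. Your element-by-element ``extend the action by continuity'' argument is just the concrete unpacking of this: the technical claim you isolate (that $\cH_\alpha^{\ge m}\cdot n=0$ for $m\gg0$) is exactly what the paper's one-line appeal to continuity encodes, and your use of the degree shift in the multiplication formula \eqref{degree} is the right way to justify it. For (2), both you and the paper treat it as immediate from the fact that the twisting equivalences are the identity on underlying graded vector spaces and the large-ideal condition is purely vector-space theoretic; the paper literally writes ``This is clear.''
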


\begin{proof} (1) Given a large graded ideal $\hat{I}\subset \hat{\cH}$ the ideal $I:=\hat{I}\cap \cH$ is also large. Moreover, the natural
map $\cH/I\to \hat{\cH}/\hat{I}$ is an isomorphism. This implies that a locally finite $\hat{\cH}$-module gives by restriction of scalars a locally finite $\cH$-module. So we get the fully faithful functor
\begin{equation}
\label{restr of scal} \hat{\cH} \-\Mod_{lf}\to \cH \-\Mod_{lf}.
\end{equation}
We need to prove that this functor is essentially surjective.
Let $I\subset \cH$ be a large ideal. Let $\hat{I}_\gamma \subset \hat{\cH}_\gamma$ be the completion of $I_\gamma$ in the adic topology of the power series ring $\hat{\cH }_\gamma$.
Then $\hat{I}:=\oplus _\gamma \hat{I}_\gamma $ is a large graded ideal in $\hat{\cH}$ (because the product map $\hat{\cH} _\gamma \times \hat{\cH}_\tau \to \hat{\cH}_{\gamma +\tau}$ is continuous). Moreover (since $I$ is large) we have $\hat{I}\cap \cH =I$ and the natural map $\cH/I\to \hat{\cH}/\hat{I}$ is an isomorphism. This implies that any locally finite $\cH$-module is in fact a locally finite $\hat{\cH}$-module and proves the essential surjectivity of the functor \eqref{restr of scal}.

(2) This is clear.
%For these equivalences notice that the maps
%$$f\mapsto a_\tau^\gamma f \quad \text{and}\quad f\mapsto \tilde{b}_\tau^\gamma f$$
%are bijections from $\hat{\cH}_\gamma$ to itself which preserve the subspaces $\hat{\cH}_\gamma ^n$.
\end{proof}

We now repeat the above definitions for the graded algebra $\cR$.
For each $\gamma \in \bbZ _{\geq 0}^I$ consider $\cR _\gamma$ as the space of symmetric polynomials of the variables $z$ (Section \ref{explicit kha}).
Let $\cR _\gamma ^{\geq n}\subset \cR _\gamma$ be the subspace of polynomials divisible by some $(z_{i_1}-1)\ldots (z_{i_d}-1)$. Note that under the embedding $\cR \subset \hat{\cH}$ we have $\cR _\gamma^{\geq n} =\cR \cap \hat{\cH}_\gamma ^{\geq n}$ (and similarly for the ring homomorphisms ${}^{\tilde{\sigma}}h:\cR \to {}^{\tilde{\sigma}}\hat{\cH}$ and $h^\sigma :\cR \to \hat{\cH}^\sigma$). We say that a graded (left or right) ideal $J\subset \cR$ is {\it large} if for every $\gamma $ there exists $n_\gamma \in \bbZ _{\geq 0}$ such that $J_\gamma \supset \cR _\gamma ^{\geq n_\gamma}$. It follows that the inverse image under ${}^{\tilde{\sigma}}h$ or $h^\sigma$ of a large ideal is also large. Exactly as in Definition
\ref{def of large ideal and loc f module} we define the notion of a
locally finite graded $\cR$-module. Let
$$\cR\-\Mod_{lf}\subset \cR\-\Mod$$
be the full subcategory of locally finite $\cR$-modules. For a better statement in the next lemma we consider the algebra $\cR _\bbQ:=\cR \otimes _\bbZ \bbQ$ with the obvious extension of the above definitions.

Similarly to Lemma
\ref{first lemma on equiv} we have the following result.

\begin{lemma}\label{second lemma on equiv}
The homomorphisms of graded algebras ${}^{\tilde{\sigma}}h:\cR \to {}^{\tilde{\sigma}}\hat{\cH}$ and $h^\sigma :\cR \to \hat{\cH}^\sigma$ (Theorem \ref{main thm}) induce by restriction of scalars the equivalences of categories
$${}^{\tilde{\sigma}}\hat{\cH}\-\Mod_{lf}\simeq \cR_{\bbQ } \-\Mod_{lf},\quad \hat{\cH}^\sigma\-\Mod_{lf}\simeq \cR _\bbQ \-\Mod_{lf}$$
and similarly for right modules.
\end{lemma}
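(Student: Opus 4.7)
The plan is to mirror the proof of Lemma \ref{first lemma on equiv}(1), replacing the inclusion $\cH\subset \hat{\cH}$ with the algebra homomorphism $h^\sigma:\cR_\bbQ\to \hat{\cH}^\sigma$ (and analogously ${}^{\tilde{\sigma}}h:\cR_\bbQ\to {}^{\tilde{\sigma}}\hat{\cH}$). The equivalences are given by restriction of scalars, and I will verify full faithfulness and essential surjectivity on locally finite modules.

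Two ingredients drive the argument. First, the preimage identity $(h^\sigma)^{-1}(\hat{\cH}_\gamma^{\sigma,\geq n}) = \cR_{\bbQ,\gamma}^{\geq n}$ recorded just above the lemma implies that the preimage under $h^\sigma$ of a large graded ideal in $\hat{\cH}^\sigma$ is a large graded ideal in $\cR_\bbQ$. Second, a density statement: for every $n$ the induced $\bbQ$-linear map
\[
\cR_{\bbQ,\gamma}/\cR_{\bbQ,\gamma}^{\geq n} \to \hat{\cH}^\sigma_\gamma/\hat{\cH}^{\sigma,\geq n}_\gamma
\]
is an isomorphism. This density reduces to the classical fact that the Chern character identifies the augmentation completion of $\cR(\GL(d))_\bbQ$ with $\bbQ[[x_1,\ldots,x_d]]^{S_d}$; concretely the exponentials $e^{k x}$ span $\bbQ[[x]]/(x)^n$ by a Vandermonde argument, and this extends factor by factor over the vertices of $Q$. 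The power series $\eta_\gamma$ (respectively $\tilde{\eta}_\gamma$) relating $h^\sigma$ (respectively ${}^{\tilde{\sigma}}h$) to the bare Chern character has constant term $1$, hence is a unit in $\hat{\cH}_\gamma$, so multiplication by it preserves the density.

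Combining these, for any large graded $\hat{I}\subset \hat{\cH}^\sigma$ and $J:=(h^\sigma)^{-1}(\hat{I})$ the natural map $\cR_\bbQ/J\xrightarrow{\sim}\hat{\cH}^\sigma/\hat{I}$ is an isomorphism (injectivity from the definition of $J$, surjectivity from density together with $\hat{I}_\gamma\supset \hat{\cH}^{\sigma,\geq n_\gamma}_\gamma$). Full faithfulness of restriction of scalars follows because every $\cR_\bbQ$-linear map between locally finite modules factors through such quotients and is hence automatically $\hat{\cH}^\sigma$-linear. For essential surjectivity, given a locally finite $\cR_\bbQ$-module $M$, every $m\in M_\gamma$ is annihilated by some $\cR_{\bbQ,\tau}^{\geq n}$, so the action of $\cR_{\bbQ,\tau}$ on $m$ factors through $\cR_{\bbQ,\tau}/\cR_{\bbQ,\tau}^{\geq n}$ and, via the quotient isomorphism, extends canonically to an action of $\hat{\cH}^\sigma_\tau$ on $m$; compatibility and associativity of these extensions are routine to check. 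The analogous argument for ${}^{\tilde{\sigma}}h$ and for right modules is identical. The main work is concentrated in the density statement; once that is in place, everything else is formal.
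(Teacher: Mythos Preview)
Your approach is correct and matches the paper's, whose proof is literally the single sentence ``Exactly the same as that of Lemma \ref{first lemma on equiv}.'' You have correctly unpacked what this entails: the preimage of a large ideal is large (stated just before the lemma), and the density isomorphism $\cR_{\bbQ,\gamma}/\cR_{\bbQ,\gamma}^{\geq n}\simeq \hat{\cH}_\gamma/\hat{\cH}_\gamma^{\geq n}$, together with the fact that $\eta_\gamma$ (resp.\ $\tilde\eta_\gamma$) is a unit, give the quotient isomorphism $\cR_\bbQ/J\simeq \hat{\cH}^\sigma/\hat{J}$ that drives both directions.

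One small remark on your essential surjectivity step: rather than asserting that \emph{every} element of a locally finite module is torsion (which is true but requires a continuity argument for the multiplication that you do not supply), it is cleaner---and closer to the paper's proof of Lemma \ref{first lemma on equiv}(1)---to work directly with a presentation $\bigoplus \cR_\bbQ(\tau)\to M$ and complete each large kernel ideal to a large ideal in $\hat{\cH}^\sigma$; the quotient isomorphism then immediately endows $M$ with a compatible $\hat{\cH}^\sigma$-module structure.
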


\begin{proof} Exactly the same as that of Lemma \ref{first lemma on equiv}.
\end{proof}

\begin{corollary} \label{summary of equivalences} For a symmetric quiver we have the equivalences of categories
$$\cH\-\Mod_{lf}\simeq \hat{\cH}\-\Mod_{lf}\simeq
{}^{\tilde{\sigma}}\hat{\cH}\-\Mod_{lf}\simeq \cR _\bbQ\-\Mod_{lf}.
$$
\end{corollary}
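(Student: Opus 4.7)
The plan is to assemble the corollary directly by composing the three equivalences already established in the two preceding lemmas, since each link in the chain has been set up specifically for this chaining. No new computation should be needed; the work is purely formal.

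First I would invoke Lemma \ref{first lemma on equiv}(1) to identify $\cH\-\Mod_{lf}$ with $\hat{\cH}\-\Mod_{lf}$ via restriction of scalars along the inclusion $\cH\subset\hat{\cH}$. Next, I would apply Lemma \ref{first lemma on equiv}(2) (the ``twist'' half, for the group homomorphism $\tilde{\sigma}:\bbZ^I\to\Aut(\hat{\cH})$ from Lemma \ref{tilde-sigma}) to pass from $\hat{\cH}\-\Mod_{lf}$ to ${}^{\tilde{\sigma}}\hat{\cH}\-\Mod_{lf}$; here the underlying equivalence is the Zhang twist equivalence of Proposition \ref{equiv of cat by twists}, which preserves the underlying graded vector space of a module and hence preserves the condition of being a twist of a module annihilated by a large ideal. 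Finally, I would use Lemma \ref{second lemma on equiv} in the form ${}^{\tilde{\sigma}}\hat{\cH}\-\Mod_{lf}\simeq\cR_\bbQ\-\Mod_{lf}$, which is restriction of scalars along the injective homomorphism ${}^{\tilde{\sigma}}h:\cR\to{}^{\tilde{\sigma}}\hat{\cH}$ of Theorem \ref{main thm}.

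Composing the three equivalences yields the required chain
\[
\cH\-\Mod_{lf}\simeq\hat{\cH}\-\Mod_{lf}\simeq{}^{\tilde{\sigma}}\hat{\cH}\-\Mod_{lf}\simeq\cR_\bbQ\-\Mod_{lf}.
\]
The only point worth double-checking, and hence the mildest ``obstacle'', is that the three equivalences genuinely compose, i.e.\ that the notion of ``locally finite'' on each side is stable under the transitions: large ideals in $\cH$ correspond to large ideals in $\hat{\cH}$ via adic completion (already verified in Lemma \ref{first lemma on equiv}(1)); the Zhang twist by $\tilde{\sigma}$ is the identity on graded vector spaces and therefore carries large ideals to large ideals; and preimages of large ideals under ${}^{\tilde{\sigma}}h$ are large by the observation that $\cR_\gamma^{\geq n}=\cR_\gamma\cap\hat{\cH}_\gamma^{\geq n}$ noted just before Lemma \ref{second lemma on equiv}. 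Given these compatibilities, the corollary follows immediately, so the proof proposal is essentially a one-line composition referring to the preceding two lemmas.
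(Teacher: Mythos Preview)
Your proposal is correct and follows exactly the same approach as the paper: the paper's own proof is the one-line sentence ``This follows from Lemmas \ref{first lemma on equiv}, \ref{second lemma on equiv} and Proposition \ref{equiv of cat by twists},'' and you have simply unpacked that citation into its three constituent equivalences with a brief sanity check on the stability of large ideals under each step.
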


\begin{proof} This follows from Lemmas \ref{first lemma on equiv},
\ref{second lemma on equiv} and Proposition \ref{equiv of cat by twists}.
\end{proof}

\begin{example} \label{explicit module structure}
Let $M$ be a $\bbZ ^I$-graded cyclic locally finite $\hat{\cH}$- (or $\cH$-) module with a surjection of $\hat{\cH}$-modules $\overline{(-)}:\hat{\cH}\to M$. We want to describe explicitly the  corresponding graded $\cR$-module structure on
$M$ given by Corollary \ref{summary of equivalences}.

Let $f_1\in \hat{\cH}_\gamma$, $f_2\in \hat{\cH}_\tau$ and $m=\overline{f_2}\in M_\tau$.
Then
$$f_1m=\overline{f_1\cdot f_2}$$
where $f_1\cdot f_2$ is the product in $\hat{\cH}$. So the ${}^{\tilde{\sigma}}\hat{\cH}$-module structure on $M$ is as follows:
$$f_1\circ m=\overline{f_1\circ f_2}=\overline{(\tilde{b}_\tau ^\gamma f_1)\cdot f_2}.$$
Finally, considering $\cR$ as a subspace of $\hat{\cH}$ as before (via the Chern character map), for $f\in \cR _\gamma$ one has
$$fm={}^{\tilde{\sigma}}h(f)\circ m=(\tilde{\eta}_\gamma f)\circ m=
\overline{(\tilde{b}_\tau ^\gamma \tilde{\eta}_\gamma f)\cdot f_2}.$$
\end{example}

\section{Example: Cohomology and K-theory of framed moduli as cyclic locally finite modules over $\cH$ and $\cR$}

\subsection{} We recall the graded $\cH$- and $\cR$-modules corresponding to framed moduli of $Q$. A good reference is \cite{Fr}.

Let $Q$ be a finite directed quiver with the set of vertices $I$ and $a_{ij}$ arrows from vertex $i$ to vertex $j$. Consider the augmented quiver $\tilde{Q}$, which is obtained from $Q$ by adding a vertex $*$ and one arrow from $*$ to any vertex in $Q$, i.e. $a_{*i}=1$, $a_{i*}=0$ for any $i\in I$.

One uses the augmented quiver to construct moduli spaces of "framed" representations of $Q$ with a dimension vector $\gamma$. More precisely, consider the augmented dimension vector $\tilde{\gamma}:=(1^*,\gamma)$ of $\tilde{Q}$. Denote by $\tilde{M}_{\gamma}$  the space of $\tilde{Q}$-representations with dimension vector $\tilde{\gamma}$. It  consists of a $Q$-representation $(V^i)_{i\in I}$ and a choice of a vector $v\in \oplus V^i$. The action of $G_\gamma$ on $M_\gamma$ extends naturally to an action on $\tilde{M}_{\gamma}$.

Choose the stability condition
$$\theta :\{\{*\}\cup I\}\to \bbZ, \quad \theta (*)=-\sum _i\gamma ^i,\quad \text{and}\quad \theta (i)=1\quad \text{for all}\quad i\in I$$
so that $\theta (\tilde{\gamma})=0$.
Recall that a representation $\tilde{V}$ of $\tilde{Q}$ with dimension vector $\tilde{\gamma}$ is $\theta$-semi-stable (resp. stable) if $\theta (\tilde{\gamma})=0$ and for every subrepresentation $\tilde{V}'\subset \tilde{V}$ with dimension vector $\tilde{\gamma}'$ one has $\theta (\tilde{\gamma}')\geq 0$ (resp. $>0$ for all proper subrepresentations $N'$). Actually in this case a representation is stable if it is semi-stable. We denote by $\tilde{M}_\gamma^{st}\subset \tilde{M}_{\gamma}$ the open subset consisting of $\theta$-stable representations of $\tilde{Q}$. Obviously a representation $\tilde{V}=(v\in \oplus V^i)\in \tilde{M}_{\gamma}$ is stable if and only if the vector $v$ generates the $Q$-representation $(V^i)$.

The $G_\gamma $-action on  $\tilde{M}_{\gamma}$ restricts to an action on $\tilde{M}_\gamma^{st}$. Moreover, the $G_\gamma$-action on $\tilde{M}_\gamma^{st}$ is free and there exists a geometric quotient
$$\tilde{M}_{\gamma }^{st}\to \tilde{M}_{\gamma }^{st}\quot G_\gamma=:\cM_\gamma$$
which is a principal $G_\gamma$-bundle. The quotient $\cM _\gamma$  %This space
 is a smooth quasi-projective variety (when nonempty).
It is the moduli space of representations of $Q$ with dimension vector $\gamma$ and a choice of a generator.%\marginpar{\tiny (\v{s}) Does ``generator'' needs some extra words?}

Forgetting the generator gives
the map
$$\tau :\tilde{M}_\gamma \to M_\gamma$$
which is a $G_\gamma$-equivariant vector bundle.
%which induces the projective morphism
%$$\cM _\gamma =\tilde{M}_{\gamma }^{st}//G_\gamma \to M_\gamma //G_\gamma$$
%In any case we may consider $\tilde{M}_{\gamma }^{st}$ as a $G_\gamma $-stack $\tilde{M}_{\gamma }^{st}/G_\gamma$.
Like the graded space $\cH =\oplus _\gamma H_{G_\gamma}(M_\gamma ,\bbQ)$ is naturally a $\bbZ _{\geq 0}^I$-graded $\cH$-module, so is the
the space
$$\bigoplus _\gamma H_{G_\gamma}(\tilde{M}_\gamma ,\bbQ).$$
Indeed, let $\gamma _1,\gamma _2\in \bbZ ^I_{\geq 0}$ and put $\gamma =\gamma _1+\gamma _2$. In the notation of Section \ref{sect rev of coha} let
$$\tilde{M}_\gamma \supset \tilde{M}_{\gamma _1,\gamma _2}:=\tau^{-1}(M_{\gamma _1,\gamma _2})$$
We have the obvious commutative diagram of stacks
\begin{equation}\label{ext of module}
\begin{array}{ccccccc}M_{\gamma _1}/G_{\gamma _1}\times \tilde{M}_{\gamma _2}/G_{\gamma _2} & \stackrel{p}{\leftarrow} & \tilde{M}_{\gamma _1,\gamma _2}/G_{\gamma _1,\gamma _2} & \stackrel{i}{\to } & \tilde{M}_{\gamma}/G_{\gamma _1,\gamma _2} & \stackrel{\pi}{\to } & \tilde{M}_{\gamma}/G_{\gamma }\\
\downarrow id\times \tau & & \downarrow \tau & & \downarrow \tau & & \downarrow \tau\\
M_{\gamma _1}/G_{\gamma _1}\times M_{\gamma _2}/G_{\gamma _2} & \stackrel{p}{\leftarrow} & M_{\gamma _1,\gamma _2}/G_{\gamma _1,\gamma _2} & \stackrel{i}{\to } & M_{\gamma}/G_{\gamma _1,\gamma _2} & \stackrel{\pi}{\to } & M_{\gamma}/G_{\gamma }
\end{array}
\end{equation}
Taking the cohomology of stacks in the upper row of the diagram \eqref{ext of module} gives the map
$$H_{G_{\gamma _1}}(M_{\gamma _1})\times H_{G_{\gamma _2}}(\tilde{M}_{\gamma _2})\stackrel{\pi _*\cdot i_*\cdot p^*}{\longrightarrow} H_{G_{\gamma}}(\tilde{M}_\gamma)$$
which defines the graded $\cH$-module structure on $\oplus _\gamma H_{G_\gamma}(\tilde{M}_\gamma ,\bbQ)$. Moreover, the equivariant vector bundle $\tau :\tilde{M}_\gamma \to M_\gamma$ induces an isomorphism
$$\tau ^*:\cH _\gamma =H_{G_{\gamma}}(M_\gamma)\stackrel{\sim}{\to } H_{G_{\gamma}}(\tilde{M}_\gamma)$$
which identifies the $\cH$-modules $\cH\simeq \oplus _\gamma H_{G_\gamma}(\tilde{M}_\gamma ,\bbQ)$.

Similarly, the space $\oplus _\gamma H_{G_\gamma}(\tilde{M}^{st}_\gamma ,\bbQ)$ is a graded $\cH$-module. Indeed, in the previous notation define
$$\tilde{M}^{st}_{\gamma _1,\gamma _2}:=\tilde{M}_{\gamma _1,\gamma _2}\cap
\tilde{M}^{st}_{\gamma }.$$
Then the projection $\tilde{M}_{\gamma _1,\gamma _2}\stackrel{p}{\to} M_{\gamma _1}\times \tilde{M}_{\gamma _2}$ restricts to the projection
$$\tilde{M}^{st}_{\gamma _1,\gamma _2}\stackrel{p}{\to} M_{\gamma _1}\times \tilde{M}^{st}_{\gamma _2}$$ and we have the commutative diagram of stacks
\begin{equation}\label{ext of module to stable}
\begin{array}{ccccccc}M_{\gamma _1}/G_{\gamma _1}\times \tilde{M}^{st}_{\gamma _2}/G_{\gamma _2} & \stackrel{p}{\leftarrow} & \tilde{M}^{st}_{\gamma _1,\gamma _2}/G_{\gamma _1,\gamma _2} & \stackrel{i}{\to } & \tilde{M}^{st}_{\gamma}/G_{\gamma _1,\gamma _2} & \stackrel{\pi}{\to } & \tilde{M}^{st}_{\gamma}/G_{\gamma }\\
\downarrow id\times j & & \downarrow j & & \downarrow j & & \downarrow j\\
M_{\gamma _1}/G_{\gamma _1}\times \tilde{M}_{\gamma _2}/G_{\gamma _2} & \stackrel{p}{\leftarrow} & \tilde{M}_{\gamma _1,\gamma _2}/G_{\gamma _1,\gamma _2} & \stackrel{i}{\to } & \tilde{M}_{\gamma}/G_{\gamma _1,\gamma _2} & \stackrel{\pi}{\to } & \tilde{M}_{\gamma}/G_{\gamma }
\end{array}
\end{equation}
where $j:\tilde{M}_\gamma^{st}\hookrightarrow \tilde{M}_\gamma$ is the open embedding.
As before we obtain the map
$$H_{G_{\gamma _1}}(M_{\gamma _1})\times H_{G_{\gamma _2}}(\tilde{M}^{st}_{\gamma _2})\stackrel{\pi _*
\cdot i_*\cdot p^*}{\longrightarrow} H_{G_{\gamma}}(\tilde{M}^{st}_\gamma)$$
which endows the space
$$\tilde{\cH}^{st}:=\bigoplus _\gamma H_{G_{\gamma}}(\tilde{M}_\gamma ^{st},\bbQ)$$
with a structure of a $\bbZ _{\geq 0}^I$-graded $\cH$-module. It comes with a morphism of $\cH$-modules
$$j^*:\cH \to \tilde{\cH}^{st}$$
induced by the open embedding $j:\tilde{M}^{st}_\gamma\hookrightarrow \tilde{M}_\gamma$.

\begin{proposition} \label{prop that geom mod is locally finite}The morphism of graded $\cH$-modules $j^*:\cH \to \tilde{\cH}^{st}$ is surjective and its kernel is a large ideal in $\cH$. In particular, $\tilde{\cH}^{st}$ is a cyclic locally finite $\cH$-module.
\end{proposition}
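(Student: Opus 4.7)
The plan is to reduce the question about $j^*$ to the tautological map $H^\bullet(BG_\gamma,\bbQ)\to H^\bullet(\cM_\gamma,\bbQ)$ and then to invoke two classical results for framed quiver moduli: Kirwan-style surjectivity and finite-dimensionality of the cohomology.

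First I would identify both sides of $j^*$ explicitly. Since $\tau:\tilde{M}_\gamma\to M_\gamma$ is a $G_\gamma$-equivariant vector bundle over the $G_\gamma$-equivariantly contractible space $M_\gamma$, the map $\tau^*$ gives isomorphisms $\cH_\gamma\cong H^\bullet_{G_\gamma}(\tilde{M}_\gamma,\bbQ)\cong H^\bullet(BG_\gamma,\bbQ)$ (as already used in the text). On the other hand $\tilde{M}_\gamma^{st}\to\cM_\gamma$ is a principal $G_\gamma$-bundle, so $\tilde{\cH}^{st}_\gamma\cong H^\bullet(\cM_\gamma,\bbQ)$. Under these identifications $j^*$ becomes the pullback along the classifying map $\cM_\gamma\to BG_\gamma$ of the principal bundle, i.e.\ the map sending universal characteristic classes to the corresponding tautological classes on $\cM_\gamma$.

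Next, for the surjectivity of $j^*$ I would invoke Kirwan/Atiyah--Bott surjectivity in the form worked out for framed quiver moduli (Reineke). For the stability $\theta$ chosen in the text one has $\theta$-semistable $=$ $\theta$-stable, and the Harder--Narasimhan stratification of $\tilde{M}_\gamma\setminus\tilde{M}_\gamma^{st}$ consists of locally closed $G_\gamma$-invariant subsets whose equivariant Thom--Gysin sequences degenerate; this degeneration yields successive surjections on equivariant cohomology, hence surjectivity of $j^*$ after passing to the open stratum.

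For the large-ideal claim it suffices to know that $H^\bullet(\cM_\gamma,\bbQ)$ is finite-dimensional, which follows from Reineke's Bia{\l}ynicki-Birula-type cell decomposition of framed quiver moduli into finitely many affine cells (and which in fact shows the cohomology is concentrated in even degrees). Since $\cH_\gamma$ is a graded polynomial algebra and its quotient $\tilde{\cH}^{st}_\gamma$ is finite-dimensional, $\ker(j^*\vert_{\cH_\gamma})$ must contain $\cH_\gamma^{\geq n_\gamma}$ for some $n_\gamma$. Thus $\ker j^*$ is a large graded ideal (automatically two-sided by Remark \ref{suppercommutative for completed}), and $\tilde{\cH}^{st}\cong \cH/\ker j^*$ is a cyclic locally finite $\cH$-module. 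The main obstacle is the surjectivity step: everything else is essentially bookkeeping, but Kirwan surjectivity for framed quiver moduli is where nontrivial Morse-theoretic input from the Harder--Narasimhan stratification enters.
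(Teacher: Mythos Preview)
Your argument is correct, but the surjectivity step follows a different path from the paper's. The paper does not invoke Kirwan/Atiyah--Bott surjectivity via the Harder--Narasimhan stratification. Instead it routes through equivariant Chow groups: it uses the commutative square
\[
\begin{array}{ccccc}
A(\cM_\gamma)_\bbQ & \stackrel{q^*}{\rightarrow} & A_{G_\gamma}(\tilde{M}^{st}_\gamma)_\bbQ& \stackrel{j^*}{\leftarrow} & A_{G_\gamma}(\tilde{M}_\gamma)_\bbQ\\
\downarrow & & \downarrow & & \downarrow\\
H(\cM _\gamma ,\bbQ) & \stackrel{q^*}{\to} & H_{G_\gamma}(\tilde{M}^{st}_\gamma,\bbQ) & \stackrel{j^*}{\leftarrow} & H_{G_\gamma}(\tilde{M}_\gamma,\bbQ)
\end{array}
\]
where the top $j^*$ is surjective for free (restriction of Chow groups to an open subset), the right vertical arrow is an isomorphism (affine space), and the left vertical cycle map is an isomorphism because $\cM_\gamma$ has a cell decomposition (Engel--Reineke). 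So the paper uses the cell decomposition already for surjectivity, not only for finite-dimensionality; once it is in place, no Morse-theoretic input is needed. Your Kirwan-type argument is a genuine alternative: it is heavier machinery but does not need the cell decomposition for the surjectivity step, and would apply in situations where such a decomposition is unavailable. For the large-ideal part both arguments coincide: finite-dimensionality of $H^\bullet(\cM_\gamma,\bbQ)$ forces $\cH_\gamma^{\geq n_\gamma}\subset\ker j^*$. (A small aside: your appeal to Remark~\ref{suppercommutative for completed} for two-sidedness is unnecessary here, since the definition of ``large'' already applies to one-sided ideals and this section does not yet assume $Q$ symmetric.)
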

\begin{proof} We use the ideas from \cite{Fr}. Fix $\gamma \in \bbZ_{\geq 0}^I$. For a smooth complex algebraic variety $Y$ with a $G_\gamma $-action one has the equivariant Chow group $A_{G_\gamma}(Y)$ \cite{EdGra}. It comes together with the cycle map
to equivariant cohomology $A_{G_\gamma}(Y)\to H_{G_\gamma}(Y,\bbQ)$. The open embedding $j:\tilde{M}^{st}_\gamma\hookrightarrow \tilde{M}_\gamma$ and the quotient map $q:\tilde{M}^{st}_\gamma \to \cM _\gamma$ give rise to a commutative diagram
\begin{equation}\label{equivar chow}
\begin{array}{ccccc}
A(\cM_\gamma)_\bbQ & \stackrel{q^*}{\rightarrow} & A_{G_\gamma}(\tilde{M}^{st}_\gamma)_\bbQ& \stackrel{j^*}{\leftarrow} & A_{G_\gamma}(\tilde{M}_\gamma)_\bbQ\\
\downarrow & & \downarrow & & \downarrow\\
H(\cM _\gamma ,\bbQ) & \stackrel{q^*}{\to} & H_{G_\gamma}(\tilde{M}^{st}_\gamma,\bbQ) & \stackrel{j^*}{\leftarrow} & H_G(\tilde{M}_\gamma,\bbQ)
\end{array}
\end{equation}
where the maps $q^*$ are isomorphisms, since $q$ is a principal $G_\gamma$-bundle.

The map $A_{G_\gamma}(\tilde{M}^{st}_\gamma)_\bbQ  \stackrel{j^*}{\leftarrow}  A_{G_\gamma}(\tilde{M}_\gamma)_\bbQ$ is clearly surjective and the right vertical arrow is an isomorphism. So to prove that the map $H_{G_\gamma}(\tilde{M}^{st}_\gamma,\bbQ)  \stackrel{j^*}{\leftarrow}  H_G(\tilde{M}_\gamma,\bbQ)$ is surjective it suffices to show that the equivariant cycle map $A^*_{G_\gamma}(\tilde{M}^{st}_\gamma)_\bbQ\to H_{G_\gamma}(\tilde{M}^{st}_\gamma,\bbQ)$ is an isomorphism.
Recall that the variety $\cM _\gamma$ has a cell decomposition \cite{EnRei}. This implies that the cycle map $A(\cM _\gamma)_\bbQ
\to H(\cM _\gamma,\bbQ)$ is an isomorphism, and completes the proof of surjectivity of the map $j^*$. To finish the proof of the proposition it suffices to notice that for each $\gamma \in \bbZ_{\geq 0}^I$ we have $\cH^{\gg 0}_\gamma \subset \ker j^*$ (because $\cM _\gamma$ is a finite dimensional manifold).
\end{proof}

\begin{corollary} The $\cH$-module $\tilde{\cH} ^{st}$ can be considered as a cyclic locally finite $\hat{\cH}$-module via the equivalence
$\cH\-\Mod_{lf}\simeq \hat{\cH}\-\Mod_{lf}$
from Corollary \ref{summary of equivalences}. In particular, the surjection of $\cH$-modules $j^*:\cH \to \tilde{\cH} ^{st}$ induces the surjection of $\hat{\cH}$-modules
$$j^*:\hat{\cH}\to \tilde{\cH}^{st}.$$
\end{corollary}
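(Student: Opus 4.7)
The plan is to deduce this Corollary immediately from the preceding Proposition together with the equivalence of categories stated in Corollary \ref{summary of equivalences}. By Proposition \ref{prop that geom mod is locally finite}, $\tilde{\cH}^{st}$ belongs to $\cH\text{-}\Mod_{lf}$ and is cyclic, witnessed by the surjection $j^*:\cH\to \tilde{\cH}^{st}$ whose kernel $I:=\ker(j^*)$ is a large graded ideal of $\cH$. Applying the equivalence $\cH\text{-}\Mod_{lf}\simeq \hat{\cH}\text{-}\Mod_{lf}$ of Corollary \ref{summary of equivalences} immediately transfers this to a cyclic locally finite $\hat{\cH}$-module structure on $\tilde{\cH}^{st}$.

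To make the surjection $\hat{\cH}\to \tilde{\cH}^{st}$ explicit, I would unwind the essential surjectivity part of the proof of Lemma \ref{first lemma on equiv}(1). There one takes the large $\cH$-ideal $I$ and forms $\hat{I}=\oplus_\gamma \hat{I}_\gamma$, where $\hat{I}_\gamma$ is the adic completion of $I_\gamma$ inside $\hat{\cH}_\gamma$. This is a large graded ideal of $\hat{\cH}$ satisfying $\hat{I}\cap \cH = I$, and the natural map $\cH/I\to \hat{\cH}/\hat{I}$ is an isomorphism. Thus composing this isomorphism (inverted) with the quotient $\hat{\cH}\twoheadrightarrow \hat{\cH}/\hat{I}$ and the identification $\cH/I\simeq \tilde{\cH}^{st}$ furnished by $j^*$ yields a surjection $\hat{\cH}\to \tilde{\cH}^{st}$ extending $j^*$.

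Finally, I would briefly check that the $\hat{\cH}$-action constructed via this surjection agrees with the one obtained from the abstract equivalence. This is automatic because the equivalence in Lemma \ref{first lemma on equiv}(1) is implemented by restriction of scalars along the inclusion $\cH\subset \hat{\cH}$; in particular, for a cyclic module presented as $\hat{\cH}/\hat{I}$ with $I=\hat{I}\cap\cH$ large, the $\cH$-module $\cH/I$ recovers the original $\hat{\cH}/\hat{I}$, and the generating element corresponds under the canonical map. The only real content is the density/continuity argument of Lemma \ref{first lemma on equiv}(1), which is already established, so there is no substantive obstacle here; the Corollary is essentially a repackaging.
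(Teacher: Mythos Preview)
Your proposal is correct and follows exactly the approach the paper intends: the paper gives no explicit proof for this Corollary, treating it as an immediate consequence of Proposition~\ref{prop that geom mod is locally finite} together with the equivalence in Lemma~\ref{first lemma on equiv}(1) (restated in Corollary~\ref{summary of equivalences}). Your unpacking of the essential surjectivity step of Lemma~\ref{first lemma on equiv}(1) to exhibit the extended surjection $\hat{\cH}\twoheadrightarrow \tilde{\cH}^{st}$ is precisely the content implicit in the paper's ``In particular'' clause.
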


\subsection{} Similarly, the framed moduli spaces $\{\cM _\gamma\}$ give rise to a graded $\cR$-module. Namely, the equivariant vector bundle $\tau :\tilde{M}_\gamma \to M_\gamma$ induces an isomorphism
$$\cR _\gamma =K_0^{G_\gamma}(M_\gamma )\stackrel{\tau ^*}{\to} K_0^{G_\gamma}(\tilde{M}_\gamma)$$
and diagrams of stacks \eqref{ext of module}, \eqref{ext of module to stable} give rise to a graded $\cR$-module structure on the vector space
$$\cR ^{st}:=\bigoplus _{\gamma \in I} K_0^{G_\gamma}(\tilde{M}^{st}_\gamma)$$
together with a morphism $j^*:\cR \to \cR ^{st}$ of graded $\cR$-modules, coming from the open embedding $j:\tilde{M}^{st}_\gamma \hookrightarrow
\tilde{M}_\gamma$.

\begin{proposition}\label{prop:Rst_locally_finite} (1) The homomorphism of graded $\cR _\bbQ$-modules $j^*:\cR _\bbQ\to \cR ^{st}_\bbQ$ is surjective and its kernel is a large ideal. In particular, $\cR ^{st}_\bbQ $ is a cyclic locally finite $\cR _\bbQ$-module.

(2) Assume that the quiver $Q$ is symmetric. The locally finite $\cH$- and $\cR _\bbQ$-modules $\cH ^{st}$ and $\cR ^{st}_\bbQ $
correspond to each other under the equivalence of categories
$\cH\-\Mod_{lf}\simeq \cR_{\bbQ}\-\Mod_{lf}$
from Corollary \ref{summary of equivalences}.
\end{proposition}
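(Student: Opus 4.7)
The two parts require different ingredients, but part~(1) is closely parallel to Proposition \ref{prop that geom mod is locally finite}, with equivariant $K$-theory replacing equivariant cohomology and Chow groups.

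For part~(1), I would deduce surjectivity of $j^*:\cR_\gamma\to \cR^{st}_\gamma$ from two facts: (i) the homotopy-invariance isomorphism $\tau^*:\cR_\gamma=K_0^{G_\gamma}(M_\gamma)\xrightarrow{\sim}K_0^{G_\gamma}(\tilde{M}_\gamma)$, since $\tau$ is a $G_\gamma$-equivariant vector bundle; and (ii) the surjectivity of $j^*:K_0^{G_\gamma}(\tilde{M}_\gamma)\twoheadrightarrow K_0^{G_\gamma}(\tilde{M}^{st}_\gamma)$, which is the right-exactness of the Quillen localization sequence applied to the open embedding $j:\tilde{M}^{st}_\gamma\hookrightarrow \tilde{M}_\gamma$. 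For largeness of $\ker j^*$, the principal $G_\gamma$-bundle $\tilde{M}^{st}_\gamma\to \cM_\gamma$ yields $\cR^{st}_\gamma\cong K_0(\cM_\gamma)$; by the cell decomposition of $\cM_\gamma$ from \cite{EnRei}, the Chern character identifies $K_0(\cM_\gamma)_\bbQ$ with the finite-dimensional ring $A^*(\cM_\gamma)_\bbQ$, whose augmentation ideal is nilpotent of index at most $\dim_\bbC\cM_\gamma+1$. Since each element $z_{i,\alpha}-1\in\cR_\gamma$ has trivial augmentation, any product of $n_\gamma$ such factors acts trivially on the cyclic generator for $n_\gamma$ sufficiently large, so $\cR^{\geq n_\gamma}_{\gamma}\otimes\bbQ\subset \ker j^*$.

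For part~(2), my plan is to exhibit a natural isomorphism of graded $\cR_\bbQ$-modules from $\cR^{st}_\bbQ$ to $\tilde{\cH}^{st}$, where the latter carries the $\cR_\bbQ$-action transported from Corollary \ref{summary of equivalences}. As a graded vector space the component-wise Chern character provides an isomorphism
\[
ch:\cR^{st}_\bbQ=\bigoplus_\gamma K_0(\cM_\gamma)_\bbQ\xrightarrow{\sim}\bigoplus_\gamma H(\cM_\gamma,\bbQ)=\tilde{\cH}^{st},
\]
because each $\cM_\gamma$ is cellular. Both modules are cyclic on the degree-zero generator, and $ch$ sends the $K$-theoretic cyclic generator $1\in K_0(\cM_0)_\bbQ$ to the cohomological one; moreover by part~(1) and Proposition \ref{prop that geom mod is locally finite} the two surjections $\cR_\bbQ\twoheadrightarrow \cR^{st}_\bbQ$ and $\cH\twoheadrightarrow \tilde{\cH}^{st}$ are compatible with $ch$. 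It therefore suffices to check that $ch$ intertwines the two module actions.

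Both actions are defined by the pull-push formalism applied to the diagram \eqref{ext of module to stable}, in $K$-theory and in cohomology respectively, so the comparison is a Grothendieck-Riemann-Roch computation. For the closed embedding $i$ the Todd correction is carried by the normal bundle of $M_{\gamma_1,\gamma_2}\hookrightarrow M_\gamma$, whose $G_{\gamma_1,\gamma_2}$-equivariant Chern roots are precisely the differences $x''_{j,\alpha_2}-x'_{i,\alpha_1}$ with multiplicities $a_{ij}$; for the Grassmannian fiber bundle $\pi$ one picks up the complementary factor coming from the relative tangent bundle. Matching these against the explicit action formula in Example \ref{explicit module structure} and the definition of ${}^{\tilde{\sigma}}h$, the corrections assemble into exactly the factors $\tilde{\eta}_\gamma$ and $\tilde{b}_\tau^\gamma$. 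This RR bookkeeping is the main obstacle and is essentially the same computation as in the proof of Theorem \ref{main thm}, now performed at the level of the module rather than the algebra; the sign twist $\mu(\tau,\gamma)$ and the exponential factors $a_\tau^\gamma$ arise on lexicographically ordering the Chern roots, exactly as in Lemma \ref{lem:manipul'}.
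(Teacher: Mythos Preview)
Your argument for part~(1) is correct but differs from the paper's. You obtain surjectivity directly from the Quillen localization sequence, whereas the paper bootstraps from the cohomological case: it forms the commutative square
\[
\begin{array}{ccc}
\cR_\bbQ & \xrightarrow{\ ch\ } & \hat{\cH}\\
{\scriptstyle j^*}\downarrow & & \downarrow{\scriptstyle j^*}\\
\cR^{st}_\bbQ & \xrightarrow{\ ch\ } & \cH^{st}
\end{array}
\]
and notes that the bottom arrow is an isomorphism (by the cell decomposition of $\cM_\gamma$), so surjectivity on the right (already proved in Proposition~\ref{prop that geom mod is locally finite}) gives surjectivity on the left. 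Largeness of the kernel then comes from the single observation $ch^{-1}(\hat{\cH}_\gamma^{\ge n})=\cR_\gamma^{\ge n}$. Your route via localization and nilpotence of the augmentation ideal also works and is arguably more self-contained.

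For part~(2) your Riemann--Roch plan is in principle workable but is much heavier than what the paper does, and one step in your outline is slightly off. You assert that the bare Chern character $ch$ is the $\cR_\bbQ$-module isomorphism; the paper shows that the correct isomorphism is $m\mapsto \tilde\eta_\gamma\,ch(m)$, i.e.\ the map obtained by descending the algebra homomorphism ${}^{\tilde\sigma}h$ itself. The paper bypasses GRR entirely with the following observation: for each fixed $\gamma$ the restriction $j^*_\gamma:\hat{\cH}_\gamma\to\cH^{st}_\gamma$ is a \emph{ring} homomorphism for the ordinary (cup-product) ring structures on source and target. Since $\tilde\eta_\gamma\in\hat{\cH}_\gamma$ is a unit (its constant term is $1$), multiplication by $\tilde\eta_\gamma$ preserves $\ker j^*_\gamma$. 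Hence the algebra map ${}^{\tilde\sigma}h:\cR_\bbQ\to{}^{\tilde\sigma}\hat{\cH}$, $f\mapsto \tilde\eta_\gamma\,ch(f)$, descends modulo the two large kernels to an $\cR_\bbQ$-module isomorphism $\cR^{st}_\bbQ\to\cH^{st}$. No Todd classes or pushforward comparisons are needed; the whole RR bookkeeping you describe has effectively already been absorbed into the proof of Theorem~\ref{main thm}, and part~(2) is then a one-line consequence of ``$j^*_\gamma$ is a ring map.''
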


\begin{proof} (1) The (linear) Chern character map $ch:\cR \to \hat{\cH}$ descends to the map $ch:\cR ^{st}\to \cH ^{st}$ in the commutative diagram
$$\begin{array}{ccc}
\cR  & \stackrel{ch}{\to} & \hat{\cH}\\
j^* \downarrow & & \downarrow j^*\\
\cR ^{st} & \stackrel{ch}{\to} & \cH ^{st}
\end{array}
$$

As in the proof of Proposition \ref{prop that geom mod is locally finite}
consider the isomorphisms $\cH ^{st}_\gamma \simeq H(\cM _\gamma ,\bbQ)$ and $\cR ^{st}_\gamma \simeq K^0(\cM _\gamma)$. The map
$$ch:K^0(\cM _\gamma)_\bbQ\to \cH ^{st}_\gamma$$
is an isomorphism, because the space $\cM _\gamma$ has a cell decomposition. Therefore, the map $ch:\cR ^{st}_\bbQ \to \cH ^{st}$
is also an isomorphism. It follows that $j^*:\cR _\bbQ \to \cR ^{st}_\bbQ$ is a surjection of $\cR _\bbQ$-modules. Moreover, for every $\gamma$ and $n$ we have $ch ^{-1}(\hat{\cH}_\gamma ^{\geq n})=\cR _\gamma ^{\geq n}$.
This implies that the kernel of the map $j^*:\cR _\bbQ \to \cR ^{st}_\bbQ$
is a large ideal, i.e. $\cR ^{st}_{\bbQ}$ is a cyclic locally finite $\cR _\bbQ$-module.

(2) Since the map $ch: \cR _\bbQ \to \hat{\cH}$ is not a homomorphism of algebras, the induced map $ch :\cR ^{st}_\bbQ \to \cH ^{st}$ is not a morphism of $\cR_\bbQ$-modules. But we may consider the map $j^*:\hat{\cH}={}^{\tilde{\sigma}}\hat{\cH}\to \cH ^{st}$ as a surjection of ${}^{\tilde{\sigma}}\hat{\cH}$-modules (Proposition \ref{equiv of cat by twists}). Recall that the map
$${}^{\tilde{\sigma}}h:\cR \to {}^{\tilde{\sigma}}\hat{\cH},\quad f\mapsto \eta _\gamma ch(f),\quad \text{for $f\in \cR _\gamma$}$$
is a homomorphism of algebras (see the end of the proof of Theorem \ref{main thm}). Now the key point is the following: for each $\gamma$ the inverse image map $j^*_\gamma:\hat{\cH}_\gamma \to \cH ^{st}_\gamma$ is a ring homomorphism with respect to the natural ring structures on $\hat{\cH}_\gamma $ and $\cH ^{st}_\gamma$. Therefore,
$\eta _\gamma (\ker j^*_\gamma) =\ker j^*_\gamma$.
It follows that the algebra homomorphism ${}^{\tilde{\sigma}}h:\cR \to {}^{\tilde{\sigma}}\hat{\cH}$ descends to the isomorphism of $\cR_\bbQ$-modules
$$\cR ^{st}_\bbQ \to \cH ^{st},\quad m\mapsto \eta _\gamma ch(m),\quad\text{for $m\in \cR _{\gamma ,\bbQ}$}.$$
This completes the proof of the proposition.
\end{proof}

\begin{comment}
[

\end{comment}

%\bibliography{nccr}
%\bibliographystyle{plain}
%\bibliographystyle{amsalpha}

\end{document}